
\documentclass[10pt,a4paper]{article}
\usepackage{a4wide}
\usepackage{times,xcolor}
\linespread{1}

\usepackage{hyperref}
\usepackage{mathrsfs} 
\usepackage{amssymb} 
\usepackage{amsmath} 
\usepackage{amsthm} 
\usepackage{graphicx} 
\usepackage{paralist}

\newtheorem{theorem}{Theorem}[section]
\newtheorem{proposition}[theorem]{Proposition}
\newtheorem{lemma}[theorem]{Lemma}

\newcommand{\G}[2]{G_{#1,#2}}
\newcommand{\Exp}{\,\mathbb{E}}
\renewcommand{\Pr}{\,\mathbb{P}}
\newcommand{\eps}{\varepsilon}
\newcommand{\given}{ \; \big| \; }

\newcommand{\alphac}[1]{\alpha_c^{#1}}
\newcommand{\chic}[1]{\chi_c^{#1}}

\DeclareMathOperator{\Bin}{Bin}

\title{Bounded monochromatic components for random graphs}

\author{
Nicolas Broutin \\
Inria
\and
Ross J. Kang\thanks{This work was initiated while this author was at McGill University and supported by a NSERC Postdoctoral Fellowship. This author's research was also supported by a NWO Veni Grant.}\\
Radboud University Nijmegen
}

\begin{document}

\maketitle

\begin{abstract}
We consider vertex partitions of the binomial random graph $G_{n,p}$. For $np\to\infty$, we observe the following phenomenon: in any partition into asymptotically fewer than $\chi(G_{n,p})$ parts, i.e.~$o(np/\log np)$ parts, one part must induce a connected component of order at least roughly the average part size.

Stated another way, we consider the {\it $t$-component chromatic number}, the smallest number of colours needed in a colouring of the vertices for which no monochromatic component has more than $t$ vertices. As long as $np \to \infty$, there is a threshold for $t$ around $\Theta(p^{-1}\log np)$: if $t$ is smaller then the $t$-component chromatic number is nearly as large as the chromatic number, while if $t$ is greater then it is around $n/t$.

For $0 < p <1$ fixed, we obtain more precise information. We find something more subtle happens at the threshold $t = \Theta(\log n)$, and we determine that the asymptotic first-order behaviour is characterised by a non-smooth function. Moreover, we consider the {\it $t$-component stability number}, the maximum order of a vertex subset that induces a subgraph with maximum component order at most $t$, and show that it is concentrated in a constant length interval about an explicitly given formula, so long as $t = O(\log \log n)$.

We also consider a related Ramsey-type parameter and use bounds on the component stability number of $G_{n,1/2}$ to describe its basic asymptotic growth.

\medskip
Keywords: graph colouring, random graphs, component colouring, component stability

MSC: 05C80, 05C15, 05A16
\end{abstract}

\section{Introduction}\label{sec:intro}

For $t$ a positive integer, the {\it $t$-component stability number} $\alphac{t}(G)$ of a graph $G$ is the maximum order of a {\it $t$-component set} --- a vertex subset that induces a subgraph with maximum component order at most $t$. The {\it $t$-component chromatic number} $\chic{t}(G)$ is the smallest number of colours needed in a {\it $t$-component colouring} --- a colouring of the vertices such that colour classes are $t$-component sets.  Note that $\chic{t}(G) \ge |V(G)|/\alphac{t}(G)$ for any graph $G$ and any positive integer $t$.

We study the $t$-component chromatic and stability numbers of $\G{n}{p}$, where $\G{n}{p}$ as usual denotes the Erd\H os--R\'enyi random graph with vertex set $[n] = \{1, \ldots, n \}$ and edges included independently at random with probability $p$, $0 < p < 1$.
We say that a property $A_n$ of $\G{n}{p}$ holds asymptotically almost surely (a.a.s.)~if $\Pr(A_n) \to 1$ as $n \to \infty$.
We use standard notational conventions: $q = 1 - p$ and $b = 1/q$.  
Unless specified otherwise, the base of logarithms is natural.

If $t = 1$, then $\chic{t}(\G{n}{p})$ coincides with the notion of the chromatic number $\chi(\G{n}{p})$ of $\G{n}{p}$, a parameter of intensive study in random graph theory. 
 For fixed $0 < p < 1$, Grimmett and McDiarmid~\cite{GrMc75} conjectured that $\chi(\G{n}{p}) \sim n/(2 \log_b n)$ a.a.s.  This remained a major open problem in random graph theory for over a decade, until Bollob\'as~\cite{Bol88} used martingale techniques to establish the conjecture; earlier, Matula~\cite{Mat87} had devised an independent method that was later proved to also confirm the conjecture~\cite{MaKu90}. {\L}uczak~\cite{Luc91a} used martingale concentration to extend Matula's method to sparse random graphs and showed that, for any fixed $\eps > 0$, there exists $d_0$ such that
\[\frac{(1 - \eps)np}{2 \log np} \le \chi(\G{n}{p}) \le \frac{(1 + \eps)np}{2 \log np}\] 
a.a.s.~if $np \ge d_0$. 
This reviews classic work in the area, but there has been tremendous further activity from many perspectives, cf.~e.g.~\cite{Coj13,CoVi13}; for further background on colouring random graphs,
see~\cite{Bol01,JLR00,KaMc15}. 

We begin with some basic observations about the $t$-component chromatic number.  Let $G$ be a graph and $t$ a positive integer.  Since a $t$-component set is a $(t + 1)$-component set, it follows that $\chic{t}(G) \ge \chic{t+1}(G)$. 
Also, each colour class of a $t$-component colouring can be properly coloured with at most $t$ colours, and it follows that $\chic{t}(G) \ge \chi(G)/t$.  
Moreover, any partition of the vertex set into $t$-sets is a $t$-component colouring.  We thus have the following range of values for $\chic{t}(G)$.
\begin{proposition}\label{prop:basic}
For any graph $G$ and positive integer $t$,
\[ \frac{\chi(G)}{t} \le \chic{t}(G) \le \min\left\{ \left\lceil \frac{|V(G)|}{t} \right\rceil, \chi(G) \right\}.\]
\end{proposition}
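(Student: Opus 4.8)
The plan is to verify the three inequalities separately, each essentially by exhibiting or manipulating a colouring; all three ingredients are already foreshadowed in the discussion preceding the statement, so the proof is short.

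For the lower bound $\chi(G)/t \le \chic{t}(G)$, I would start from an optimal $t$-component colouring of $G$ with colour classes $V_1, \dots, V_k$, where $k = \chic{t}(G)$. Each $G[V_i]$ has all of its components of order at most $t$, and a graph on at most $t$ vertices is (trivially) properly $t$-colourable; colouring each component of $G[V_i]$ with colours from $\{1, \dots, t\}$ and doing so independently over all components yields a proper $t$-colouring of $G[V_i]$. Using a disjoint palette of size $t$ for each of the $k$ classes then gives a proper colouring of all of $G$ with at most $tk$ colours, whence $\chi(G) \le t\, \chic{t}(G)$.

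For the upper bound I would treat the two terms of the minimum independently. First, partition $V(G)$ arbitrarily into $\lceil |V(G)|/t \rceil$ parts each of size at most $t$; then each part induces a subgraph on at most $t$ vertices, so every component certainly has order at most $t$, and the partition is a $t$-component colouring, giving $\chic{t}(G) \le \lceil |V(G)|/t \rceil$. Second, take an optimal proper colouring of $G$ with $\chi(G)$ colours; every colour class is independent, so in each induced subgraph the components are isolated vertices of order $1 \le t$, and this too is a $t$-component colouring, giving $\chic{t}(G) \le \chi(G)$. Combining the two bounds yields the asserted minimum. There is no genuine obstacle here: the only point needing a (one-line) argument rather than being immediate is that a component of order at most $t$ is properly $t$-colourable, which holds simply because it has at most $t$ vertices; the rest is bookkeeping with disjoint palettes.
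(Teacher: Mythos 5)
Your proof is correct and follows essentially the same route as the paper: properly $t$-colour each class of an optimal $t$-component colouring (using that components have at most $t$ vertices) to get $\chi(G) \le t\,\chic{t}(G)$, and observe that both an arbitrary partition into parts of size at most $t$ and an optimal proper colouring are $t$-component colourings. The paper states these three observations in the paragraph preceding the proposition with no further detail, so your write-up is just a slightly more explicit version of the same argument.
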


Roughly, we prove that $\chic{t}(\G{n}{p})$ is likely to be close to the upper end of the range implied by Proposition~\ref{prop:basic}: a.a.s.~it is close to $\chi(\G{n}{p})$ if $t(n) = o(\log_b np)$ and to $n/t$ if $t(n) = \omega(\log_b np)$. 
This has a compact qualitative interpretation: in any partition of the vertices of $\G{n}{p}$ into asymptotically fewer than $\chi(\G{n}{p})$ parts, one part must induce a subgraph having a large sub-component, about as large as the average part size.
This statement, made more precise in Theorem~\ref{thm:chi} below, concerns $\G{n}{p}$ with $np\to \infty$ as $n\to\infty$. For most of the paper however, we focus on the dense case, i.e.~with $p$ fixed between $0$ and $1$.

An interesting question is how to characterise $\chic{t}(\G{n}{p})$ at the threshold $t=\Theta(\log n)$. At this point, the two trivial upper bounds in Proposition~\ref{prop:basic} are of the same asymptotic order, and we see that something more subtle takes place.
Our main result is an explicit determination of $\chic{t}(\G{n}{p})$ assuming that $t/\log n$ is convergent as $n\to\infty$.
We find it convenient to set some notation:
given $\tau,\kappa>0$, define
\begin{align}
\iota(\tau,\kappa) =
\frac12\left(\left(\kappa-\tau\left\lfloor\frac{\kappa}{\tau}\right\rfloor\right) \left(\kappa-\tau\left\lfloor \frac{\kappa}{\tau}\right\rfloor-\tau\right) - \kappa(\kappa-\tau-2)\right).
\label{eqn:iota}
\end{align}

The following technical lemma is crucial; its proof can be found in the appendix. See also Figure~\ref{fig:chimediumplot}.

\begin{lemma}\label{lem:iota}
Let $\kappa=\kappa(\tau)$ be defined by the implicit equation $\iota(\tau,\kappa)=0$, for $\iota$ as defined in~\eqref{eqn:iota}. Then $\kappa : (0,\infty) \to \mathbb R$ is a well-defined function with the following properties.
\begin{enumerate}
\itemsep0pt
	\item\label{itm:iota,c} Over all of $(0,\infty)$, the function $\kappa$ is positive, increasing, piecewise convex, and continuous.
	\item\label{itm:iota,d} If $\tau \in (0,2]$, then $\kappa$ is close to $\tau+2$, with equality for $\tau=2/i$, $i\in \mathbb N$;
	otherwise $\kappa = \tau + \tau/(\tau-1)$.
	Moreover, $\tau+1< \kappa \le \tau +2$ always.
\end{enumerate}
\end{lemma}

We may now state our main result.

\begin{theorem}\label{thm:chi,medium}
Fix $0 < p < 1$.  Suppose $t = t(n) \sim \tau \log_b n$ as $n \to \infty$ for some $\tau > 0$ and let $\kappa=\kappa(\tau)$ be the unique positive real guaranteed by Lemma~\ref{lem:iota}.
Then a.a.s.
\begin{align*}
\chic{t}(\G{n}{p}) \sim \frac{n}{\kappa \log_b n}.
\end{align*}
\end{theorem}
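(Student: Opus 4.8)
The plan is to prove Theorem~\ref{thm:chi,medium} by establishing matching upper and lower bounds on $\chic{t}(\G{n}{p})$, both of which come down to understanding the $t$-component stability number $\alphac{t}(\G{n}{p})$ in the regime $t \sim \tau\log_b n$. Since $\chic{t}(G) \ge |V(G)|/\alphac{t}(G)$, the first step is a first-moment (union bound) argument: a vertex subset $S$ of size $s$ is a $t$-component set only if the induced subgraph $\G{n}{p}[S]$ has every component of order at most $t$, which in particular forces $S$ to be partitioned into blocks of size at most $t$ with no edges between blocks. The dominant contribution to the expected number of such configurations should come from balancing the entropy of choosing $S$ and the partition against the cost $q$ raised to the number of ``forbidden'' non-edges; after optimizing over the block-size profile one finds that the exponent vanishes precisely when $s \sim \kappa\log_b n$, where the break-even constant $\kappa$ is governed by the function $\iota(\tau,\kappa)$ in~\eqref{eqn:iota}. (The appearance of the floor $\lfloor\kappa/\tau\rfloor$ reflects that the optimal partition uses as many full blocks of size $t$ as possible, plus one leftover block; the quadratic terms count non-edges within versus across blocks.) This yields $\alphac{t}(\G{n}{p}) \le (1+o(1))\kappa\log_b n$ a.a.s., hence $\chic{t}(\G{n}{p}) \ge (1-o(1))n/(\kappa\log_b n)$.

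**The upper bound on the chromatic number**

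For the upper bound on $\chic{t}(\G{n}{p})$ I would use a greedy/peeling argument: repeatedly extract a $t$-component set of size close to $\kappa\log_b n$, colour it, delete it, and repeat. To make this work one needs a \emph{lower} bound on $\alphac{t}$ that holds robustly, i.e.\ not just for $\G{n}{p}$ but for $\G{m}{p}$ for all $m$ down to some $m_0 = n/\mathrm{polylog}(n)$, so that the union bound over the (few) rounds still succeeds; the last $o(n/\log n)$ vertices can be absorbed into $o(n/\log n)$ extra colour classes, which is negligible. The existence of a $t$-component set of size $(1-o(1))\kappa\log_b n$ is the harder direction of the second-moment method: one computes the second moment of the number of $t$-component sets of the target size realized with a specified near-optimal block structure, and shows the variance is controlled. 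As is standard for independent-set-type problems in $\G{n}{p}$, the raw second moment fails because of the Janson/clustering phenomenon near the threshold, so I expect to need a more delicate argument — either a careful second-moment computation on a truncated/typical family of configurations, or an appeal to a concentration inequality (Talagrand or a bounded-differences martingale on the edges) combined with a weak first-moment lower bound, mirroring how concentration of $\alpha(\G{n}{p})$ and $\chi(\G{n}{p})$ is handled classically.

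**Identifying the main obstacle**

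The main obstacle, I expect, is the variational/combinatorial optimization underlying both bounds: correctly identifying that the extremal $t$-component set is built from $\lfloor\kappa/\tau\rfloor$ blocks of size exactly $t$ together with one smaller leftover block, proving this profile is optimal among all block-size profiles (a discrete optimization that produces the piecewise-quadratic $\iota$ and the floor function), and verifying that $\iota(\tau,\cdot)$ has a unique positive root $\kappa$ so the statement is well-posed. Once the right value of $\kappa$ is pinned down, the first-moment upper bound on $\alphac{t}$ is routine, and the second-moment (or concentration) lower bound, while technically involved, follows the well-trodden path for $\alpha(\G{n}{p})$; the colouring upper bound is then a short peeling argument. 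A secondary technical point is handling the regime where $\kappa/\tau$ is exactly an integer, so that the leftover block is empty and $\iota$ changes form — this should be a boundary case that can be checked separately or shown to be continuous in $\tau$.
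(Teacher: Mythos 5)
Your overall skeleton coincides with the paper's: a first-moment bound at $k\sim(\kappa+\eps)\log_b n$ gives $\alphac{t}(\G{n}{p})\le(\kappa+\eps)\log_b n$ a.a.s.\ (hence the lower bound on $\chic{t}$), and the upper bound comes from a lower-tail estimate for $\alphac{t}$ just below the threshold that is strong enough to union-bound over all $2^n$ vertex subsets of size at least $n/(\log n)^2$, followed by greedy peeling; your heuristic for the extremal profile (as many blocks of size $t$ as possible plus one leftover, producing $\iota$ and the floor $\lfloor\kappa/\tau\rfloor$) is also the paper's. The genuine gap is in the step you defer: the assertion that the second-moment part ``follows the well-trodden path for $\alpha(\G{n}{p})$'' is precisely where the work lies, and as stated it would not go through. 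For overlapping $k$-sets $A,B$, the events ``$A\in\mathcal{S}_{n,t,k}$'' and ``$B\in\mathcal{S}_{n,t,k}$'' are not plain independence events: conditioned on $B\in\mathcal{S}_{n,t,k}$, the overlap $A\cap B$ may carry many edges, and one must extract enough \emph{forced} non-edges inside $A$ from the component-order constraint to control the correlation term $\Delta$ in Janson's inequality. The paper's Lemma~\ref{lem:chi,medium} does this by splitting $\ell=|A\cap B|$ into three ranges with a different structural argument in each: comparison with the event $E[A\cap B]=\emptyset$ for small $\ell$; a degree argument (every vertex of $A\setminus B$ has fewer than $t$ neighbours in $A$) for large $\ell$; and, for intermediate $\ell$, a bipartition of $A$ extending one of $A\setminus B$, leading to a two-parameter quadratic optimization that is only viable because one first checks $\kappa>2\tau$ for $\tau<2$, so the middle range can be assumed to occur only when $\tau\ge2$. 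None of this appears in your proposal, and it is not a transcription of the independent-set computation.

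Your fallback route (Talagrand or a bounded-differences martingale combined with a weak first-moment lower bound) also does not suffice as described: to survive the union bound over $2^n$ subsets one needs a failure probability like $\exp(-n^{2-o(1)})$ (the paper obtains $\exp(-n^2/(\log n)^5)$), whereas concentration of $\alphac{t}$ about its mean on $m$-vertex subsets via edge or vertex exposure gives at best $\exp(-O(m))$. The classical Bollob\'as-style repair introduces an auxiliary variable (a maximal family of nearly disjoint witness sets) whose expectation must itself be lower-bounded by a second-moment computation of the same difficulty, so the overlap analysis cannot be bypassed. One reassurance on a point you flagged: in this regime $t=\Theta(\log n)$ the number of set partitions of $[k]$ is only $\exp(O(\log n\log\log n))=\exp(o((\log n)^2))$, so the first-moment upper bound really is routine here; the delicate saddle-point partition counts are needed only for the small-$t$ result, Theorem~\ref{thm:alpha}.
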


Lemma~\ref{lem:iota} implies that $\kappa\to2$ as $\tau\downarrow 0$, and so we may view Theorem~\ref{thm:chi,medium} as a non-trivial extension of the aforementioned result of Bollob\'as~\cite{Bol88} on the chromatic number.
We shall see in Section~\ref{sec:exp} that the expected number of $t$-component $(\kappa\log_b n)$-sets is dominated by those with 
nearly all components of the maximum order $t$.
It is thus the remainder term, $\kappa\log_b n-t\lfloor (\kappa\log_b n)/t\rfloor$, that explains the non-smooth behaviour of $\kappa$ as a function of $\tau$.
Theorem~\ref{thm:chi,medium} follows from a first moment method, using a general asymptotic count of set partitions and an optimisation of the non-edge count, together with an involved second moment argument. 

\begin{figure}[ht]
\begin{center}
\includegraphics[height=7.4213333cm]{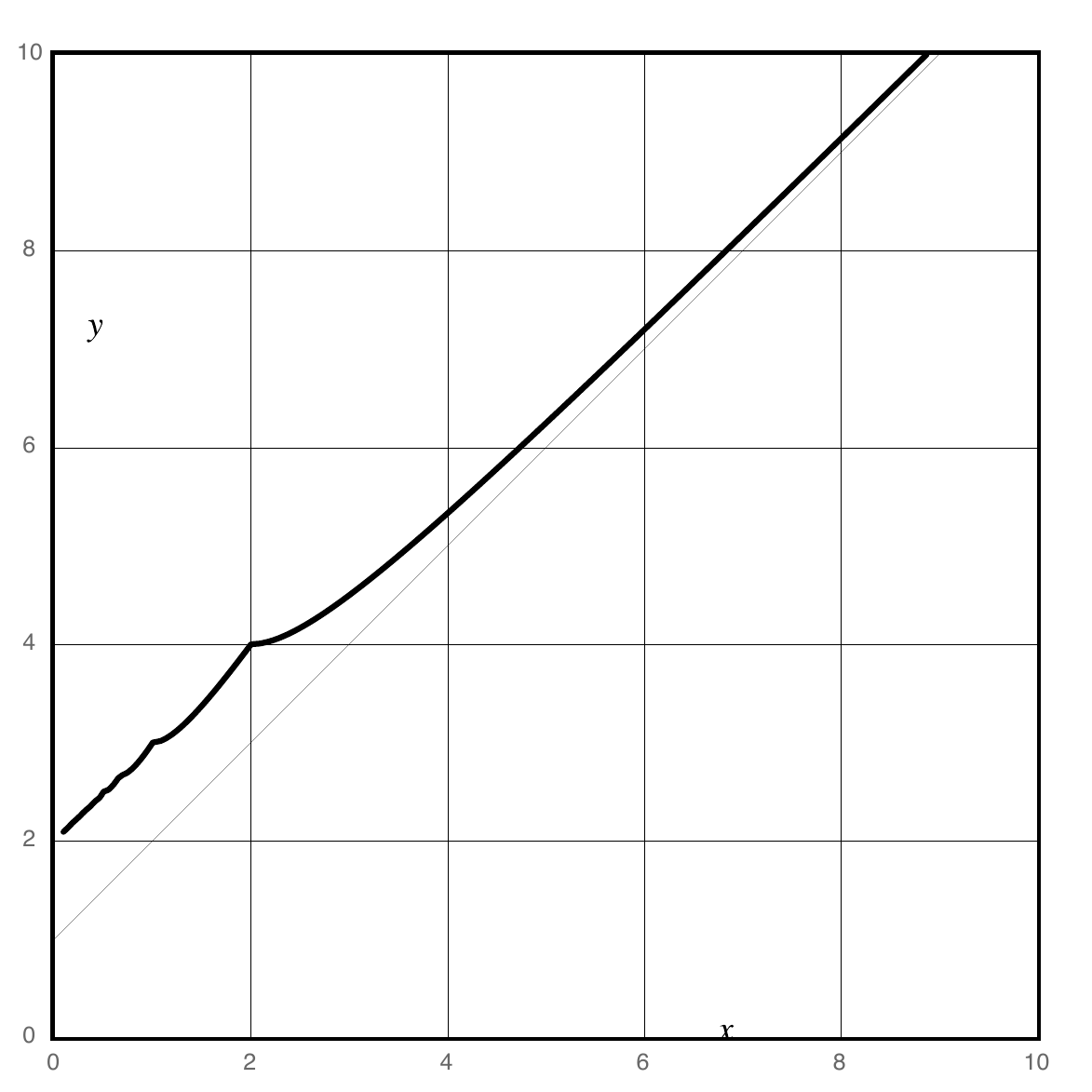} 
\includegraphics[height=7.4213333cm]{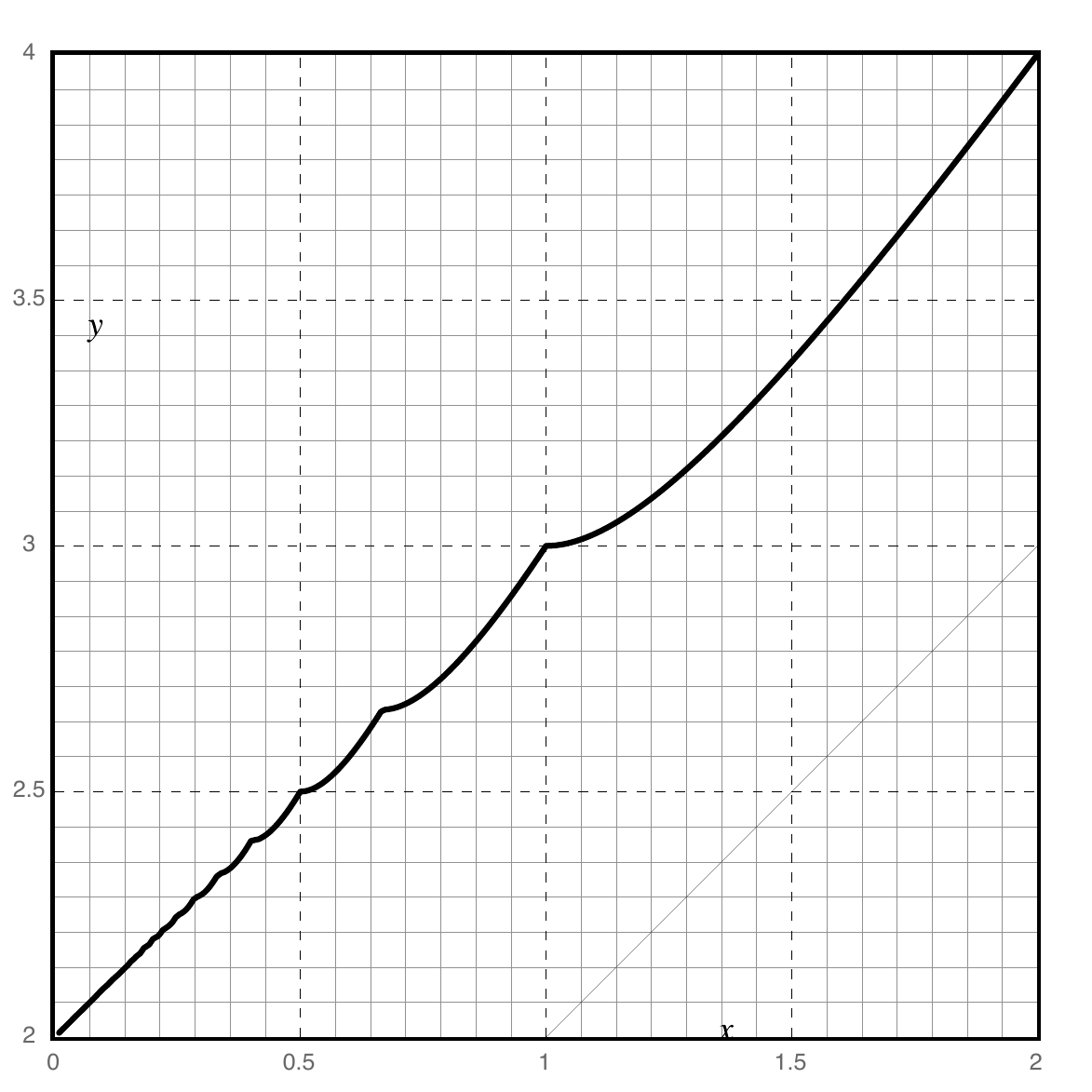} 
\end{center}
\caption{Plots of $\iota(x,y) = 0$, a function determining the behaviour in Theorem~\ref{thm:chi,medium}, and $y=x+1$.\label{fig:chimediumplot}}
\end{figure}

We also obtain an explicit, precise formulation for $\alphac{t}(\G{n}{p})$ when $t$ is bounded 
above by a slowly growing function of $n$. 
The formula in Theorem~\ref{thm:alpha} can be viewed as extending (up to the $\Theta(1)$ 
additive error term) the explicit formulation of the stability number $\alpha(\G{n}{p})$ 
of $\G{n}{p}$ obtained by Matula~\cite{Mat70, Mat72} (cf.~also Bollob\'as and 
Erd\H{o}s~\cite{BoEr76}).

\begin{theorem}\label{thm:alpha}
Fix $0 < p < 1$.  If $t=t(n) \le \log \log_b n$, then a.a.s.
\begin{align*}
\alphac{t}(\G{n}{p})
= 2\log_b n + t - 2\log_b t - \frac{2\log_b \log_b np}t + \Theta(1).
\end{align*}
\end{theorem}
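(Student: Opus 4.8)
The plan is to match a first-moment upper bound to a second-moment lower bound, both built around the observation that a $t$-component set is exactly a vertex set that can be partitioned into connected pieces of order at most $t$ with no edges between pieces; this is the natural extension of Matula's argument for $\alpha(\G{n}{p})$.

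\textbf{Upper bound.} If $N_k$ denotes the number of $t$-component sets of order $k$, then $\Exp N_k=\binom nk\Pr[\G{k}{p}\text{ has maximum component order}\le t]$. If $\G{k}{p}$ has all components of order at most $t$, its component partition is a partition $\mathcal P$ of $[k]$ into parts of size $\le t$ with no edge across parts, so $\Pr[\G{k}{p}\text{ has max component order}\le t]\le\sum_{\mathcal P}q^{\binom k2-\sum_i\binom{|P_i|}2}$, summed over all such $\mathcal P$. I would evaluate this via the exponential formula for set partitions: $\sum_{\mathcal P:\,|P_i|\le t}\prod_i b^{\binom{|P_i|}2}=k!\,[z^k]\exp\bigl(\sum_{s=1}^t b^{\binom s2}z^s/s!\bigr)$, and bound the coefficient by $\exp(h(z^\ast))/(z^\ast)^k$ (legitimate for any $z^\ast>0$ since all coefficients are nonnegative) at the saddle $z^\ast$. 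One checks that the saddle falls in the regime where the top term $s=t$ dominates the sum, so $h(z^\ast)=(1+o(1))k/t$; feeding this back and applying Stirling's formula to $k!$ and $t!$ shows $\Exp N_k\to0$ as soon as $k\ge 2\log_b n+t-2\log_b t-\tfrac2t\log_b\log_b np+C$ for $C$ a sufficiently large constant. The $\log_b\log_b np$-term comes from expanding $\log_b(k/t)$ using $k\sim2\log_b n$ and $\log_b np\sim\log_b n$, and the hypothesis $t\le\log\log_b n$ is precisely what keeps the remaining corrections (the Stirling corrections to $t!$, the terms $\tfrac2t\log_b t$ and $2\log_b e$, and the contribution of partition parts of order less than $t$) within the claimed $\Theta(1)$.

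\textbf{Lower bound.} Put $k=2\log_b n+t-2\log_b t-\tfrac2t\log_b\log_b np-C$, rounded to an integer with $k=mt+r$, and let $X$ count the order-$k$ sets $S$ for which $\G{n}{p}[S]$ has all components of order exactly $t$ (with one component of order $r$). As a graph has a unique component decomposition, $\Exp X=\binom nk\frac{k!}{(t!)^m m!\,r!}\,\gamma_t^{\,m}\gamma_r\,q^{M}$ with $\gamma_s=\Pr[\G{s}{p}\text{ connected}]$ and $M=\binom k2-m\binom t2-\binom r2$; since $\gamma_s\in[\gamma_0,1)$ uniformly for some constant $\gamma_0>0$ (it equals $p$ when $s=2$ and tends to $1$), the factor $\gamma_t^m\gamma_r$ only shifts the threshold by $O(1)$, and $\Exp X\to\infty$ by the choice of $C$. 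For the variance I would condition on $G=\G{n}{p}[S\cap S']$: for $|S\cap S'|=j$ the two structure-events are conditionally independent given $G$, so $\Exp[X^2]=\sum_j\binom nk\binom kj\binom{n-k}{k-j}\,\Exp_G\bigl[\Pr[S\text{ has the structure}\mid G]^2\bigr]$, which is at most $\Exp X\cdot\sum_{\ell}\binom k\ell\binom{n-k}\ell\,p^\ast(\ell)$, where $\ell=k-j$ and $p^\ast(\ell)$ is an explicit upper bound on $\max_G\Pr[S\text{ has the structure}\mid G]$ in terms of $\ell$ (the number of vertices of $S$ outside the intersection) --- the analogue of the factor $q^{\binom k2-\binom{k-\ell}2}$ appearing in the second-moment proof for ordinary independent sets. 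The decisive estimate is that, for $t\le\log\log_b n$, this sum is dominated by its $\ell=k$ term, which is $(1+o(1))\Exp X$; hence $\Exp[X^2]=(1+o(1))(\Exp X)^2$ and a.a.s.\ $X>0$, giving $\alphac t(\G{n}{p})\ge k$.

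\textbf{Main obstacle.} The harder half is the second-moment bound --- in particular pinning down a usable $p^\ast(\ell)$ (for $t=1$ the maximizing intersection subgraph is the empty graph and nothing is lost, but for $t\ge2$ one must argue that no intersection subgraph does much better than a suitable ``half-built'' one, which involves restricted counts of partitions into order-$t$ parts) and then showing that the cross-pair terms $1\le\ell\le k-1$ sum to $o(\Exp X)$. This is where the restriction $t\le\log\log_b n$ is genuinely used: for faster-growing $t$ neither the connectivity factor $\gamma_t^{\ell/t}$ nor the partition counts allow the $\ell=k$ term to swamp the rest within an $O(1)$ error. The first-moment side is comparatively mechanical once the exponential-formula/saddle-point evaluation is in place; one also has to dispose of the degenerate cases $r\in\{0,1\}$ and, since $t=t(n)$ may grow, use Stirling's formula for $t!$ in quantitative form.
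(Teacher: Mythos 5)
Your first-moment upper bound is essentially the paper's: the paper also writes a $t$-component $k$-set as a bounded-block-size set partition with no cross edges, counts the partitions by a saddle-point bound on $k!\,[z^k]\exp(\sum_{i\le t}z^i/i!)$ and takes the worst-case (minimum) number of non-edges, whereas you fold the weights $b^{\binom{s}{2}}$ into the generating function; either route gives the stated threshold up to $\Theta(1)$, provided you actually verify the "top term dominates at the saddle'' claim in the range $t\le\log\log_b n$. The genuine gap is in the lower bound. You set up the second moment correctly (conditional independence given the intersection graph), but the decisive estimate --- a usable bound $p^{\ast}(\ell)$ on $\max_G\Pr[\,S\text{ has the structure}\given G\,]$ together with the proof that the overlapping terms sum to $o(\Exp X)$ --- is exactly what you defer as the "main obstacle,'' and without it the proof does not exist. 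Note that the naive correlation bound $\Pr(A\in\mathcal S_{n,t,k}\given B\in\mathcal S_{n,t,k})\le b^{\binom{\ell}{2}}\Pr(A\in\mathcal S_{n,t,k})$ genuinely fails when the overlap $\ell$ is close to $k\approx 2\log_b n$ (the resulting term is of order $k^{2k}/n$, which is enormous), so the large-overlap regime cannot be dispatched by a one-line worst-case-$G$ argument; some new input is needed there, and you have not supplied it.

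For comparison, the paper closes this regime not by restricted partition counts over the intersection but by a relaxation: if $A\in\mathcal S_{n,t,k}$ then every vertex of $A\setminus B$ has degree at most $t$ inside $A$, so $\Pr(A\in\mathcal S_{n,t,k}\given B\in\mathcal S_{n,t,k})\le\Pr\bigl(\Bin(\ell(k-\ell),p)+2\Bin(\binom{k-\ell}{2},p)\le t(k-\ell)\bigr)$, which a tailored Chernoff-type bound (Lemma~\ref{lem:mixedbin}) makes roughly $\bigl((\log n)^{t/2}/n\bigr)^{(1+o(1))(k-\ell)}$; combined with the crude $b^{\binom{\ell}{2}}$ bound for small overlaps and a split of $\Delta$ at $\ell_1=2\log_b n-6\log_b k$, Janson's inequality finishes the proof. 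This is also where the hypothesis $t\le\log\log_b n$ is really spent on the lower-bound side: the large-overlap contribution is only $\exp(O(t(\log\log n)^2))\cdot\Exp(|\mathcal S_{n,t,k}|)$, which is negligible precisely because $t(\log\log n)^2=o(\log n)$. So your diagnosis of where the difficulty lies is accurate, but the step you flag is the heart of the theorem, and your proposal offers no mechanism (degree relaxation, large-deviation bound, or otherwise) to carry it out.
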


The proof of this theorem is by way of bounds from enumerative combinatorics on the number 
of set partitions with bounded block size, and a second moment argument using a large deviations 
inequality. The condition $t(n) \le \log \log_b n$ marks roughly when specific set partition 
bounds are superseded by a generic bound, and our lower and upper estimates on the first moment diverge. We wonder how sharp this condition is with respect to constant-width 
concentration of $\alphac{t}(\G{n}{p})$. Such concentration 
is impossible when $t=\Omega(\sqrt{\log n})$, due to a term in the first 
moment that fluctuates unpredictably based on the value of 
$k/t-\lfloor k/t\rfloor$. (This rounding term has less impact when $t$ and $k$ have the same asymptotic order.)

Incidental to our sharp determination of the component stability number in 
Theorem~\ref{thm:alpha}, we obtain a good estimate of the component chromatic number 
for $t(n) \le \log \log_b n$. This is a small modification of Theorem~\ref{thm:alpha} 
for stronger concentration with slightly smaller sets, and then a close adaptation of the 
arguments in Section~5 of~\cite{FKM10} or in earlier work~\cite{McD90}. 
This adaptation is left to the reader.

\begin{theorem}\label{thm:chi,small}
Fix $0 < p < 1$.  If $t=t(n) \le \log \log_b n$, then a.a.s.
\begin{align*}
\chic{t}(\G{n}{p}) 
= \frac{n}{2\log_b n + t - 2\log_b t - \frac{2\log_b \log_b np}t + \Theta(1)}.
\end{align*}
\end{theorem}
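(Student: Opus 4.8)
The plan is to derive Theorem~\ref{thm:chi,small} from Theorem~\ref{thm:alpha} by the standard ``colour greedily while the graph is still large, then throw in trivial colour classes at the end'' scheme that has been used for $\chi(\G{n}{p})$ since Matula and Bollob\'as. The lower bound is immediate: since $\chic{t}(G) \ge |V(G)|/\alphac{t}(G)$ for every graph, Theorem~\ref{thm:alpha} applied to $\G{n}{p}$ gives $\chic{t}(\G{n}{p}) \ge n/(2\log_b n + t - 2\log_b t - 2(\log_b\log_b np)/t + \Theta(1))$ a.a.s. The entire content of the proof is therefore the matching upper bound, and the obstacle is that one cannot simply apply Theorem~\ref{thm:alpha} to a single subset and iterate, because the residual graph after removing a large $t$-component set is no longer a uniformly random graph on its remaining vertices (conditioning breaks independence).

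The standard fix is to prove a strengthened, more robust version of the first-moment/second-moment computation behind Theorem~\ref{thm:alpha}: namely, that for a slightly smaller target size $k_0 = 2\log_b n + t - 2\log_b t - 2(\log_b\log_b np)/t - C$ (for a large constant $C$), \emph{every} induced subgraph of $\G{n}{p}$ on at least $m := n/\log^2 n$ (say) vertices contains a $t$-component set of order at least $k_0$, a.a.s. This is where the ``small modification of Theorem~\ref{thm:alpha} for stronger concentration with slightly smaller sets'' enters: one runs the second-moment argument on $\G{m}{p}$ with the reduced size $k_0$, obtaining a failure probability that is not just $o(1)$ but is $o(2^{-n})$ (super-exponentially small, e.g. via the large-deviations inequality already invoked for Theorem~\ref{thm:alpha}), and then takes a union bound over all $\binom{n}{\ge m} \le 2^n$ choices of vertex subset of size at least $m$. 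The gap between $k_0$ and the true $\alphac{t}$ value — a constant additive slack — is exactly the room needed to make the first moment large enough to absorb the union bound, which is why only the $\Theta(1)$ term (and not the leading order) is affected.

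Given this robust existence statement, the upper bound construction is routine: repeatedly extract a $t$-component set of order at least $k_0$ and give it a fresh colour, continuing as long as the uncoloured graph has more than $m = n/\log^2 n$ vertices. This uses at most $n/k_0$ colours and leaves at most $m$ vertices uncoloured; colour each remaining vertex with its own colour, costing an extra $m = o(n/\log^2 n) = o(n/\log n)$ colours, which is negligible against the main term $n/k_0 = \Theta(n/\log n)$. Hence $\chic{t}(\G{n}{p}) \le n/k_0 + o(n/\log n) = n/(2\log_b n + t - 2\log_b t - 2(\log_b\log_b np)/t + \Theta(1))$ a.a.s., matching the lower bound. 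The relevant bookkeeping — checking that $k_0$ stays $\sim 2\log_b n$ over the whole range $t \le \log\log_b n$, and that the constant in the $\Theta(1)$ is uniform — follows the template in Section~5 of~\cite{FKM10} and in~\cite{McD90}, and the details are left to the reader as the authors indicate.

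The step I expect to be the main obstacle is the super-exponential concentration in the second claim: one must verify that the second-moment (or large-deviations) estimate for the number of $t$-component $k_0$-sets in a random graph on $m \ge n/\log^2 n$ vertices yields a failure probability below $2^{-n}$, with all the parameter dependence on $t$ (which itself may grow up to $\log\log_b n$) tracked carefully enough that the constant slack $C$ can be chosen once and for all. This is precisely the point where the hypothesis $t \le \log\log_b n$ is used — beyond it, as the authors note, the rounding term $k/t - \lfloor k/t\rfloor$ in the first moment fluctuates and the clean additive error degrades — so the proof is essentially an exercise in re-running the Theorem~\ref{thm:alpha} computation with explicit tail control rather than mere asymptotics.
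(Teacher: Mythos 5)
Your lower bound and the general shape of the upper bound (greedy extraction plus a union bound over large vertex subsets, as in the proof of Theorem~\ref{thm:chi,medium}) are the intended route, but your key ``robust existence'' statement is false as formulated, and this is a genuine gap rather than a technicality. You ask that a.a.s.\ \emph{every} subset of size at least $m=n/\log^2 n$ contain a $t$-component set of order $k_0 = 2\log_b n + t - 2\log_b t - 2(\log_b\log_b np)/t - C$ with $C$ a constant. But a fixed subset $S$ of size $m$ induces a copy of $\G{m}{p}$, and the first-moment bound (Proposition~\ref{prop:exp,smallt}\ref{prop:exp,smallt,upper,part} applied with $m$ in place of $n$) shows that a.a.s.\ $\alphac{t}(\G{m}{p}) \le 2\log_b m + t - 2\log_b t - 2(\log_b\log_b mp)/t + O(1) = k_0 - (4+o(1))\log_b\log n + O(1) < k_0$. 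So no amount of second-moment strengthening can deliver your claim: the additive constant slack $C$ cannot absorb the $\Theta(\log\log n)$ drop from $\log_b n$ to $\log_b m$. This is precisely the difference with Lemma~\ref{lem:chi,medium}, where the slack is $\eps\log_b n$ (a constant \emph{factor}), so passing to subsets of size $n/(\log n)^2$ costs only a lower-order amount; you have ported that scheme into a regime where it breaks. Consequently your greedy procedure with the single target $k_0$ stalls already when a constant fraction (roughly $b^{-C/2}n$) of vertices remains, and finishing those off naively costs a constant \emph{factor}, not a $\Theta(1)$ additive term, in the denominator.

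The adaptation the paper has in mind (Section~5 of~\cite{FKM10}, or~\cite{McD90}) repairs exactly this point: the target size must shrink with the remaining graph. One shows, for every $m$ with $n/(\log n)^2 \le m \le n$ and every $m$-subset, existence of a $t$-component set of order $k_0(m) = 2\log_b m + t - 2\log_b t - 2(\log_b\log_b mp)/t - C$; with this \emph{relative} constant slack the Janson estimate from the proof of Theorem~\ref{thm:alpha} does give per-subset failure probability $\exp(-\Omega(n^{2-o(1)}))=o(2^{-n})$, since then $\Exp(|\mathcal{S}_{m,t,k_0(m)}|)\ge n^{\Omega(C)}$ while $\Delta_1 = O((\log n)^{9}/n^2)\,(\Exp|\mathcal{S}|)^2$ and $\Delta_2 \le \Exp|\mathcal{S}|\cdot \exp(O(t(\log\log n)^2))$ --- so your union-bound step is fine once the target is recalibrated (and note the concentration comes from Janson; Lemma~\ref{lem:mixedbin} only controls $\Delta_2$). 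The extraction then produces classes whose sizes decay as the remaining set shrinks, and a final bookkeeping step is needed: only about $nb^{-j}$ vertices are ever coloured with classes of size below $k_0(n)-2j$, so the total number of classes is at most roughly $\sum_{j\ge0} n(b^{-j}-b^{-j-1})/(k_0(n)-2j) + n/(\log n)^2 = n/(2\log_b n + t - 2\log_b t - 2(\log_b\log_b np)/t - O(1))$. Without this rescaled target and harmonic-mean style accounting, the argument as you wrote it cannot reach the stated $\Theta(1)$ precision.
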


Last, in a simpler application of our determination of the component stability number, we introduce a related Ramsey-type parameter and find its basic asymptotic behaviour.
Recall that the (diagonal, two-colour) Ramsey number is the smallest integer $R(k)$ for which any graph on $R(k)$ vertices contains a set of $k$ vertices that induces either a stable set or a clique as a subgraph.
The development of bounds for $R(k)$ as $k\to\infty$ is an important and difficult area of mathematics with over eight decades of history~\cite{Erd47,ErSz35}.
We now consider a generalisation of $R(k)$ where the notion of $t$-component set replaces that of stable set.
The {\em $t$-component Ramsey number} is the smallest integer $R^t(k)$ for which any graph on $R^t(k)$ vertices must contain a set of at least $k$ vertices that is a $t$-component set in either the graph or its complement.
We treat $t$ as a function of $k$.
Clearly, the $1$-component Ramsey number $R^1(k)$ coincides with $R(k)$, and by classic arguments (that use bounds on $\alpha(\G{n}{1/2})$)~\cite{Erd47,Spe77} has exponential growth in $k$. At the other extreme, $R^k(k)$ is trivially $k$. So we expect to see a dramatic decrease in $R^t(k)$ by increasing $t$ from $1$ to $k$. Note also that $R^t(k)$ is non-increasing in $t$.
The next result uses bounds on $\alphac{t}(\G{n}{1/2})$ and shows that $R^t(k)$ is at least exponential in $k$ in nearly the entire range of $t$, i.e.~the change from exponential to polynomial growth occurs in a narrow interval near $t = k$.

\begin{proposition}\label{prop:ramseylower}
Fix $0\le \eps < 1/2$. Then, as $k\to\infty$,
\begin{align*}
R^{\lfloor(1-\eps)k\rfloor}(k) \ge (1+o(1)) \frac{k}{3e}2^{\eps(1-\eps)k}.
\end{align*}
\end{proposition}

As we discuss in Section~\ref{sec:ramsey}, this result can be complemented by a K\H{o}v\'ari--S\'os--Tur\'an-type result.

\paragraph{Further remarks:}
\begin{itemize}
\itemsep0pt
\item[$\star$] Both the $t$-component chromatic number~\cite{ADOV03,BeSz07,BeSz09,EdFa05,EsJo14,EsOc14+,HST03,
Kaw09,KaMo07,KMRV97,LMST08,LiOu15+,MaPr08} and the $t$-component stability 
number~\cite{EdFa01,EdFa08,HPT08,JaTh08,Rah14+} have been actively considered from several viewpoints, 
especially in graph theory and theoretical computer science.
\item[$\star$] Note that $\alphac{t}(G)$ has often been studied in the following form: 
given $G$ and 
$t$, the {\em $t$-fragmentability} of $G$ is essentially $(|V(G)| - \alphac{t}(G))/|V(G)|$. 
This for instance has been considered in sparse random graphs as a watermark for feasibility 
of vaccination protocols in networks~\cite{BJM07,JaTh08}.
\item[$\star$] It is worth mentioning related work (involving the second author), where instead of 
component order we bound the (average) degree~\cite{FKM10,FKM14,KaMc10}.
Macroscopically, these parameters exhibited a similar threshold. However, the behaviour at the threshold was smooth and the magnitude of the threshold was of a different order in sparse random graphs. In Section~\ref{sec:sparse} we discuss this latter difference.
\item[$\star$] When $t$ is fixed, the property of being a $t$-component set is a hereditary 
property ---that is, it is a graph property that is closed under vertex-deletion--- whereupon 
 broad results on hereditary colourings apply~\cite{BoTh95,BoTh00,Sch92}. 
However, it is important here that we allow $t$ to grow as a function of~$n$.
\item[$\star$] Bounded monochromatic components of random graphs are also considered in the 
separate context of partitions of the edge set~\cite{BFKLS11,SST10}, a problem related to 
Achlioptas processes that control the growth of several ``giants'' simultaneously.
\end{itemize}

\paragraph{Plan of the paper.}
In Section~\ref{sec:exp}, we conduct an analysis of the expected number of 
$t$-component $k$-sets in $\G{n}{p}$, mainly via asymptotic set partition and non-edge counts.
We prove Theorem~\ref{thm:chi,medium} in Section~\ref{sec:chi} with a three-part second moment 
argument. In Section~\ref{sec:alpha}, we use an easier second moment argument that 
applies a large deviations inequality in order to prove Theorem~\ref{thm:alpha}. In Section~\ref{sec:sparse}, we discuss results for random graphs with smaller edge density. In Section~\ref{sec:ramsey}, we study the Ramsey-type problem.

\section{The expected number of $t$-component $k$-sets}\label{sec:exp}

Let $\mathcal{S}_{n,t,k}$ be the collection of $t$-component $k$-sets in $\G{n}{p}$.
This section is devoted to analysing the expected behaviour of $|\mathcal{S}_{n,t,k}|$: 
this governs the asymptotic behaviour of $\chic{t}(\G{n}{p})$.
We divide our analysis into lower and upper bounds on $\Exp(|\mathcal{S}_{n,t,k}|)$, 
partly because these bounds have different scopes.
These bounds depend mostly on sharp non-edge counts, and asymptotic estimates on 
the number of set partitions with bounded block size. We often analyse set partitions 
with the help of some analytic combinatorics. An important remark is that our expectation estimates 
naturally divide with respect to the value of $k/t$, either less than or greater than $2$, as in 
the former case the count of set partitions is much simpler.

Understanding the expectation computations may provide some insight into the formulas in 
Theorems~\ref{thm:chi,medium} and~\ref{thm:alpha}. For those readers who prefer to skip or 
skim over the rest of this section, the main results we require later in are the 
following two propositions and Lemma~\ref{lem:careful}.

\begin{proposition}[First-order estimate for $t = \Theta(\log n)$]\label{prop:exp,mediumt}
Suppose $0 < p < 1$ is fixed and $\eps>0$ is a small enough constant.
Suppose $t=t(n) \sim \tau \log_b n$ as $n \to \infty$ for some $\tau > 0$ and let $\kappa$ be the unique positive real satisfying $\iota(\tau,\kappa)=0$, for $\iota$ as defined in~\eqref{eqn:iota}.
\begin{enumerate}
\itemsep0pt
\item\label{prop:exp,mediumt,upper,part}
If
$k = k(n) \sim (\kappa+\eps) \log_b n$ as $n \to \infty$,
then $\Exp(|\mathcal{S}_{n,t,k}|) \le \exp((1+o(1))\iota(\tau,\kappa+\eps)(\log n)^2/\log b)$. 
\item\label{prop:exp,mediumt,lower,part}
If
$k = k(n) \sim (\kappa-\eps) \log_b n$ as $n \to \infty$,
then $\Exp(|\mathcal{S}_{n,t,k}|) \ge \exp((1+o(1))\iota(\tau,\kappa-\eps)(\log n)^2/\log b)$. 
\end{enumerate}
\end{proposition}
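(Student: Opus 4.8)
The plan is to start from $\Exp(|\mathcal{S}_{n,t,k}|) = \binom{n}{k}\,\Pr(\G{k}{p}\text{ is }t\text{-component})$ and to estimate the probability by bookkeeping over component structures. Say a partition $\mathcal{P}$ of $[k]$ all of whose blocks have size at most $t$ has \emph{type} $(a_1,\dots,a_t)$ if it has exactly $a_j$ blocks of size $j$, so that $\sum_j j a_j = k$. A graph on $[k]$ is $t$-component precisely when some such $\mathcal{P}$ has no edge between distinct blocks (take $\mathcal{P}$ to be the component partition for one implication; for the other, each component must lie inside a single block). A union bound over $\mathcal{P}$, grouped by type, therefore gives
\[
\Pr(\G{k}{p}\text{ is }t\text{-component}) \;\le\; \sum_{(a_j)}\frac{k!}{\prod_j (j!)^{a_j}\, a_j!}\; q^{\binom{k}{2}-\sum_j a_j\binom{j}{2}},
\]
since $k!/\prod_j(j!)^{a_j}a_j!$ counts partitions of type $(a_j)$ and $q^{\binom{k}{2}-\sum_j a_j\binom{j}{2}}=b^{-\binom{k}{2}+\sum_j a_j\binom{j}{2}}$ is the probability of no crossing edge for a fixed such partition. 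For the lower bound of part~\ref{prop:exp,mediumt,lower,part} I will instead keep a single well-chosen $\mathcal{P}$ with blocks $B_1,\dots,B_m$ of sizes $k_1,\dots,k_m$ and use $\Pr(\G{k}{p}\text{ is }t\text{-component}) \ge \Pr(\text{the components of }\G{k}{p}\text{ are }B_1,\dots,B_m) = \big(\prod_i\Pr(\G{k_i}{p}\text{ connected})\big)\,q^{\binom{k}{2}-\sum_i\binom{k_i}{2}}$, the factorization holding because these events concern disjoint edge sets.

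The heart of the argument is the optimization $\max\sum_j a_j\binom{j}{2}$ subject to $\sum_j j a_j=k$ and $a_j\in\mathbb{Z}_{\ge 0}$. Since $\binom{j}{2}$ is convex, a one-step exchange (moving a vertex from a block of size $j_1$ to one of size $j_2$ with $j_1\le j_2<t$ changes the objective by $j_2-j_1+1>0$) shows the maximum is attained by $\ell:=\lfloor k/t\rfloor$ blocks of size $t$ together with, if $r:=k-\ell t>0$, one block of size $r$, with value $\ell\binom{t}{2}+\binom{r}{2}$. A short computation gives $-\binom{k}{2}+\ell\binom{t}{2}+\binom{r}{2}=\tfrac12\big(r(r-t)-k(k-t)\big)$, and since $\log_b\binom{n}{k}=(1+o(1))k\log_b n$, substituting $k\sim\kappa'\log_b n$, $t\sim\tau\log_b n$, $r\sim(\kappa'-\tau\lfloor\kappa'/\tau\rfloor)\log_b n$ (with $\kappa'=\kappa\pm\eps$) turns the exponent of the dominant term into exactly $(1+o(1))\iota(\tau,\kappa')(\log_b n)^2$, i.e.\ $(1+o(1))\iota(\tau,\kappa')(\log n)^2/\log b$. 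One uses here that $\iota(\tau,\cdot)$ is continuous (in particular across the points where $\lfloor k/t\rfloor$ jumps, so that $\lfloor k/t\rfloor$ being only asymptotically $\lfloor\kappa'/\tau\rfloor$ is harmless) and that, as one checks from the discussion after Theorem~\ref{thm:chi,medium}, $\iota(\tau,\cdot)$ is positive on $(0,\kappa)$ and negative beyond, so that for $\eps$ small enough $\iota(\tau,\kappa+\eps)<0<\iota(\tau,\kappa-\eps)$.

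It remains to confirm that every discarded factor lives at scale $o((\log n)^2)$. For part~\ref{prop:exp,mediumt,upper,part} the number of types is at most $(k+1)^t=\exp(O(\log n\,\log\log n))$, the partition-count factor is at most $k!=\exp(O(\log n\,\log\log n))$ for every type, and $\log\binom{n}{k}=k\log n-\log k!+O(k^2/n)=(1+o(1))k\log n$; multiplying the uniform per-type bound (obtained by replacing $\sum_j a_j\binom{j}{2}$ by its maximum) by these negligible factors yields the stated upper bound. For part~\ref{prop:exp,mediumt,lower,part}, take $\mathcal{P}$ of the optimal type: each of the $\ell=O(1)$ blocks of size $t$ is connected with probability $1-O(tb^{-t})=1-o(1)$, and the (possibly large) block of size $r$ is connected with probability at least a positive constant, so $\prod_i\Pr(\G{k_i}{p}\text{ connected})=\Omega(1)$; combining with $\binom{n}{k}\cdot k!/\big((t!)^\ell\ell!\,r!\big)\cdot q^{\binom{k}{2}-\ell\binom{t}{2}-\binom{r}{2}}$ (each auxiliary factor again $\exp(o((\log n)^2))$) and taking logarithms gives the lower bound.

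The step I expect to be the main obstacle is making the optimization airtight and reconciling it with the exact formula~\eqref{eqn:iota}: one must check that the partition-count factor, which pulls toward many small blocks, never competes with the non-edge term at the $(\log n)^2$ scale, and one must carry the floor-function bookkeeping through the algebra so that the remainder $r$ lands precisely where $\iota$ expects it. The union bound, the edge-disjoint factorization, and the order-of-magnitude estimates for the auxiliary factors are then routine.
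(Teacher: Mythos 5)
Your proof is correct and delivers exactly the stated bounds; it is the same first-moment strategy as the paper, but executed more uniformly. The paper proves the proposition in four pieces, split according to $\tau>2$ or $\tau\le2$ (Propositions~\ref{prop:exp,mediumt,upper,1}, \ref{prop:exp,mediumt,upper,2}, \ref{prop:exp,mediumt,lower,1}, \ref{prop:exp,mediumt,lower,2}): for $\tau>2$ the witness structure is a bipartition of the $k$-set (largest component versus the rest), giving exponent $t(k-t)$, while for $\tau\le2$ the upper bound uses set-partition asymptotics (Proposition~\ref{prop:SPtk} and~\eqref{eqn:SPtk}) and the lower bound counts configurations built from the $\mathcal{EP}_{t,k}$ extremal partitions with connected blocks (via Proposition~\ref{prop:conn}). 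You collapse all of this into a single union bound over block-size types, bounding the partition count crudely by $(k+1)^t\,k!=\exp(o((\log n)^2))$ --- all the precision this first-order statement needs --- and identifying the extremal crossing count $\tfrac12\bigl(k(k-t)-r(r-t)\bigr)$ by an exchange argument; since $\lfloor\kappa/\tau\rfloor=1$ when $\tau>2$, your single formula specializes to both of the paper's cases, and your lower-bound construction coincides with the paper's. What you gain is uniformity in $\tau$ and independence from the analytic set-partition bounds, which the paper needs only for the constant-width regime of Theorem~\ref{thm:alpha}; what the paper's finer lemmas buy is per-$k$ bounds reusable there. Two small points to tidy, neither a gap: make explicit that when $k/t$ approaches an integer the term $r(r-t)$ is $o((\log_b n)^2)$, consistent with the continuity of $\iota$ you invoke; and if you multiply your lower bound by the number $k!/\bigl((t!)^{\ell}\ell!\,r!\bigr)$ of extremal partitions, note that the events ``the components are exactly the blocks of $\mathcal{P}$'' are disjoint over $\mathcal{P}$ (or simply keep one $\mathcal{P}$, since this factor is $\exp(o((\log n)^2))$ anyway).
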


\begin{proposition}[Constant-width estimate for $t \le \log \log_b np$]\label{prop:exp,smallt}
Fix $0 < p < 1$. Suppose $t=t(n)$ satisfies $t\le \log \log_b np$.
\begin{enumerate}
\itemsep0pt
\item\label{prop:exp,smallt,upper,part}
If $k = k(n)$ satisfies as $n \to \infty$ that 
\begin{align*}
k \ge 2\log_b n + t -2\log_b t- \frac{2\log_b \log_b np}t + \frac{10}{\log b},
\end{align*}
then $\Exp(|\mathcal{S}_{n,t,k}|) \le \exp(-k)$ for $n$ large enough.
\item\label{prop:exp,smallt,lower,part}
If $k = k(n)$ satisfies as $n \to \infty$ that $k \ge \log_b n$ and 
\begin{align*}
k \le 2\log_b n + t - 2\log_b t - \frac{2\log_b \log_b np}{t} -\frac{2}{\log b},
\end{align*}
then $\Exp(|\mathcal{S}_{n,t,k}|) \ge \exp(k)$ for $n$ large enough.
\end{enumerate}
\end{proposition}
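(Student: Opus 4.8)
The plan is to prove Proposition~\ref{prop:exp,smallt} by a direct first moment computation. Since the subgraph of $\G{n}{p}$ induced on any fixed $k$-set is distributed as $\G{k}{p}$,
\[
\Exp(|\mathcal{S}_{n,t,k}|)=\binom{n}{k}\,\Pr\bigl(\G{k}{p}\text{ is }t\text{-component}\bigr),
\]
and I would expand the probability exactly over the \emph{types} of the possible component partitions. A set partition of $[k]$ all of whose blocks have size at most $t$ is determined up to relabelling by a vector $a=(a_1,\dots,a_t)$ with $\sum_{j}ja_j=k$, where $a_j$ is its number of blocks of size $j$; the number of set partitions of type $a$ is $N(a)=\frac{k!}{\prod_{j}(j!)^{a_j}a_j!}$, its cross-pair count is $\binom{k}{2}-\sum_{j}a_j\binom{j}{2}$, and the probability that a given partition of type $a$ is exactly the component partition of $\G{k}{p}$ is $\bigl(\prod_j c_j^{a_j}\bigr)q^{\binom{k}{2}-\sum_j a_j\binom{j}{2}}$ with $c_j=\Pr(\G{j}{p}\text{ connected})$. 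Thus
\[
\Pr\bigl(\G{k}{p}\text{ is }t\text{-component}\bigr)=\sum_{a:\ \sum_j ja_j=k}N(a)\Bigl(\prod_j c_j^{a_j}\Bigr)q^{\binom{k}{2}-\sum_j a_j\binom{j}{2}}.
\]
For part~\eqref{prop:exp,smallt,upper,part} I would drop the factors $c_j\le1$ and use $\sum_j a_j\binom{j}{2}=\tfrac12\sum_j(ja_j)(j-1)\le\tfrac12 k(t-1)$, reducing the upper estimate to $\binom{n}{k}B_{\le t}(k)\,q^{\binom{k}{2}-k(t-1)/2}$, where $B_{\le t}(k)=\sum_a N(a)$ is the number of set partitions of $[k]$ with all blocks of size at most $t$. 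For part~\eqref{prop:exp,smallt,lower,part} I would keep a single term, that of the type $a^{\ast}$ with $\lfloor k/t\rfloor$ blocks of size $t$ and one block of size $k-t\lfloor k/t\rfloor$; here $\prod_j c_j^{a^{\ast}_j}\ge p^{\sum_j(j-1)a^{\ast}_j}\ge p^{k}$ (a Hamilton path certifies connectivity, so $c_j\ge p^{j-1}$), costing only an $n^{-\Theta(1)}$ factor, i.e.\ shifting the eventual threshold in $k$ by $O(1)$; when $t$ is so small that even this is too lossy relative to the claimed gain, one instead keeps the all-singleton type, recovering the classical independent-set estimate.

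The enumerative heart is to show that for $t\le\log\log_b np$ the single type $a^{\ast}$ governs $\sum_a N(a)q^{-\sum_j a_j\binom{j}{2}}$ up to a factor $n^{O(1)}$, and to estimate $N(a^{\ast})$ precisely. The edge-saving factor $q^{-\sum_j a_j\binom{j}{2}}$ is maximised at $a^{\ast}$, and on the scale $k=\Theta(\log_b n)$ relevant here its logarithm varies at rate $\sim\tfrac12 k\log b\sim\log n$ in the block size, which dominates the opposing pull — of order only $k/t$ — of the partition count $N(a)$; the latter on its own would prefer blocks of size $\Theta(\log k)$, but the hypothesis $t\le\log\log_b np\lesssim\log k$ keeps the admissible block sizes below that optimum, so $B_{\le t}(k)=\exp\bigl((1+o(1))\log N(a^{\ast})\bigr)$ and the crude decoupling used for the upper bound loses only $n^{O(1)}$ — this is essentially the point beyond which a generic set-partition bound supersedes the specific one and the two one-sided estimates separate. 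A Stirling estimate gives $\log N(a^{\ast})=k\log k-k\log t-(k/t)\log(k/t)+O(k/t)$ — in particular to within a factor $n^{o(1)}$, enough to retain the lower-order terms $-2\log_b t$ and $-\frac{2\log_b\log_b np}{t}$ — and the correction from the size-$(k\bmod t)$ block contributes at most $O(t^2\log b)=o(1)$ to $k$ because $t=o(\sqrt{\log_b n})$; this is the rounding quantity whose fluctuation, as the authors note, rules out constant-width concentration once $t=\Omega(\sqrt{\log n})$. Combining these with $\log\binom{n}{k}=k\log n-k\log k+k+O(\log k)$ and $q^{\binom{k}{2}-k(t-1)/2}$ (the $k\log k$ terms cancelling) and solving for where $\log\Exp(|\mathcal{S}_{n,t,k}|)$ vanishes yields the threshold $K_1=2\log_b n+t-2\log_b t-\frac{2\log_b\log_b np}{t}+\Theta(1)$.

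It remains to convert this into the stated bounds. In the range $\log_b n\le k\le n$ one has $\Exp(|\mathcal{S}_{n,t,k+1}|)/\Exp(|\mathcal{S}_{n,t,k}|)\approx(n/k)q^{k}=n^{-1+o(1)}$, so $\log\Exp$ is monotone decreasing there and drops by $(1+o(1))\log n$ per unit of $k$; hence the largest $k$ with $\Exp\ge e^{k}$ is $K_e=K_1-(1+o(1))\tfrac{2}{\log b}$ and the smallest $k$ with $\Exp\le e^{-k}$ is $K_{-e}=K_1+(1+o(1))\tfrac{2}{\log b}$, and a single boundary check at the relevant endpoint then suffices (for larger $k$ in part~\eqref{prop:exp,smallt,upper,part} the per-step ratio is even smaller than $e^{-1}$; for smaller $k\ge\log_b n$ in part~\eqref{prop:exp,smallt,lower,part} the expectation is even larger). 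The constant $\tfrac{2}{\log b}$ in part~\eqref{prop:exp,smallt,lower,part} is essentially the offset $K_1-K_e$, the stated formula already lying a definite amount below $K_1$, while the larger $\tfrac{10}{\log b}$ in part~\eqref{prop:exp,smallt,upper,part} comfortably absorbs both the offset $K_{-e}-K_1$ and the $\Theta(1)$ error in $K_1$ (including the $O(1)$ shifts from the connectivity factors). I expect the main obstacle to be the uniform-in-$t$ set-partition enumeration underlying the second paragraph: pinning $N(a^{\ast})$ and $B_{\le t}(k)$ down to relative precision $n^{o(1)}$ so as to preserve the terms $-2\log_b t$ and $-\frac{2\log_b\log_b np}{t}$, and proving that no other type beats $a^{\ast}$ by more than $n^{O(1)}$ in the joint optimisation against the edge count — here $t$ grows with $n$, so a fixed-$t$ asymptotic cannot simply be invoked and one must work with estimates effective in $t$.
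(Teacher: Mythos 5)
Your skeleton is essentially the paper's: a first-moment count over set partitions of $[k]$ with blocks of size at most $t$, with the extremal type $a^{\ast}$ (all blocks of size $t$ plus a remainder) supplying both the minimum cross-pair count for the upper bound and the single term kept for the lower bound, followed by Stirling and division by $k/2$. Your threshold computation $K_1=2\log_b n+t-2\log_b t-\frac{2\log_b\log_b np}{t}+\Theta(1)$ is right, and the rounding correction $\frac{(k-\hat k)(t-(k-\hat k))}{2}\log b=O(t^2)$ is indeed negligible here, exactly as in the paper. But there are two genuine gaps. First, in part~\ref{prop:exp,smallt,lower,part} your connectivity certificate is too lossy: bounding $\Pr(\G{j}{p}\text{ connected})\ge p^{j-1}$ by a Hamilton path costs a factor $p^{\Theta(k)}=e^{-\Theta(k)}$, which after dividing by $k/2$ shifts the admissible range of $k$ down by the $p$-dependent constant $2\log_b(1/p)$ (plus your other $O(1)$ losses). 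The statement you must prove allows only the fixed slack $\frac{2}{\log b}$ below the threshold, and $2\log_b(1/p)=\frac{2\log(1/p)}{\log b}$ already exceeds this whenever $p\le 1/e$ (and eats most of it otherwise), so part~\ref{prop:exp,smallt,lower,part} as stated does not follow from your construction; your all-singleton fallback only rescues very small $t$, not the whole range $t\le\log\log_b np$. The paper avoids this by taking each size-$t$ block to be connected with probability $\eta>2/3$ (Erd\H{o}s--R\'enyi, Proposition~\ref{prop:conn}), so the total connectivity cost is $\eta^{\hat k/t}=e^{-O(k/t)}$, contributing only $O(1/t)$ to the threshold; with that replacement your lower bound goes through (and your count is then exactly the paper's $\mathcal{EP}_{t,k}$ term).

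Second, the enumerative step you yourself flag is the actual content of part~\ref{prop:exp,smallt,upper,part} and is only asserted: you need $B_{\le t}(k)\le \exp\bigl(k\log k-\frac kt\log k-k\log t+O(k)\bigr)$, i.e.\ $B_{\le t}(k)\le N(a^{\ast})e^{O(k)}$, uniformly in $t\le\log k$ with $t$ growing with $n$. The paper proves precisely this (Proposition~\ref{prop:SPtk}) by a saddle-point analysis of $\exp\bigl(\sum_{i\le t}z^i/i!\bigr)$. Your dominant-type route can be made rigorous (the number of admissible types is $n^{o(1)}$, so it reduces to showing $N(a)\le N(a^{\ast})e^{O(k)}$ for every type with parts at most $t$, e.g.\ by an exchange argument with Stirling bounds effective in $t$), but as written this is a heuristic about competing growth rates, and note that the formulation $B_{\le t}(k)=\exp\bigl((1+o(1))\log N(a^{\ast})\bigr)$ is too weak: an $o(k\log k)$ error in the exponent would swamp the $\frac kt\log k$ term you need to retain when $t$ is near $\log\log_b np$; you need additive $O(k)$ precision. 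Finally, the closing ratio/monotonicity conversion ($\Exp(|\mathcal{S}_{n,t,k+1}|)/\Exp(|\mathcal{S}_{n,t,k}|)\approx (n/k)q^k$) is both unproved --- it presumes $\Pr(\G{k+1}{p}\text{ is }t\text{-component})/\Pr(\G{k}{p}\text{ is }t\text{-component})\approx q^k$ --- and unnecessary: as in the paper, the direct bound $\frac{2\log\Exp(|\mathcal{S}_{n,t,k}|)}{k}\le 2\log n-(k-t)\log b-2\log t-\frac{2\log\log_b np}{t}+O(1)$ is monotone in $k$ and can simply be evaluated at every $k$ in the stated range.
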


We use Proposition~\ref{prop:exp,mediumt} in Section~\ref{sec:chi} 
for the $t=\Theta(\log n)$ regime, and Proposition~\ref{prop:exp,smallt} in Section~\ref{sec:alpha} 
for the proof of Theorem~\ref{thm:alpha}.
Proposition~\ref{prop:exp,mediumt} follows from Propositions~\ref{prop:exp,mediumt,upper,1}, 
\ref{prop:exp,mediumt,upper,2}, \ref{prop:exp,mediumt,lower,1}, and~\ref{prop:exp,mediumt,lower,2}. 
Proposition~\ref{prop:exp,smallt} follows from Lemmas~\ref{lem:exp,smallt,upper,1} 
and~\ref{lem:exp,smallt,lower}. 

The following calculations will be useful when dealing with bounds involving 
$\iota$ as defined in~\eqref{eqn:iota}. The proof is found in the appendix.

\begin{lemma}\label{lem:careful}
For $\tau>0$, let $\kappa$ be the unique positive real satisfying $\iota(\tau,\kappa)=0$, for $\iota$ as defined in~\eqref{eqn:iota}.\begin{enumerate}
\itemsep0pt
\item\label{prop:careful,minus}
If $0 \le \eps < \tau\left(\left\lfloor\frac{\kappa}{\tau}\right\rfloor+1\right) - \kappa$, then $\iota(\tau,\kappa+\eps) = -\eps\left(\tau\left\lfloor \frac{\kappa}{\tau}\right\rfloor-1\right) < -\eps$.
\item\label{prop:careful,divide}
If $\tau | \kappa$ and $0 < \eps<\tau$, then $\iota(\tau,\kappa-\eps) = \eps$.
\item\label{prop:careful,notdivide}
If $\tau\not|\kappa$ and $0 \le \eps \le \kappa - \tau\left\lfloor\frac{\kappa}{\tau}\right\rfloor$, then $\iota(\tau,\kappa-\eps) = \eps\left(\tau\left\lfloor \frac{\kappa}{\tau}\right\rfloor-1\right) > \eps$.
\end{enumerate}
\end{lemma}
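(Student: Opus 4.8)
The plan is to treat all three parts by direct substitution into the defining expression~\eqref{eqn:iota} for $\iota$, after first simplifying the floor terms using the hypotheses. The key observation for each part is that the quantity $\lfloor \kappa/\tau \rfloor$ is stable — it does not change — when $\kappa$ is perturbed by the permitted amount $\eps$, so the floor becomes a constant and the whole expression collapses to an elementary polynomial identity in $\eps$.

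For part~\ref{prop:careful,minus}, I would write $m = \lfloor \kappa/\tau \rfloor$ and observe that the hypothesis $0 \le \eps < \tau(m+1) - \kappa$ says exactly that $\kappa \le \kappa + \eps < \tau(m+1)$, equivalently $m \le (\kappa+\eps)/\tau < m+1$, so $\lfloor (\kappa+\eps)/\tau \rfloor = m$. Substituting into~\eqref{eqn:iota} with $\kappa$ replaced by $\kappa+\eps$ gives
\begin{align*}
\iota(\tau,\kappa+\eps) = \tfrac12\bigl((\kappa+\eps - \tau m)(\kappa+\eps-\tau m-\tau) - (\kappa+\eps)(\kappa+\eps-\tau-2)\bigr).
\end{align*}
Now I expand this as a quadratic in $\eps$ and subtract the analogous expression for $\iota(\tau,\kappa)$ (which is $0$ by hypothesis, using the same $m$); the quadratic-in-$\eps$ terms cancel between the two halves of~\eqref{eqn:iota}, and a short computation leaves $\iota(\tau,\kappa+\eps) = -\eps(\tau m - 1)$. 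Since $\iota(\tau,\kappa)=0$ forces $\kappa < \tau+2$ region to have $\tau m \ge 2$ (one checks $m\ge 1$ and indeed the product structure of $\iota=0$ gives $\tau\lfloor\kappa/\tau\rfloor\ge 2$ whenever a positive root exists), we get $\tau m - 1 > 1$, hence $\iota(\tau,\kappa+\eps) < -\eps$. The only mild subtlety here is justifying $\tau m \ge 2$ from $\iota(\tau,\kappa)=0$; this I would dispatch by noting that $\iota(\tau,\kappa)=0$ rearranges to $(\kappa-\tau m)(\kappa-\tau m-\tau) = \kappa(\kappa-\tau-2)$ and analysing the sign of each side, or simply by invoking the already-stated fact that $\tau+1 < \kappa \le \tau+2$.

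For part~\ref{prop:careful,divide}, the hypothesis $\tau \mid \kappa$ means $\kappa = \tau m$ with $m = \kappa/\tau$ an integer, and for $0 < \eps < \tau$ we have $m-1 < (\kappa-\eps)/\tau < m$, so $\lfloor (\kappa-\eps)/\tau \rfloor = m-1$. Substituting $\kappa - \eps$ and this floor value into~\eqref{eqn:iota}, the term $(\kappa-\eps) - \tau(m-1) = \tau - \eps$, and expanding gives $\iota(\tau,\kappa-\eps) = \tfrac12\bigl((\tau-\eps)(-\eps) - (\kappa-\eps)(\kappa-\eps-\tau-2)\bigr)$; since $\iota(\tau,\kappa) = 0$ with $\kappa = \tau m$ gives a relation to subtract off, the calculation reduces to the claimed identity $\iota(\tau,\kappa-\eps) = \eps$. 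Part~\ref{prop:careful,notdivide} is analogous: when $\tau \nmid \kappa$, writing $m = \lfloor \kappa/\tau\rfloor$ we have $\tau m < \kappa$, so for $0 \le \eps \le \kappa - \tau m$ the value $(\kappa-\eps)/\tau$ still lies in $[m, m+1)$ and the floor remains $m$; substitution and the same cancellation against $\iota(\tau,\kappa) = 0$ yield $\iota(\tau,\kappa-\eps) = \eps(\tau m - 1)$, which exceeds $\eps$ by the same $\tau m \ge 2$ bound. I expect the bookkeeping of which floor value is active to be the only place to be careful; everything else is a routine expansion of a degree-two polynomial, and since this is relegated to the appendix I would carry out those expansions in full there but not belabour them in the main text.
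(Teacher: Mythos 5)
Your overall strategy is exactly the paper's: check that $\lfloor\cdot/\tau\rfloor$ is unchanged (or drops by one, in part~\ref{prop:careful,divide}) under the permitted perturbation, substitute into~\eqref{eqn:iota}, and cancel against $\iota(\tau,\kappa)=0$; the identities $-\eps\left(\tau\lfloor\kappa/\tau\rfloor-1\right)$ and $\eps\left(\tau\lfloor\kappa/\tau\rfloor-1\right)$ come out the same way in the paper's appendix. Two points need tightening, though. First, the strict inequalities in parts~\ref{prop:careful,minus} and~\ref{prop:careful,notdivide} require $\tau\lfloor\kappa/\tau\rfloor>2$ \emph{strictly}; your step ``$\tau m\ge 2$, hence $\tau m-1>1$'' is a non sequitur, and invoking only $\tau+1<\kappa\le\tau+2$ does not give the strict bound either (with $r=\kappa-\tau\lfloor\kappa/\tau\rfloor<\tau$ it only yields $\tau\lfloor\kappa/\tau\rfloor=\kappa-r>1$). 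The paper dismisses this as routine, but it does need the structure of $\iota(\tau,\kappa)=0$: rewriting it as $r(\tau-r)=\kappa s$ with $s:=\tau+2-\kappa\ge 0$, if $s=0$ then $r=0$ and $\tau\lfloor\kappa/\tau\rfloor=\kappa=\tau+2>2$, while if $s>0$ and one had $\tau\lfloor\kappa/\tau\rfloor=\kappa-r\le 2$, i.e.\ $r\ge\tau-s$, then $r(\tau-r)\le rs<\tau s<\kappa s$ (using $r<\tau$ and $\kappa>\tau$), contradicting $r(\tau-r)=\kappa s$. Your alternative suggestion of ``analysing the sign of each side'' is the right instinct, but as written the justification offered does not close this step.

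Second, in part~\ref{prop:careful,divide} the subtraction you describe only brings you to $\iota(\tau,\kappa-\eps)=\eps(\kappa-\tau-1)$; to land on exactly $\eps$ you must additionally observe that $\tau\mid\kappa$ together with $\iota(\tau,\kappa)=0$ gives $\kappa(\kappa-\tau-2)=0$, hence $\kappa=\tau+2$ --- the paper makes this explicit, and without it the claimed identity is not yet reached. (A cosmetic remark, shared with the paper's own statement: the strict inequalities in parts~\ref{prop:careful,minus} and~\ref{prop:careful,notdivide} are only meaningful for $\eps>0$, since at $\eps=0$ both sides vanish.) With these two repairs your computation is complete and essentially identical to the paper's proof.
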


\subsection{Upper bounds on $\Exp(|\mathcal{S}_{n,t,k}|)$}

\begin{lemma}\label{lem:exp,larget,upper}
Suppose $p = p(n)$ satisfies $0 < p < 1$ and $n p \to \infty$ as $n \to \infty$.
Suppose $t=t(n)$ and $k = k(n)$ satisfy that $t,k \to \infty$ as $n \to \infty$.
Furthermore assume $t=O(\log_b np)$, $t \ge k/2$ (so that $1 \le k/t \le 2$) and 
\begin{align*}
k \ge t + \frac{k}{t}\log_b \frac{np}{pt + \log np} + \frac{6}{\log b}.
\end{align*}
Then $\Exp(|\mathcal{S}_{n,t,k}|) \le \exp(-t)$ for $n$ large enough.
\end{lemma}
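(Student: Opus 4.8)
The plan is to bound $\Exp(|\mathcal{S}_{n,t,k}|)$ by a union bound over the choice of a $k$-set $S\subseteq[n]$ together with a partition of $S$ into blocks of size at most $t$ (the prospective vertex sets of the components induced on $S$), weighting each term by the probability that $\G{n}{p}$ restricted to $S$ has exactly that component structure. The essential observation is that, for a fixed partition $P$ of $S$ into blocks $B_1,\dots,B_r$, the probability that these are precisely the components is at most the probability that there are no edges between distinct blocks, which is $q^{e(P)}$ where $e(P)=\binom{k}{2}-\sum_i\binom{|B_i|}{2}$ is the number of ``crossing'' non-edges. Hence
\begin{align*}
\Exp(|\mathcal{S}_{n,t,k}|)\ \le\ \binom{n}{k}\sum_{P}q^{\binom{k}{2}-\sum_i\binom{|B_i|}{2}},
\end{align*}
where $P$ ranges over partitions of a fixed $k$-set into blocks of size $\le t$. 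First I would observe that $\sum_i\binom{|B_i|}{2}$ is maximised, subject to $r$ blocks each of size $\le t$ summing to $k$, by making as many blocks as possible of size exactly $t$; since $k/t\le 2$ this optimal profile has $\lfloor k/t\rfloor$ blocks of size $t$ and one block of size $k-t\lfloor k/t\rfloor$, which in the range $1\le k/t\le 2$ means either one block of size $k$ (if $t\ge k$) or two blocks, one of size $t$ and one of size $k-t$. So the dominant contribution to the exponent comes from $e(P)\ge k(k-t-?)/2$-type bounds; the precise crossing-edge count in the two-block case is $t(k-t)$, which is minimised over $t\in[k/2,k]$ at the endpoints, and the relevant regime for the bound in the hypothesis is the one producing the smallest such count.

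Next I would control the number of partitions. The number of set partitions of a $k$-set into blocks of size at most $t$ is at most (a standard bound, presumably one of the ``asymptotic set partition bounds'' the section promises to develop) something of the form $\exp(O(k\log k))$ or, more usefully, $k!/(\text{product of block-size factorials})\cdot(\text{number of block-size profiles})$, and the key point is that this count is sub-exponential relative to the main term $q^{e(P)}$ as long as $e(P)$ is of order $k^2$ or at least $k\cdot\omega(1)$. Combining, $\Exp(|\mathcal{S}_{n,t,k}|)\le \binom{n}{k}\cdot\exp(o(e(P)))\cdot q^{e(P)^{\min}}$ where $e(P)^{\min}\approx t(k-t)$ (or the appropriate optimum). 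Using $\binom{n}{k}\le (en/k)^k = \exp(k\log(n/k)+O(k))$ and $q^{e(P)} = \exp(-e(P)\log b)$, the exponent becomes roughly
\begin{align*}
k\log\frac nk + O(k) - t(k-t)\log b + o\!\left(t(k-t)\right),
\end{align*}
and the hypothesis $k\ge t+\frac{k}{t}\log_b\frac{np}{pt+\log np}+\frac{5}{\log b}$ is exactly calibrated so that, after dividing through by $\log b$ and writing everything in base-$b$ logarithms, this exponent is at most $-t$. The appearance of $pt+\log np$ rather than just $1$ in the hypothesis suggests that the union bound above is slightly lossy and one should instead use a sharper estimate for the probability that a block $B_i$ of size $s$ induces a \emph{connected} subgraph with no edges leaving it — connectivity of $\G{s}{p}$ forces at least $s-1$ internal edges, and combined with the no-crossing-edges event this replaces the naive $q^{e(P)}$ by roughly $q^{e(P)}\cdot p^{\,r-?}\cdot(\text{correction})$; more to the point, the term $\log np$ accounts for the requirement that an isolated-looking block actually attaches to nothing outside $S$ as well, i.e.\ a factor $q^{s(n-k)}$ per vertex is implicitly being balanced against the $\binom nk$ — so I would track that factor as well. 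I would then simply verify the inequality by substituting $k\le 2t$, $k\ge t$, and the lower bound on $k$ from the hypothesis, checking the arithmetic reduces to $-t$.

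The main obstacle I anticipate is \textbf{obtaining the crossing–non-edge count with the right constant and the correct ``$+\frac{5}{\log b}$'' slack while simultaneously keeping the set-partition count under control}: one must argue that the worst-case partition (the one minimising the exponent of $q$) is the near-balanced/near-extreme profile identified above, that all other profiles contribute a negligible fraction, and that the combinatorial multiplicity $\exp(O(k\log k))$ of partitions is genuinely dominated — this last point is delicate precisely because $t=O(\log_b np)$ is allowed to be small, so $k$ can be of order $\log n$ and $\exp(O(k\log k))$ is then $\exp(O(\log n\log\log n))$, which must be beaten by $q^{t(k-t)}=\exp(-\Theta(t(k-t))\log b)$; ensuring $t(k-t)\log b$ dominates $k\log k$ is what forces the precise form of the hypothesis (in particular the $\frac{k}{t}\log_b(\cdot)$ term, which is where the $\log k$ losses are absorbed). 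I would handle this by isolating the optimisation over block-size profiles as a separate convexity computation and then feeding the winning profile into the expectation bound, leaving the verification that competing profiles lose by a polynomial-in-$n$ factor as a short estimate.
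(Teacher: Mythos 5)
Your opening reduction --- a union bound over \emph{all} set partitions of the $k$-set into blocks of size at most $t$, paying for their number and charging only the worst-case probability $q^{e(P)^{\min}}$ --- cannot deliver the lemma as stated. The hypothesis on $k$ leaves only $\Theta(t)$ of slack in the exponent: after writing $\binom nk\le(en/k)^k$, i.e.\ $k\log np-k\log(pk)+O(t)$, and using $pk\ge pt+\log np$ (which is itself forced by the hypothesis), the term $t(k-t)\log b$ cancels the main contribution with roughly $1.3t$ to spare --- the constant $5/\log b$ is calibrated to absorb a multiplicity of size $e^{O(t)}$ only. Your partition count costs $\log\mathcal{SP}_{t,k}=\Theta(k\log k)=\Theta(t\log t)\gg t$ (already matchings into blocks of size $2$ give this), so even for fixed $p$ the bound $\Exp(|\mathcal{S}_{n,t,k}|)\le e^{-t}$ with the stated constant is out of reach by this route; the hoped-for absorption of the $\log k$ losses into the $\frac kt\log_b(\cdot)$ term is not available, since that term is exactly consumed by the $\binom nk$-versus-$q^{t(k-t)}$ balance. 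Worse, the loss is not just bookkeeping: the lemma allows $p=p(n)\to0$ with $np\to\infty$, and there the weighted sum $\binom nk\sum_P q^{e(P)}$ genuinely diverges even if you optimise over block profiles exactly. For example with $p=n^{-0.9}$, $t\sim3\log_b np$ and $k$ at the hypothesis boundary ($k\approx\tfrac32t$, $\log k\approx0.9\log n$, $kp\approx0.45\log n$), the single profile of $k/2$ blocks of size $2$ contributes about $\exp\bigl(k\log\tfrac nk+\tfrac k2\log k-\tfrac{k^2}2\log b\bigr)=\exp\bigl((0.1+0.45-0.225+o(1))k\log n\bigr)\to\infty$: its extra crossing non-edges relative to the two-block profile are worth only about $0.13\,k\log n$, while its multiplicity is worth about $0.45\,k\log n$. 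So no care with ``competing profiles'' rescues the all-partitions union bound; the union bound itself is too lossy. (Your side guess about the origin of $pt+\log np$ --- connectivity of blocks or non-edges leaving $S$ --- is also off: no such effects enter the upper bound; that expression only serves as a lower bound for $pk$ when controlling the $-k\log(pk)$ term.)

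The paper's proof avoids all of this by exploiting the standing assumption $t\ge k/2$: the components of a $t$-component $k$-set can be merged greedily into just \emph{two} parts with no edges between them, one part having $i$ vertices with (roughly) $k/2\le i\le t$. The union bound is then only over bipartitions, with multiplicity $\sum_i\binom ki\le t\binom kt=e^{O(t)}$, which the $5/\log b$ slack does absorb; a monotonicity argument in $i$ (the ratio of consecutive terms $\binom ki q^{i(k-i)}$ crosses $1$ at $i\approx k/2$) shows the dominant term is $\binom kt q^{t(k-t)}$, and the conclusion follows from the calibration described above. If you want to keep a partition-based count, that is the strategy of the paper's small-$t$ lemmas, where the hypothesis on $k$ contains explicit compensating terms ($-2\log_b t-\tfrac{2\log_b\log_b np}{t}$, etc.) precisely to pay for $\mathcal{SP}_{t,k}$; here no such terms are present, and the bipartition coarsening is the missing idea.
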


\begin{proof}
We estimate the probability contribution of all $t$-component $k$-sets by classifying them according to partitions of $[k]$ so that there are no edges between any pair of parts.
Naturally, we could first consider the component structure as such a partition (ignoring what happens inside each component). However, we find it convenient to simplify our accounting by taking coarser partitions.
For a given $t$-component $k$-set, we group the connected components into possibly larger vertex subsets as follows.
We form a first such set $X_1$ by including just the largest component, unless it has at most $t/2$ vertices, in which case we add just the second largest component to the group, unless the resulting group has at most $t/2$ vertices, and so on. Then we form a second set $X_2$ in a similar way with the remaining components.
After this second grouping, all the remaining components (if there are any) are grouped into a third set $X_3$.
By construction, $t/2 \le |X_1|,|X_2| \le t$ and since $t\ge k/2$ we have that $|X_3| \le k-t\le t$.

From the above discussion, to upper bound the expectation of $|\mathcal{S}_{n,t,k}|$ it suffices to upper bound that of the number $k$-sets of $[n]$ that induce a partition of $[k]$ with part sizes $k_1$ (possibly $0$), $k_2$ and $k_3$ such that $0\le k_1\le k_2 \le k_3 \le t$, $k_1 \le k-t$ and $k_2 \ge t/2$, and with no edges between any two parts.
Since $k_3 = k-k_1-k_2$, the total number of non-edges between parts is expressed by
\begin{align*}
f(k_1,k_2)
:= k_1k_2 + (k_1+k_2)(k-k_1-k_2)
= k(k_1+k_2)-k_1^2-k_1k_2-k_2^2.
\end{align*}
In the following optimisation, we show that under the above constraints $f(k_1,k_2) \ge t(k-t)$ always.
For $k_1$ fixed with $0 \le k_1 \le k-t$, $f(k_1,k_2)$ is non-negative and concave in $k_2$ for $0 \le k_2 \le k-k_1$, and so minimised by evaluating at extreme values for $k_2$. The properties of the partition imply that $\max\{k_1,t/2,k-t-k_1\} \le k_2 \le (k-k_1)/2$.

Consider the three-term maximisation for the lower extreme of $k_2$.
Using $t\ge k/2$, observe that $k-3t/2 \le k/2-t/2\le t/2$.
Note $k_1\ge k-t-k_1$ is equivalent to $k_1 \ge k/2-t/2$, while $t/2\ge k-t-k_1$ is equivalent to $k_1 \ge k-3t/2$. These observations imply that the maximisation is attained by
\begin{enumerate}
\itemsep0pt
\item\label{itm:i}
$k-t-k_1$ if $k_1\le k-3t/2$,
\item\label{itm:ii}
$t/2$ if $k-3t/2 \le k_1 \le t/2$, and
\item\label{itm:iii}
$k_1$ if $k_1\ge t/2$.
\end{enumerate}

For case~\ref{itm:i}, $f(k_1,k-t-k_1) = t(k-t)+(k-t)k_1-k_1^2$ is concave in $k_1$ and so minimised over $0 \le k_1\le k-3t/2$ at $k_1 = 0$ or $k_1 = k-3t/2$.
In the former case we have $f(0,k-t) = t(k-t)$.
In the latter we get $f(k-3t/2,t/2) = t(k-t)+(2k-3t)/4$, which is at least $t(k-t)$ as long as $k-3t/2\ge 0$ (and otherwise case~\ref{itm:i} is vacuous).

For case~\ref{itm:ii}, $f(k_1,t/2) = (k-t/2)t/2+(k-t/2)k_1-k_1^2$ is concave in $k_1$ and so minimised over $\max\{k-3t/2,0\} \le k_1 \le t/2$ at $k_1 = \max\{k-3t/2,0\}$ or $k_1 = t/2$.
In the former case we already checked $f(k-3t/2,t/2) \ge t(k-t)$ as long as $k-3t/2\ge 0$; otherwise, we have $f(0,t/2)= (k-t/2)t/2$, which is at least $t(k-t)$ for $k-3t/2\le 0$.
In the latter case we get $f(t/2,t/2) = (k-3t/4)t > t(k-t)$.

For case~\ref{itm:iii}, $f(k_1,k_1) = 2kk_1-3k_1^2$ is concave in $k_1$ and so minimised over $t/2 \le k_1 \le k/3$ at $k_1 = t/2$ or $k_1 = k/3$. In the former case we already checked that $f(t/2,t/2) > t(k-t)$.
In the latter case we get $f(k/3,k/3) = k^2/3 > t(k-t)$.

For the upper extreme of $k_2$, we evaluate $f(k_1,(k-k_1)/2) = k^2/4+(k/2)k_1 -(3/4)k_1^2$.
This is concave in $k_1$ and so minimised over $0\le k_1 \le k/3$ when $k_1 = 0$ or $k_1 = k/3$.
In the former case we have $f(0,k/2) = k^2/4 \ge t(k-t)$.
In the latter case we get $f(k/3,k/3) > t(k-t)$.

This completes the optimisation to check that in all such partitions the total number $f(k_1,k_2)$ of non-edges between parts is at least $t(k-t)$.
As there are crudely at most $3^k$ such partitions of $[k]$, we obtain
\begin{align}\label{eqn:star}
\Exp(|\mathcal{S}_{n,t,k}|)
 \le \binom{n}{k} 3^k q^{t(k-t)}
 \le  \left(\frac{en}{k}\right)^k \cdot3^k q^{t(k-t)},
\end{align}
using $\binom{x}{y} \le (e x/y)^y$.
Taking the logarithm and dividing by $t$, we get for $n$ large enough that
\begin{align*}
\frac{\log \Exp(|\mathcal{S}_{n,t,k}|)}{t}
& \le \frac{k}{t} \log \left(\frac{en}{k}\right) + \frac{k}{t}\log 3 - (k-t)\log b\\
& \le \frac{k}{t} \log np - \frac{k}{t} \log pk +4.2 - (k-t)\log b,
\end{align*}
since $k/t \le 2$ and $\log 2/t \to 0$.
Now, the assumed lower bound on $k$ implies both that
\begin{align*}
(k-t)\log b \ge \frac{k}{t}\log np - \frac{k}{t}\log(pt + \log np) + 6
\end{align*}
and $pk \ge pt + \log np$. 
(The last inequality can be seen by first noting that $k \ge t+(1+o(1))\log_b np$, 
so that $k/t-1 = \Omega(1)$, and then applying the inequality again to obtain 
$pk \ge pt + (1+o(1))\frac{k}{t}\log \frac{np}{\log np} \ge pt + \log np$ for $n$ large enough.)
We then have $\log \Exp(|\mathcal{S}_{n,t,k}|) \le -t$ for $n$ large enough, as required.
\end{proof}

Moreover, the following holds by a similar argument.
Note that it can be verified in the case $\tau > 2$, corresponding to 
$\lfloor\kappa/\tau\rfloor=\lfloor1+1/(\tau-1)\rfloor=1$, that 
$\iota(\tau,\kappa+\eps) = \kappa+\eps-\tau(\kappa+\eps-\tau)$ provided that $\eps>0$ 
is small enough.

\begin{proposition}\label{prop:exp,mediumt,upper,1}
Suppose $p = p(n)$ satisfies $0 < p < 1$ and $n p \to \infty$ as $n \to \infty$,  and $\eps>0$ is a small enough constant.
Suppose $t=t(n)$ and $k = k(n)$ satisfy as $n \to \infty$ that $t \sim \tau \log_b np$ and 
$k \sim (\kappa+\eps) \log_b np$, where $\tau,\kappa>0$ satisfy $\tau > 2$ and $\iota(\tau,\kappa)=0$.
Then $\Exp(|\mathcal{S}_{n,t,k}|) \le \exp((1+o(1))\iota(\tau,\kappa+\eps)(\log np)^2/\log b)$.
\end{proposition}

\begin{proof}
Since $\eps>0$ can be chosen small and $n$ taken large enough, we may assume based on $\tau > 2$ and $\iota(\tau,\kappa)=0$ that $t \ge k/2$.
Following the proof of Lemma~\ref{lem:exp,larget,upper}, and since $\log b = \Theta(p)$, we obtain
\begin{align*}
\log \Exp(|\mathcal{S}_{n,t,k}|)
& \le k \log \left(\frac{en}{k}\right) + k\log 3 - t(k-t)\log b
 = k\log n p - t(k-t) \log b + o(k\log n p) \\
& \sim (\kappa+\eps-\tau(\kappa+\eps-\tau))\frac{(\log np)^2}{\log b}
 = \iota(\tau,\kappa+\eps)\frac{(\log np)^2}{\log b}. \qedhere
\end{align*}
\end{proof}

For the next first moment upper bounds, we require a bound on the number $\mathcal{SP}_{t,k}$ of set partitions 
of $[k]$ with block sizes at most $t$.  An easy application of the 
saddle-point method from analytic combinatorics, cf.~Flajolet and Sedgewick~\cite{FlSe09}, suffices. 
The proof of the following can be found in the appendix.

\begin{proposition}\label{prop:SPtk}
If $t \le \log k$, then for $k$ large enough
\begin{align*}
\mathcal{SP}_{t,k} \le \exp\left(k\log k-\frac{k}{t} \log k - k\log t + 3k\right).
\end{align*}
\end{proposition}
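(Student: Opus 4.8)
The plan is to count set partitions of $[k]$ with all block sizes at most $t$ via the saddle-point method. The exponential generating function for a single block of size between $1$ and $t$ is $E_t(z) = \sum_{j=1}^t z^j/j!$, and since a set partition is an unordered collection of nonempty blocks, the EGF for all such set partitions is $\exp(E_t(z))$. Thus $\mathcal{SP}_{t,k} = k!\,[z^k]\exp(E_t(z))$, and by Cauchy's formula $\mathcal{SP}_{t,k} = \frac{k!}{2\pi i}\oint \frac{\exp(E_t(z))}{z^{k+1}}\,dz$. The standard saddle-point bound gives, for any $r>0$,
\[
\mathcal{SP}_{t,k} \le \frac{k!\,\exp(E_t(r))}{r^k}.
\]
So the core of the proof is to choose $r$ suitably and estimate $k!$, $r^k$, and $E_t(r)$.

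Next I would choose the saddle point. For the full Bell-number case ($t=\infty$) the saddle satisfies $r e^r = k$, i.e. $r \sim \log k/\log\log k$; here, with the truncation at $t \le \log k$, the dominant term of $E_t(r)$ is the top one, $r^t/t!$, and the relevant saddle-type equation is roughly $r \cdot r^{t-1}/(t-1)! \approx k$, i.e. $r^t/t! \approx k/t$. Rather than solve this exactly, I would simply \emph{set} $r$ so that $r^t/(t!) = k/t$, or equivalently pick $r = (k\,(t-1)!/1)^{1/t}$ up to constants — concretely $r := (k/t \cdot t!)^{1/t}$ works cleanly — and then just plug this $r$ into the bound above, since we only need an upper bound, not the precise asymptotics. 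With this choice, $\log(r^k) = \frac{k}{t}\log(k\,t!/t) = \frac{k}{t}\log k + \frac{k}{t}\log t! - \frac{k}{t}\log t$, and using $\log t! = t\log t - t + O(\log t)$ this is $\frac{k}{t}\log k + k\log t - k - \frac{2k}{t}\log t + O(\frac{k}{t}\log t)$.

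Then I would bound $E_t(r)$. Since the terms $r^j/j!$ of $E_t(r)$ grow geometrically towards $j=t$ as long as $r \ge $ some constant (the ratio of consecutive terms is $r/(j+1) \ge r/t$, which for our $r$ and $t\le\log k$ is large), the whole sum is within a constant factor of its last term: $E_t(r) \le C\, r^t/t! = C k/t$ for a constant $C$. In fact one can be cruder and note $E_t(r) \le e^r$, but the point is $E_t(r) = O(k/t) = O(k)$, which is absorbed into the $3k$ slack. Finally, $\log k! = k\log k - k + O(\log k)$ by Stirling. Assembling,
\[
\log \mathcal{SP}_{t,k} \le \log k! - k\log r^{1}\cdot k \ \text{(symbolically)}\ = k\log k - k - \tfrac{k}{t}\log k - k\log t + k + \tfrac{2k}{t}\log t + O(k),
\]
and the leading terms are $k\log k - \frac{k}{t}\log k - k\log t$, with everything else (the $O(k)$, the $\frac{2k}{t}\log t = o(k)$ term, the Stirling errors, and $E_t(r)=O(k/t)$) comfortably absorbed into $+3k$ for $k$ large. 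This yields exactly the claimed bound $\mathcal{SP}_{t,k}\le \exp(k\log k - \frac{k}{t}\log k - k\log t + 3k)$.

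The main obstacle — really the only point requiring care — is verifying that the truncated exponential $E_t(r)$ is dominated by its top term at the chosen $r$, i.e. that the subdominant terms do not contribute more than $O(k)$ in the exponent. This hinges on the hypothesis $t \le \log k$: it guarantees that $r$ (roughly $(k/t)^{1/t}$, hence at least polylogarithmic in $k$ and in particular $\to\infty$) is much larger than $t$, so the ratio $r/(j+1)$ of successive terms in $E_t$ exceeds $1$ by a wide margin throughout the range $j\le t$, making the series essentially geometric. If instead $t$ were allowed to be much larger than $\log k$, the truncation would be ineffective, $r$ could be $O(1)$ or smaller, and this geometric-domination argument would fail — which is precisely why the hypothesis $t \le \log k$ appears. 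A secondary bookkeeping point is tracking the Stirling-error and constant-order terms carefully enough to confirm they fit inside the stated $+3k$; this is routine but should be done explicitly.
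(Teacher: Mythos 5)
Your overall route is essentially the paper's: express $\mathcal{SP}_{t,k}$ through the truncated-exponential EGF and bound the $z^k$ coefficient by a saddle-point-type estimate. The only structural difference is that the paper works at the exact saddle point $rE_t'(r)=k$ (which yields the bound $E_t(r)\le\sum_{i=1}^t r^i/(i-1)!=k$ for free, and lets it invoke the asymptotic coefficient formula), while you use the elementary inequality $[z^k]\exp(E_t(z))\le \exp(E_t(r))/r^k$ with an explicitly chosen near-saddle $r$ satisfying $r^t/t!=k/t$. That is a perfectly legitimate, arguably cleaner, variant for an upper bound --- but it shifts the burden onto bounding $E_t(r)$ by hand, and that is where your argument as written breaks.

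The flawed step is the claim that $r$ is ``much larger than $t$'', so that the terms of $E_t(r)$ are geometrically dominated by the top one and $E_t(r)\le C\,r^t/t!=O(k/t)$. With your choice $r^t=k\,(t-1)!$, Stirling gives $r\approx (t/e)\,k^{1/t}$, so $r/t\approx k^{1/t}/e$; this is large only when $t=o(\log k)$. When $t=\Theta(\log k)$ (the regime where the hypothesis $t\le\log k$ is tight), $k^{1/t}=O(1)$ and $r$ is of order $t$ --- it can even fall below $t$ --- so the ratio $r/(j+1)$ of consecutive terms near $j=t$ is not bounded away from $1$, the series is not geometric, and $E_t(r)$ is genuinely larger than $O(k/t)$ (at $t\sim\log k$ it is of order $k/\sqrt{t}$). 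Also, $r$ is not ``roughly $(k/t)^{1/t}$'': you dropped the factor $(t!)^{1/t}\approx t/e$, which is exactly what makes $r\approx t$ at the boundary. Fortunately the conclusion you actually need, $E_t(r)\le k$, is still true and has a short proof by cases: if $r\ge t$ the summands $r^j/j!$ are nondecreasing on $1\le j\le t$, so $E_t(r)\le t\cdot r^t/t!=k$; if $r<t\le\log k$ then $E_t(r)<e^r\le e^t\le k$. A second, smaller slip: $\tfrac{2k}{t}\log t$ is not $o(k)$ for constant $t\ge 2$; it is only bounded by $2k/e$. With $E_t(r)\le k$, this term, and the $O(\log k)$ Stirling errors, the total additive error is still comfortably below $3k$, so your bound survives --- but the domination claim must be replaced by the case analysis above (or by working at the exact saddle point, as the paper does, where $E_t(r)\le k$ is automatic).
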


Note that the size of a largest part in a randomly chosen set partition of $[k]$ is 
$(1+o(1))\log k$, cf.~\cite{FlSe09}. Thus, if $t > \log k$, we instead appeal to a 
general asymptotic bound for set partitions, cf.~\cite[Proposition~VIII.3]{FlSe09}, 
which implies that
\begin{align}\label{eqn:SPtk}
\mathcal{SP}_{t,k} \le \mathcal{SP}_{k,k}
 \le (1+ o(1))\frac{k!}{(\log k)^k}
 = \exp(k \log k - k\log\log k - k +o(k)).
\end{align}
The following two bounds are consequences of these set partition estimates.

\begin{lemma}\label{lem:exp,smallt,upper,1}
Suppose $p = p(n)$ satisfies $0 < p < 1$.
Suppose $t=t(n)$ and $k = k(n)$ satisfy  as $n \to \infty$ that $t\le \log \log_b np$ and
\begin{align*}
k \ge 2\log_b n + t -2\log_b t- \frac{2\log_b \log_b np}t + \frac{10}{\log b}.
\end{align*}
Then $\Exp(|\mathcal{S}_{n,t,k}|) \le \exp(-k)$ for $n$ large enough.
\end{lemma}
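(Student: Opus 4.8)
The plan is to bound $\Exp(|\mathcal{S}_{n,t,k}|)$ by the union bound over all $k$-subsets of $[n]$, all set partitions of the chosen $k$ vertices into blocks of size at most $t$, and all ways the induced subgraph could realise that partition as its component structure; the last constraint forces the absence of every edge between distinct blocks. Concretely, if a partition of $[k]$ has blocks of sizes $s_1, \ldots, s_m$ (each $s_j \le t$), then the number of non-edges forced between blocks is $\binom{k}{2} - \sum_j \binom{s_j}{2}$, and this is minimised (giving the largest probability, hence the dominant contribution) when the blocks are as large as possible, i.e.\ as many blocks of size $t$ as possible. So the first step is to write
\begin{align*}
\Exp(|\mathcal{S}_{n,t,k}|) \le \binom{n}{k}\, \mathcal{SP}_{t,k}\, q^{\binom{k}{2} - \lceil k/t\rceil\binom{t}{2}},
\end{align*}
using that $\sum_j \binom{s_j}{2} \le \lceil k/t\rceil \binom{t}{2}$ whenever all $s_j \le t$ (convexity plus the block-size cap), and then take logarithms.

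Next I would feed in the estimates already available. Since $t \le \log\log_b np$ we are comfortably in the regime $t \le \log k$ (because $k \asymp \log_b n$ forces $\log k \asymp \log\log n \gg \log\log_b np \ge t$), so Proposition~\ref{prop:SPtk} applies and gives $\log \mathcal{SP}_{t,k} \le k\log k - \frac{k}{t}\log k - k\log t + 3k$. Combined with $\log\binom{n}{k} \le k\log n - k\log k + k$ and $-\big(\binom{k}{2} - \lceil k/t\rceil\binom{t}{2}\big)\log b = -\big(\frac{k^2}{2} - \frac{k}{2} - \frac{kt}{2} + O(k) \big)\log b$ (expanding $\lceil k/t\rceil\binom{t}{2}$ and absorbing the rounding into an $O(k)$ term since $t\binom{t}{2}/2 = O(t^2) = o(k)$), the leading $k\log k$ terms cancel and I am left with something of the shape
\begin{align*}
\log \Exp(|\mathcal{S}_{n,t,k}|) \le k\log n - \frac{k}{t}\log k - k\log t - \frac{\log b}{2}\big(k^2 - k - kt\big) + O(k).
\end{align*}
Now divide through by $k$ and substitute the hypothesised lower bound on $k$. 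Writing $k = 2\log_b n + t - 2\log_b t - \frac{2\log_b\log_b np}{t} + c/\log b$ with $c \ge 10$, the term $-\frac{\log b}{2}k$ contributes $-\log n - \frac{t\log b}{2} + \log t + \frac{\log\log_b np}{t} - c/2$ per unit $k$, which is designed to cancel $\log n$, $-\frac{t\log b}{2}$ comes from $+\frac{\log b}{2}t$, and $\log t$ cancels $-\log t$; the surviving pieces are $-\frac{1}{t}\log k + \frac{\log\log_b np}{t}$, and since $\log k = \log(2\log_b n + O(t)) = \log\log_b n + O(1) = \log\log_b np + \log b + O(1)$ (for fixed $p$), the $\frac1t$ terms nearly cancel too, leaving a controlled $O(1/t)$ error, and finally the constant $-c/2 \le -5$ dominates the remaining $O(1)$ slack. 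Tracking constants carefully should yield $\frac1k \log \Exp(|\mathcal{S}_{n,t,k}|) \le -1$ for $n$ large, i.e.\ $\Exp(|\mathcal{S}_{n,t,k}|) \le \exp(-k)$.

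The main obstacle is bookkeeping precision rather than any conceptual difficulty: I must keep track of all the additive $O(1)$ and $O(k)$ terms arising from (i) the $\lceil\cdot\rceil$ rounding in $\lceil k/t\rceil\binom{t}{2}$, (ii) the $+3k$ and $+k$ slack in the set-partition and binomial bounds, (iii) the approximation $\log k \approx \log\log_b np + \log b$, and (iv) the interaction between $t$ being as small as $1$ and as large as $\log\log_b np$, ensuring the constant $10/\log b$ in the hypothesis is genuinely enough to beat their sum uniformly. A secondary point to verify is that $t \le \log k$ really does hold throughout the stated range so that Proposition~\ref{prop:SPtk} is legitimately invoked; if $k$ could be as small as $\Theta(\log\log n)$ this would need care, but the hypothesis implicitly forces $k \ge (2+o(1))\log_b n$, so there is ample room. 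I would organise the constant-chasing into one displayed chain of inequalities, pulling the worst case of each rounding/slack term, and conclude.
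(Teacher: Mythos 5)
Your plan is, in outline, exactly the paper's proof: bound $\Exp(|\mathcal{S}_{n,t,k}|)$ by $\binom{n}{k}\cdot\mathcal{SP}_{t,k}\cdot q^{(\text{forced cross-block non-edges})}$, check $t\le\log k$ so that Proposition~\ref{prop:SPtk} applies, and substitute the hypothesised lower bound on $k$. The difficulty is that the lemma's whole content is the constant $10/\log b$, and the slack budget closes with equality: per unit $k$, the factor $\binom{n}{k}\le(en/k)^k$ costs $+1$, Proposition~\ref{prop:SPtk} costs $+3$, and substituting the lower bound on $k$ contributes $-5$, so you land on exactly the required $-1$ with no room for any further positive loss. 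Your sketch introduces two such losses. First, you undercount the forced non-edges: the minimum over partitions with blocks of size at most $t$ is $\binom{\hat k/t}{2}t^2+\hat k(k-\hat k)$ with $\hat k=t\lfloor k/t\rfloor$, i.e.\ $\binom{k}{2}-\lfloor k/t\rfloor\binom{t}{2}-\binom{k-\hat k}{2}$, whereas you use $\binom{k}{2}-\lceil k/t\rceil\binom{t}{2}$, which is smaller by up to $\binom{t}{2}$; this adds $+\binom{t}{2}\log b/k$ per unit $k$. For fixed $p$ this is $o(1)$, so as written you obtain only $\Exp(|\mathcal{S}_{n,t,k}|)\le\exp(-(1-o(1))k)$ rather than the stated $\exp(-k)$; and under the lemma's actual hypotheses ($p=p(n)$ arbitrary in $(0,1)$, so $\log b$ may grow, e.g.\ $\log b\asymp\sqrt{\log n}$ with $t=2$) this loss need not even be $o(1)$. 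Second, you replace $\log k$ by $\log\log_b np+\log b+O(1)$; besides the spurious $+\log b$, any two-sided approximation here charges an $O(1/t)$ error (which is $O(1)$ when $t=1$) against a budget that has none. What is needed, and what the paper uses, is only the one-sided inequality $\log k\ge\log\log_b np$, which costs nothing and follows from $k\ge\log_b np$ (check it at $t=1$, then use monotonicity in $t$ of the assumed lower bound on $k$).

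Both defects are easy to repair: use the exact extremal non-edge count and discard only its non-negative correction $(k-\hat k)\bigl(t-(k-\hat k)\bigr)/k$, and use the one-sided bound on $\log k$. Then your computation reproduces the paper's chain $\frac{2}{k}\log\Exp(|\mathcal{S}_{n,t,k}|)\le 2\log n-(k-t)\log b-\frac{2\log\log_b np}{t}-2\log t+8\le-2$, which is the stated conclusion. But as proposed, the bookkeeping you describe does not deliver the lemma with its stated constant, and the assertion that the $-5$ "dominates the remaining $O(1)$ slack" hides precisely the step where the argument is tight.
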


\begin{lemma}\label{lem:exp,smallt,upper,2}
Suppose $p = p(n)$ satisfies $0 < p < 1$.
Suppose $t=t(n)$ and $k = k(n)$ satisfy  as $n \to \infty$ that $t = o(\log_b n)$ and
\begin{align*}
k \ge 2\log_b n + t -2\log_b \log \log_b n + \frac{3}{\log b}.
\end{align*}
Then $\Exp(|\mathcal{S}_{n,t,k}|) \le \exp(-k)$ for $n$ large enough.
\end{lemma}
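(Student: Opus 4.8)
The plan is to mimic the first-moment computation of Lemma~\ref{lem:exp,smallt,upper,1}, but now using the generic set-partition bound~\eqref{eqn:SPtk} in place of Proposition~\ref{prop:SPtk}, since the regime $t = o(\log_b n)$ forces $k = \Theta(\log_b n)$, so $t$ is far below $\log k \sim \log\log_b n$ only when $t$ itself is small — actually the relevant point is that here we make no attempt to exploit $t \le \log\log_b np$, and the cruder bound $\mathcal{SP}_{t,k} \le \mathcal{SP}_{k,k}$ suffices. First I would write
\begin{align*}
\Exp(|\mathcal{S}_{n,t,k}|) \le \binom{n}{k}\,\mathcal{SP}_{t,k}\, q^{e(k,t)},
\end{align*}
where $e(k,t)$ is a lower bound on the number of non-edges forced among the $k$ chosen vertices by the requirement that every component have order at most $t$. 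The standard bound (as used implicitly elsewhere in Section~\ref{sec:exp}) is that a $t$-component $k$-set has at least roughly $\binom{k}{2} - \frac{k}{2}(t-1)$ non-edges: each component of order $s$ contributes at least $\binom{s}{2}$ edges but we only get to subtract those, so the number of non-edges is at least $\binom{k}{2} - \sum_j \binom{s_j}{2}$, and $\sum_j \binom{s_j}{2}$ is maximised when the $s_j$ are as large as possible, i.e.\ all equal to $t$, giving $\sum_j \binom{s_j}{2} \le \frac{k}{t}\binom{t}{2} = \frac{k(t-1)}{2}$. Hence $e(k,t) \ge \binom{k}{2} - \frac{k(t-1)}{2} = \frac{k(k-t)}{2}$.

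Next I would take logarithms, using $\binom{n}{k} \le (en/k)^k$ and the estimate $\mathcal{SP}_{t,k} \le \exp(k\log k - k\log\log k - k + o(k))$ from~\eqref{eqn:SPtk}, to get
\begin{align*}
\log \Exp(|\mathcal{S}_{n,t,k}|)
\le k\log\frac{en}{k} + k\log k - k\log\log k - k + o(k) - \frac{k(k-t)}{2}\log b.
\end{align*}
The two $k\log k$ terms cancel against $-k\log k$ hidden in $\log(en/k)^k = k\log n - k\log k + k$, leaving
\begin{align*}
\log \Exp(|\mathcal{S}_{n,t,k}|)
\le k\log n - k\log\log k + o(k) - \frac{k(k-t)}{2}\log b.
\end{align*}
Dividing by $k$, the claim $\log \Exp(|\mathcal{S}_{n,t,k}|) \le -k$, i.e.\ $\le -k\log b/\log b$, reduces to showing
\begin{align*}
\frac{k-t}{2}\log b \ge \log n - \log\log k + 1 + o(1).
\end{align*}
Since $k \ge 2\log_b n + t - 2\log_b\log\log_b n + 3/\log b$, we have $(k-t)/2 \ge \log_b n - \log_b\log\log_b n + 3/(2\log b)$, so $\frac{k-t}{2}\log b \ge \log n - \log\log\log_b n + 3/2$. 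It remains to check $\log\log k \ge \log\log\log_b n + o(1)$: this holds because $k = \Theta(\log_b n)$ (here is where $t = o(\log_b n)$ enters, ensuring $k = (2+o(1))\log_b n$ rather than something larger that would only help), so $\log k = \log\log_b n + O(1)$ and $\log\log k = \log\log\log_b n + o(1)$. Feeding this back in gives $\log\Exp(|\mathcal{S}_{n,t,k}|) \le -k(1 + o(1)) \le -k$ for $n$ large, once one notes the $o(k)$ error from~\eqref{eqn:SPtk} is genuinely $o(k)$ and the slack $3/2 - 1 = 1/2$ absorbs it.

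The main obstacle, such as it is, is bookkeeping: one must be careful that the $o(k)$ term from the generic set-partition bound~\eqref{eqn:SPtk} is dominated by the constant slack built into the hypothesis on $k$ (the $3/\log b$), and that the non-edge count $\frac{k(k-t)}{2}$ is legitimate even when components are small — but as noted it is only improved by small components, so the bound is safe. A secondary point to verify is that $k$ is indeed $\Theta(\log_b n)$: the lower hypothesis gives $k \gtrsim 2\log_b n$, and for the upper direction one may assume $k \le Cn/t$ or simply restrict attention to the smallest admissible $k$ (larger $k$ only decreases $\Exp(|\mathcal{S}_{n,t,k}|)$ once past the first-moment threshold, which can be checked by monotonicity of the ratio of consecutive terms as in Lemma~\ref{lem:exp,larget,upper}). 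No genuinely new idea beyond Lemma~\ref{lem:exp,smallt,upper,1} is needed — only the substitution of the coarser set-partition estimate valid for all $t$.
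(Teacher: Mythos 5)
Your proposal is correct and takes essentially the same route as the paper's proof: a union bound over set partitions with blocks of size at most $t$ using the generic estimate~\eqref{eqn:SPtk} in place of Proposition~\ref{prop:SPtk}, a minimum cross-block non-edge count of $k(k-t)/2$ (the paper keeps the rounding term via $\hat k=t\lfloor k/t\rfloor$ but discards the same lower-order correction at the analogous step), and then substitution of the assumed lower bound on $k$. One minor remark: your detour through ``$k=\Theta(\log_b n)$'' is both unjustified (the hypotheses impose no upper bound on $k$) and unnecessary --- the only estimate you need is the one-sided inequality $\log\log k \ge \log\log\log_b n$, which follows from $k\ge\log_b n$ alone, exactly as the paper uses it.
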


\begin{proof}[Proof of Lemma~\ref{lem:exp,smallt,upper,1}]
Let us define $\hat{k} = t\lfloor k/t\rfloor$.
Any $t$-component $k$-set induces a set partition of $[k]$ into blocks of size at most $t$, 
such that there is no edge between vertices of two different blocks.
The total number of non-edges among the blocks is minimised by having the least number $\hat{k}/t+1$ of blocks with all but one of the blocks having size exactly $t$.
Such a partition has at least $\binom{\hat{k}/t}{2} t^2 + \hat{k}(k-\hat{k})$ non-edges.

We have $t \le \log \log_b np \le \log k$.
(To see this, note that it holds for $t=1$, then use monotonicity in $t$ of the bound on $k$.)
Thus, using Proposition~\ref{prop:SPtk} and $\binom{x}{y} \le (e x/y)^y$, we have for $n$ large enough
\begin{align*}
\Exp(|\mathcal{S}_{n,t,k}|)
& \le \binom{n}{k} q^{\binom{\hat{k}/t}{2} t^2 + \hat{k}(k-\hat{k})} \mathcal{SP}_{t,k}\\
& \le \left(\frac{en}{k}\right)^k q^{\frac{\hat{k}(\hat{k}-t)}{2} + 
\hat{k}(k-\hat{k})} \exp\left(k\log k-\frac{k}{t} \log k - k\log t + 3k\right).
\end{align*}
Taking the logarithm, dividing by $k/2$, substituting $\log k \ge \log \log_b np$, and simplifying, we get
\begin{align*}
&\frac{2\log \Exp(|\mathcal{S}_{n,t,k}|)}{k} \\
& \le 2\log n -\left(k-t+\frac{(k-\hat{k})(t-(k-\hat{k}))}{k}\right)\log b-
\frac{2\log \log_b np}{t}-2\log t+8 \\
& \le 2\log n -(k-t)\log b-\frac{2\log \log_b np}{t}-2\log t+8.
\end{align*}
The second inequality above follows from the fact that $0 \le k-\hat{k} < t$.
Substituting the assumed lower bound on $k$, we obtain the desired result.
\end{proof}

\begin{proof}[Proof of Lemma~\ref{lem:exp,smallt,upper,2}]
We follow the previous proof, but we substitute the general bound of~\eqref{eqn:SPtk} 
instead of Proposition~\ref{prop:SPtk}.  If $\hat{k} = t\lfloor k/t\rfloor$, this yields
\begin{align*}
\Exp(|\mathcal{S}_{n,t,k}|)
& \le \left(\frac{en}{k}\right)^k q^{\frac{\hat{k}(\hat{k}-t)}{2} + \hat{k}(k-\hat{k})} 
\exp(k \log k - k\log\log k - k +o(k)),
\end{align*}
and then (since $\log k \ge \log \log_b n$)
\begin{align*}
&\frac{2\log \Exp(|\mathcal{S}_{n,t,k}|)}{k} \\
& \le 2\log n -\left(k-t+\frac{(k-\hat{k})(t-(k-\hat{k}))}{k}\right)\log b-2\log \log \log_b n + o(1) \\
& \le 2\log n -(k-t)\log b-2\log \log \log_b n+1.
\end{align*}
Then substitution of the assumed lower bound on $k$ yields the result.
\end{proof}

By a similar argument, we see moreover that the following is true.

\begin{proposition}\label{prop:exp,mediumt,upper,2}
Suppose $0 < p < 1$ and $\eps>0$ are fixed.
Suppose $t=t(n)$ and $k = k(n)$ satisfy as $n \to \infty$ that $t \sim \tau \log_b n$ 
and $k \sim (\kappa+\eps) \log_b n$, where $\tau,\kappa>0$ satisfy $\tau \le 2$ and
$\iota(\tau,\kappa) =0$.
Then $\Exp(|\mathcal{S}_{n,t,k}|) \le \exp((1+o(1))\iota(\tau,\kappa+\eps)(\log n)^2/\log b)$.
\end{proposition}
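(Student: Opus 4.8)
<br>

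The plan is to mirror the proof of Lemma~\ref{lem:exp,smallt,upper,1}, using the bound $\mathcal{SP}_{t,k} \le \exp(k\log k - (k/t)\log k - k\log t + 3k)$ from Proposition~\ref{prop:SPtk}, but this time tracking the dominant term at the scale $(\log n)^2$ rather than the linear scale, and making essential use of the counting of non-edges arising from a \emph{balanced} block structure. First I would set $\hat k = t\lfloor k/t\rfloor$ and observe that, as before, any $t$-component $k$-set induces a set partition of $[k]$ into blocks of size at most $t$ with no edges between blocks, hence at least $\binom{\hat k/t}{2}t^2 + \hat k(k-\hat k)$ non-edges; since $k - \hat k < t = o(\log_b n)$ while $k \sim (\kappa+\eps)\log_b n$, the $\hat k(k-\hat k)$ correction and the difference between $\hat k$ and $k$ are both $o((\log n)^2/\log b)$ and may be absorbed into the error term. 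So up to lower-order terms the non-edge count is $\tfrac12 k^2(1 - t/k)$ — wait, more carefully: $\binom{\hat k/t}{2}t^2 \sim \tfrac12 (k^2 - tk) = \tfrac12 k(k-t)$.

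Next I would combine this with $\binom{n}{k}\le (en/k)^k$ and Proposition~\ref{prop:SPtk}, take logarithms, and divide by $k/t$; the terms $k\log(en/k)$, $\log k$ and $k\log(ek/t)$ contribute $k\log n + o(k\log n)$ after multiplying back through (here one uses $k\log k = o(k\log n)$ since $k = \Theta(\log n)$, and $k\log t = o(k \log n)$ since $t \le 2\log n$ wait $t \sim \tau\log_b n$ so $\log t \sim \log\log n = o(\log n)$), so that
\[
\log\Exp(|\mathcal{S}_{n,t,k}|) \le k\log n - \tfrac{t}{2}k(k-t)\log b\cdot\frac{1}{t}\cdot t + o((\log n)^2)
\]
— let me instead just say: $\log\Exp(|\mathcal{S}_{n,t,k}|) \le k\log n - \tfrac12 k(k-t)\log b + o((\log n)^2/\log b)$. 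Substituting $k \sim (\kappa+\eps)\log_b n$ and $t \sim \tau\log_b n$ gives the leading coefficient $\tfrac12\big((\kappa+\eps) - \tfrac12(\kappa+\eps)(\kappa+\eps-\tau)\big)$ times... I need this to equal $\iota(\tau,\kappa+\eps)$. Since $\tau \le 2$ implies $\lfloor \kappa/\tau\rfloor \ge 1$, and for $\eps$ small enough $\lfloor(\kappa+\eps)/\tau\rfloor = \lfloor\kappa/\tau\rfloor =: m$ (this fails exactly when $\tau \mid \kappa$, which needs a separate sub-case — see the obstacle below), the definition~\eqref{eqn:iota} gives $\iota(\tau,\kappa+\eps) = \tfrac12\big((r)(r-\tau) - (\kappa+\eps)(\kappa+\eps-\tau-2)\big)$ with $r = \kappa+\eps - \tau m$. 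The key algebraic point is that the set-partition bound already accounts for the ``balanced'' non-edge count $\binom{\hat k/t}{2}t^2$, so that the $(r)(r-\tau)$ term matches; I would verify this identity by a direct expansion.

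The main obstacle is handling the borderline cases in the floor function and confirming the matching of coefficients. Specifically, when $\tau\mid\kappa$ one has $\lfloor(\kappa+\eps)/\tau\rfloor$ jumping, so one must check that for the relevant sign of $\eps$ (here $\eps>0$, so $k$ is slightly \emph{above} the threshold) the floor is $\kappa/\tau$ and the formula still behaves as in Lemma~\ref{lem:careful}\eqref{prop:careful,minus}; the hypothesis ``$\eps$ small enough'' together with $\iota(\tau,\kappa)=0$ should make $\iota(\tau,\kappa+\eps)<0$, so the bound $\exp((1+o(1))\iota(\tau,\kappa+\eps)(\log n)^2/\log b)$ is genuinely decaying and the conclusion is meaningful. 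A secondary technical point is that $t > \log\log_b np$ is allowed here (unlike Lemma~\ref{lem:exp,smallt,upper,1}), so one must check $t\le\log k$ still holds for Proposition~\ref{prop:SPtk} to apply — indeed $t\sim\tau\log_b n$ and $k\sim(\kappa+\eps)\log_b n$ give $\log k \sim \log\log n$, which is \emph{not} $\ge t$; so in fact Proposition~\ref{prop:SPtk} does \emph{not} apply and one should instead use the general bound~\eqref{eqn:SPtk}, $\mathcal{SP}_{t,k}\le\exp(k\log k - k\log\log k - k + o(k))$, whose extra terms are again $o((\log n)^2/\log b)$ since $k=\Theta(\log n)$. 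With that substitution the computation goes through identically, the set-partition factor being negligible at the $(\log n)^2$ scale, so that the $\iota$ coefficient comes entirely from $\binom{n}{k}$ and the balanced non-edge count. I would close by noting this is the same reasoning as in Proposition~\ref{prop:exp,mediumt,upper,1}, with $\tau\le2$ replacing $\tau>2$ and the floor $\lfloor\kappa/\tau\rfloor = m \ge 1$ replacing $\lfloor\kappa/\tau\rfloor = 1$.
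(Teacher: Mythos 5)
Your overall architecture is the paper's: bound $\Exp(|\mathcal{S}_{n,t,k}|)$ by $\binom{n}{k}\,q^{\#\{\text{non-edges}\}}\,\mathcal{SP}_{t,k}$, and your closing observation that Proposition~\ref{prop:SPtk} is inapplicable here (since $t\sim\tau\log_b n\gg\log k$) so one must use the generic bound~\eqref{eqn:SPtk}, whose contribution is $o((\log n)^2)$, is exactly what the paper does (its sketch follows the proof of Lemma~\ref{lem:exp,smallt,upper,2}). The gap is in the middle step. You assert ``$k-\hat k< t=o(\log_b n)$'' and on that basis discard $\hat k(k-\hat k)$ and replace $\binom{\hat k/t}{2}t^2$ by $\tfrac12 k(k-t)$. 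But in this proposition $t\sim\tau\log_b n$ is of the \emph{same} order as $k$, so $r:=k-\hat k$ can be $\Theta(\log n)$ and the exact non-edge count is
$\binom{\hat k/t}{2}t^2+\hat k(k-\hat k)=\tfrac12\bigl(k(k-t)+r(t-r)\bigr)$,
which exceeds your surrogate $\tfrac12 k(k-t)$ by $\tfrac12 r(t-r)=\Theta((\log n)^2)$ in general. Using the smaller count still gives a valid upper bound, but a strictly weaker one: the exponent coefficient becomes $-\tfrac12(\kappa+\eps)(\kappa+\eps-\tau-2)$, which exceeds $\iota(\tau,\kappa+\eps)$ by $\tfrac12 r'(\tau-r')$ where $r'=(\kappa+\eps)-\tau\lfloor(\kappa+\eps)/\tau\rfloor$. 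For $\tau\le2$ one typically has $\kappa<\tau+2$, so for small $\eps$ your coefficient is even strictly \emph{positive}; the bound then fails to show $\Exp(|\mathcal{S}_{n,t,k}|)\to0$ and could not feed into the proof of Theorem~\ref{thm:chi,medium} (contrast Lemma~\ref{lem:careful}\ref{prop:careful,minus}, which gives $\iota(\tau,\kappa+\eps)<-\eps$).

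Relatedly, your hoped-for ``key algebraic point'' — that the set-partition bound already accounts for the balanced structure so that the $(r)(r-\tau)$ term matches — is where the direct expansion would break: the $(r)(r-\tau)$ piece of $\iota$ comes entirely from the non-edge count (the cross term $\hat k(k-\hat k)$ together with the difference between $\hat k(\hat k-t)/2$ and $k(k-t)/2$), i.e.\ precisely the terms you absorbed into the error, and not from $\mathcal{SP}_{t,k}$, which is genuinely negligible at the $(\log n)^2$ scale. The repair is simply to keep the exact count: dividing the log-expectation by $k/2$ gives the bracket $k-t+\frac{(k-\hat k)(t-(k-\hat k))}{k}$, and substituting $k\sim(\kappa+\eps)\log_b n$, $t\sim\tau\log_b n$, $\hat k\sim\tau\lfloor(\kappa+\eps)/\tau\rfloor\log_b n$ yields exactly $\iota(\tau,\kappa+\eps)(\log n)^2/\log b$ as in the paper. (Your worries about the floor when $\tau\mid\kappa$ are harmless: for $\eps$ small, $\lfloor(\kappa+\eps)/\tau\rfloor=\kappa/\tau$ and Lemma~\ref{lem:careful}\ref{prop:careful,minus} applies.)
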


\begin{proof}
Following the last proof, if $\hat{k} = t\lfloor k/t\rfloor$, then we obtain
\begin{align*}
\frac{2\log \Exp(|\mathcal{S}_{n,t,k}|)}{k}
& \le (2+o(1))\log n -\left(k-t+\frac{(k-\hat{k})(t-(k-\hat{k}))}{k}\right)\log b\\
& \sim -\left(\kappa+\eps-\tau-2 + \frac{\left(\kappa+\eps - \tau\left\lfloor \frac{\kappa+\eps}{\tau}\right\rfloor\right)\left(\tau-\kappa-\eps + \tau\left\lfloor \frac{\kappa+\eps}{\tau}\right\rfloor\right)}{\kappa+\eps}\right)\log n,
\end{align*}
whereupon we have derived
\begin{align*}
\log \Exp(|\mathcal{S}_{n,t,k}|)
& \le (1+o(1))\iota(\tau,\kappa+\eps)\frac{(\log n)^2}{\log b}.\qedhere
\end{align*}
\end{proof}

\subsection{Lower bounds on $\Exp(|\mathcal{S}_{n,t,k}|)$}

We now establish lower bounds for $\Exp(|\mathcal{S}_{n,t,k}|)$.  
First we remind the reader of the following.

\begin{proposition}[Erd\H{o}s and R\'enyi~\cite{ErRe59}]
\label{prop:conn}
For any $0 < p < 1$ and positive integer $t$ satisfying $tp \ge 2\log t$, there exists 
$\eta = \eta(t,p) > 2/3$ such that $\Pr(\G{t}{p}\text{ is connected}) \ge \eta$ for all 
$t$ sufficiently large.
\end{proposition}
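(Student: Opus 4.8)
The plan is to estimate the probability that $\G{t}{p}$ is disconnected by a union bound over the possible sizes of the smallest component, and to show that this probability tends to $0$ as $t \to \infty$ for each fixed $p$; once it is below $1/3$ for all large $t$, we may take $\eta$ to be, say, $2/3$ (or any value in $(2/3,1)$ below the infimum over large $t$, adjusting the finitely many small cases into the phrase ``for all $t$ sufficiently large''). First I would write
\[
\Pr(\G{t}{p}\text{ disconnected}) \le \sum_{j=1}^{\lfloor t/2\rfloor} \binom{t}{j} q^{j(t-j)},
\]
since if the graph is disconnected there is a nonempty vertex subset $S$ with $|S| = j \le t/2$ and no edges between $S$ and its complement, contributing a factor $q^{j(t-j)}$, and there are $\binom tj$ choices of $S$. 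Using $\binom tj \le t^j$ and $t - j \ge t/2$ for $j \le t/2$, each term is at most $(t q^{t/2})^j$. Since $0 < q < 1$ is fixed, $t q^{t/2} \to 0$, so for $t$ large enough this is at most $(1/2)^j$ for every $j$ (indeed the bound $tq^{t/2}\le 1/2$ holds for all large $t$), and hence the sum is at most $\sum_{j\ge 1} 2^{-j} = 1$; a slightly more careful accounting — isolating the $j=1$ term $t q^{t-1} \to 0$ and bounding the tail $j \ge 2$ by $\sum_{j\ge 2}(tq^{t/2})^j = O((tq^{t/2})^2) \to 0$ — shows the whole probability tends to $0$. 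Therefore $\Pr(\G tp\text{ connected}) \to 1$, so it exceeds $2/3$ for all $t$ sufficiently large, and we may set $\eta = \eta(t,p) = \Pr(\G tp\text{ connected}) > 2/3$.

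There is essentially no obstacle here: this is the classical first-moment (union bound) argument of Erd\H{o}s and R\'enyi, and the hypothesis $tp \ge 2\log t$ is far stronger than what the bare connectivity statement for fixed $p$ requires (it is stated in this form only because it is the regime the paper invokes, and because the same estimate, with $q^{t/2}$ replaced by an exponential in $-pt/2 \le -\log t$, still gives a vanishing bound). The only mild point of care is the quantifier order: $\eta$ is allowed to depend on $t$ and $p$, so it suffices to exhibit \emph{some} lower bound above $2/3$ valid for all large $t$, which the computation above supplies. If one instead wanted a single constant $\eta$ working uniformly, one would simply take the infimum of $\Pr(\G tp\text{ connected})$ over $t \ge t_0$ for a suitable threshold $t_0 = t_0(p)$, which is positive and at least $2/3$ by the same estimate. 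I would keep the write-up to the three lines of the displayed union bound plus one sentence on the limit, since anything more is routine.
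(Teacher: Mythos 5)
The paper gives no proof of this proposition at all --- it is quoted from Erd\H{o}s--R\'enyi --- and your union bound over the size $j$ of a vertex set sending no edges to its complement is indeed the classical route, so the approach itself is the right one. The genuine gap is the regime of $p$. Your main argument treats $p$ (hence $q$) as a fixed constant (``since $0<q<1$ is fixed, $tq^{t/2}\to 0$''), but the proposition is invoked in the paper with $p=p(n)$ allowed to tend to $0$ (see Lemma~\ref{lem:exp,larget,lower} and Lemma~\ref{lem:exp,smallt,lower}, whose hypotheses are only $0<p<1$ and $np\to\infty$); the hypothesis $tp\ge 2\log t$ is there precisely to cover that case, and the implicit ``for all $t$ sufficiently large'' must hold uniformly over all $p$ satisfying $tp\ge 2\log t$, not with a threshold $t_0(p)$ attached to one fixed $p$. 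Your parenthetical extension to this regime does not work as stated: from $tp\ge 2\log t$ one only gets $tq^{t/2}\le t e^{-pt/2}\le t\cdot t^{-1}=1$, so the quantity $(tq^{t/2})^j$ is merely bounded by $1$ per term --- it neither vanishes nor sums below $1/3$ --- and the inequality $tq^{t/2}\le 1/2$ is simply not implied by the hypothesis.

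The gap is repairable inside your framework, but it needs a finer split of the range of $j$ than the single bound $t-j\ge t/2$. For $1\le j\le t/4$ use $t-j\ge 3t/4$, so $\binom{t}{j}q^{j(t-j)}\le t^j e^{-3pjt/4}\le t^j\,t^{-3j/2}=t^{-j/2}$, and the sum of these terms is $O(t^{-1/2})$. For $t/4<j\le t/2$ use $\binom{t}{j}\le 2^t$ and $j(t-j)\ge t^2/8$, so each term is at most $2^t e^{-pt^2/8}\le \exp\left(t\log 2-\tfrac{t}{4}\log t\right)$, and there are at most $t$ such terms. Both contributions tend to $0$ uniformly over all $p$ with $tp\ge 2\log t$, so $\Pr(\G{t}{p}\text{ is disconnected})\to 0$ in the required uniform sense, and your concluding step (take $\eta(t,p)=\Pr(\G{t}{p}\text{ connected})>2/3$ for all large $t$) then goes through; that final quantifier discussion in your write-up is fine as is.
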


\begin{lemma}\label{lem:exp,larget,lower}
Suppose $p = p(n)$ satisfies $0 < p < 1$ and $n p \to \infty$ as $n \to \infty$.
Suppose $t=t(n)$ and $k = k(n)$ satisfy that $t,k \to \infty$ as $n \to \infty$.
Furthermore assume $t > k/2$ (so that $1 \le k/t < 2$),
\begin{align*}
k \le t + \frac{k}{t}\log_b \frac{np}{pt + \log np} -\frac{1}{t}\log_b\frac{4}{\eta}- \frac{1}{\log b},
\end{align*}
where $\eta=\eta(t,p)$ is as in Proposition~\ref{prop:conn}.
Then $\Exp(|\mathcal{S}_{n,t,k}|) \ge \exp(t)$ for $n$ large enough.
\end{lemma}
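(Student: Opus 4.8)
The plan is to mirror the structure of the upper bound in Lemma~\ref{lem:exp,larget,upper}, but from below. The key point is that a lower bound on $\Exp(|\mathcal{S}_{n,t,k}|)$ can be obtained by counting only a convenient \emph{subfamily} of $t$-component $k$-sets, namely those whose induced subgraph is connected (hence a single component of order $k$, which is $\le t$ since $t > k/2 \ge k$ fails — wait, we only have $t > k/2$, so $k$ can exceed $t$; instead count sets inducing exactly two components, one of size $t$ and one of size $k-t$, each connected, with no edges between). Concretely, I would choose a partition of a fixed $k$-subset of $[n]$ into blocks $B_1$ of size $t$ and $B_2$ of size $k-t$ (noting $0 \le k-t < t$ by the hypothesis $t > k/2$, together with $k \le t + (k/t)\log_b(\cdots)$ forcing $k < 2t$), and lower-bound the probability that the induced graph on this $k$-set has no $B_1$--$B_2$ edges and that each $B_i$ induces a connected graph.

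First I would write $\Exp(|\mathcal{S}_{n,t,k}|) \ge \binom{n}{k}\binom{k}{t} \, q^{t(k-t)} \, \Pr(\G{t}{p}\text{ connected}) \cdot \Pr(\G{k-t}{p}\text{ connected})$, using independence of the three events (non-edges between blocks, edges inside $B_1$, edges inside $B_2$ are on disjoint edge sets). The factor $\Pr(\G{t}{p}\text{ connected}) \ge \eta > 2/3$ comes from Proposition~\ref{prop:conn}, whose hypothesis $tp \ge 2\log t$ holds because $t = O(\log_b np)$ is not assumed here — rather $t \to \infty$ and $p$ is bounded below, so $tp = \Theta(t) \gg 2\log t$. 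For the smaller block, if $k - t \ge 1$ I would bound $\Pr(\G{k-t}{p}\text{ connected}) \ge p^{k-t-1} \ge q^{?}$... more cleanly, since $k-t < t$ and $p$ is a constant, $\Pr(\G{k-t}{p}\text{ connected}) \ge \min\{1, p^{k-t}\} \ge b^{-(k-t)}$ say, or just absorb it; the hypothesis already carries the correction term $-\frac1t\log_b\frac{4}{\eta}$ which is exactly designed to swallow the $\eta$ and the small-block connectivity factor (the ``$4$'' presumably being $2/\eta$ times a factor $2$ for the second block, or just a crude constant). Then I take logarithms, divide by $t$, use $\binom{n}{k} \ge (n/k)^k$ and $\binom{k}{t} \ge 1$ (or $\ge (k/(k-t))^{k-t}$ if needed), obtaining
\[
\frac{\log \Exp(|\mathcal{S}_{n,t,k}|)}{t} \ge \frac{k}{t}\log\frac{n}{k} - (k-t)\log b - \frac{1}{t}\log\frac{4}{\eta} + o(1),
\]
and then substitute the assumed \emph{upper} bound on $k$ to conclude this is $\ge 1$, i.e.\ $\Exp(|\mathcal{S}_{n,t,k}|) \ge \exp(t)$, for $n$ large.

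The main obstacle — really the only delicate point — is handling the connectivity probability of the \emph{small} block $B_2$ of size $k-t$, which may be a bounded constant or may grow slowly, and in either case $\Pr(\G{k-t}{p}\text{ connected})$ need not be bounded away from $0$ (e.g.\ if $k - t$ is a large constant it is a positive constant depending on $p$ and $k-t$; if $k-t \to \infty$ slowly then Proposition~\ref{prop:conn} applies again since $(k-t)p \gg 2\log(k-t)$). I would split into these cases: if $k = t$ there is no second block and the argument is cleaner; if $1 \le k-t = O(1)$ then $\Pr(\G{k-t}{p}\text{ connected}) \ge c(p) > 0$ is an absolute constant absorbed into the $-\frac1t\log_b\frac4\eta$ slack (since $\frac1t \to 0$); if $k - t \to \infty$ then apply Proposition~\ref{prop:conn} to get a second factor $> 2/3$, and $4/\eta$ in the hypothesis accommodates the product $\eta^{-1}$ of both block-connectivity lower bounds (each $> 2/3$, product $> 4/9$, reciprocal $< 9/4 < 4$... with a margin, giving where the constant $4$ comes from). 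Everything else is bookkeeping parallel to Lemma~\ref{lem:exp,larget,upper}; in particular the same manipulation showing $pk \ge pt + \log np$ is reused to check that $k-t = \Omega(1)$ and that the dominant term $\frac{k}{t}\log(n/k) - (k-t)\log b$ is correctly controlled by the hypothesis on $k$.
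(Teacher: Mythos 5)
Your overall skeleton is the paper's: lower-bound $\Exp(|\mathcal{S}_{n,t,k}|)$ by counting only $k$-sets split into a block of size $t$ and a block of size $k-t<t$ with no crossing edges, giving $\binom{n}{k}\binom{k}{t}q^{t(k-t)}$ times connectivity factors, then take logarithms and feed in the hypothesis on $k$. But there is a genuine gap at the one point you yourself flag as delicate. You additionally require the small block of size $k-t$ to be connected, and to control $\Pr(\G{k-t}{p}\text{ is connected})$ you assume that ``$p$ is bounded below'' / ``$p$ is a constant''. The lemma grants no such thing: it only assumes $0<p<1$ and $np\to\infty$, so $p=p(n)\to0$ is allowed. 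In that regime $k-t$ may tend to infinity (it can be of order $\log_b np$, which is enormous for small $p$) while $(k-t)p$ is far below $2\log(k-t)$, so Proposition~\ref{prop:conn} does not apply to the small block and its connectivity probability is exponentially small in $k-t$ (already the no-isolated-vertex probability behaves like $((k-t)p)^{k-t}$ when $(k-t)p\to0$). This loss cannot be absorbed by the only slack your hypothesis provides, namely the additive constant $\log_b(4/\eta)$, and the computation is tight enough that the conclusion $\Exp(|\mathcal{S}_{n,t,k}|)\ge e^{t}$ is then lost. The same unsupported ``$p$ bounded below'' also underlies your verification of $tp\ge 2\log t$ for the large block.

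The fix is exactly the option you talked yourself out of, and it is what the paper does: do not require the second block to be connected at all. Since $k-t<t$, an arbitrary graph on the remaining $k-t$ vertices automatically has all components of order at most $t$, and since $k<2t$ the unique component of order exactly $t$ identifies the chosen $t$-block, so no set is double-counted. This yields $\Exp(|\mathcal{S}_{n,t,k}|)\ge\binom{n}{k}\binom{k}{t}q^{t(k-t)}\eta$ with a single factor $\eta$; in particular your guess that the constant $4$ encodes a product of two block-connectivity probabilities is off --- in the paper it absorbs the single $\eta$ together with the slack from the estimate $pk<2(pt+\log np)$. Relatedly, your closing remark that ``the same manipulation showing $pk\ge pt+\log np$ is reused'' does not transfer: that inequality in Lemma~\ref{lem:exp,larget,upper} came from the assumed \emph{lower} bound on $k$, which is absent here, and for the present lemma you need the comparison in the opposite direction (an upper bound on $pk$ relative to $pt+\log np$), which is the final piece of bookkeeping the paper supplies and your sketch leaves unaddressed.
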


\begin{proof}
For this, we count $t$-component $k$-sets formed by the disjoint union of a 
connected $t$-set and a connected $(k-t)$-set.  Given a set of $k$ vertices, we construct 
such a set by taking an arbitrary vertex subset with $t$ vertices, forming an arbitrary connected 
graph on those $t$ vertices, and forming an arbitrary graph on the remaining $k-t$ vertices. 
The choices of graph formed on the two parts can be made independently.  We have not 
double-counted any graph by this construction.  It follows by Proposition~\ref{prop:conn} that
\begin{align*}
\Exp(|\mathcal{S}_{n,t,k}|)
& \ge \binom{n}{k} \binom{k}{t} q^{t(k-t)}\eta.
\end{align*}
Since $\binom{x}{y} \ge (x/y)^y$, it then follows that
\begin{align*}
\log \Exp(|\mathcal{S}_{n,t,k}|)
 & \ge k \log \frac{n}{k} + t \log \frac{k}{t} - t(k-t) \log b +\log\eta\\
 & \ge k\log np - k\log pk - t(k-t) \log b +\log\eta.
\end{align*}
The conditions on $k$ and $t$ imply both that $pk < 2(pt + \log np)$ and
\begin{align*}
t(k-t)\log b
& \le k\log np - k\log(pt + \log np) - \log\frac{4}{\eta} - t\\
& \le k\log np - k\log pk + 2\log 2 - \log\frac{4}{\eta} - t. 
\end{align*}
Therefore,
\begin{align*}
\frac{\log \Exp(|\mathcal{S}_{n,t,k}|)}t
& \ge \frac{\log\eta-2\log2}{t} + \frac{1}{t}\log\frac{4}{\eta} + 1
 = 1,
\end{align*}
as required.
\end{proof}

Moreover, a similar argument shows that the following holds.
Recall that in the case $\tau > 2$, corresponding to $\lfloor\kappa/\tau\rfloor=1$, we have $\iota(\tau,\kappa-\eps) = \kappa-\eps-\tau(\kappa-\eps-\tau)$ if $\eps$ is small enough.

\begin{proposition}\label{prop:exp,mediumt,lower,1}
Suppose $0 < p < 1$ is fixed and $\eps>0$ is a small enough constant.
Suppose $t=t(n)$ and $k = k(n)$ satisfy as $n \to \infty$ that $t \sim \tau \log_b n$ and $k \sim (\kappa-\eps) \log_b n$, where $\tau,\kappa>0$ satisfy $\tau > 2$ and
$\iota(\tau,\kappa) =0$.
Then $\Exp(|\mathcal{S}_{n,t,k}|) \ge \exp((1+o(1))\iota(\tau,\kappa-\eps)(\log n)^2/\log b)$.
\end{proposition}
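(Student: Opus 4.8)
The plan is to imitate the proof of Lemma~\ref{lem:exp,larget,lower} almost verbatim, but retaining the leading-order term rather than discarding it. Since $\tau>2$, for $\eps>0$ small enough we have $1<(\kappa-\eps)/\tau<2$, so that (for $n$ large) $1\le k/t<2$ and $\lfloor k/t\rfloor=1$; moreover $\kappa>\tau+1$ forces $k-t\sim(\kappa-\eps-\tau)\log_b n\to\infty$. Hence we may lower-bound $|\mathcal{S}_{n,t,k}|$ by counting only those $t$-component $k$-sets realised as a disjoint union of a connected $t$-set and a connected $(k-t)$-set with no edges between them. Exactly as in Lemma~\ref{lem:exp,larget,lower}, this construction double-counts nothing and yields
\[
\Exp(|\mathcal{S}_{n,t,k}|) \ \ge\ \binom{n}{k}\binom{k}{t}q^{t(k-t)}\,\eta_t\,\eta_{k-t},
\]
where $\eta_m=\Pr(\G{m}{p}\text{ is connected})$; for fixed $p$ and $m\to\infty$ one has $\eta_m\to 1$ (or one may simply invoke Proposition~\ref{prop:conn} to get $\eta_m>2/3$), so $\log(\eta_t\eta_{k-t})=O(1)$.

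Next I would take logarithms and apply $\binom{x}{y}\ge(x/y)^y$ to get
\[
\log\Exp(|\mathcal{S}_{n,t,k}|) \ \ge\ k\log\frac{n}{k} + t\log\frac{k}{t} - t(k-t)\log b + O(1).
\]
Substituting $t\sim\tau\log_b n$ and $k\sim(\kappa-\eps)\log_b n$: since $\log k$ and $\log t$ are $O(\log\log n)$ and $k/t\to(\kappa-\eps)/\tau$, the terms $k\log k$ and $t\log(k/t)$ are $o((\log n)^2)$, while $k\log n\sim(\kappa-\eps)(\log n)^2/\log b$ and $t(k-t)\log b\sim\tau(\kappa-\eps-\tau)(\log n)^2/\log b$. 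Hence
\[
\log\Exp(|\mathcal{S}_{n,t,k}|) \ \ge\ \bigl((\kappa-\eps)-\tau(\kappa-\eps-\tau)\bigr)\frac{(\log n)^2}{\log b} + o\!\left((\log n)^2\right).
\]
Since $\lfloor\kappa/\tau\rfloor=1$ when $\tau>2$, the remark preceding the proposition identifies $(\kappa-\eps)-\tau(\kappa-\eps-\tau)$ with $\iota(\tau,\kappa-\eps)$; and because $\tau\nmid\kappa$ for $\tau>2$ (one checks $\kappa=\tau+\tau/(\tau-1)$ is a multiple of $\tau$ only when $\tau=2$), Lemma~\ref{lem:careful}~\ref{prop:careful,notdivide} gives $\iota(\tau,\kappa-\eps)=\eps(\tau-1)>0$. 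Thus the error term is absorbed into a $(1+o(1))$ prefactor multiplying a positive constant, and we conclude $\Exp(|\mathcal{S}_{n,t,k}|)\ge\exp((1+o(1))\iota(\tau,\kappa-\eps)(\log n)^2/\log b)$, as claimed.

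This is essentially a bookkeeping exercise, so I do not anticipate a genuine obstacle. The only points requiring care are: (i) fixing $\eps$ small enough that $\lfloor(\kappa-\eps)/\tau\rfloor$ remains equal to $1$ and $k-t\to\infty$, which is precisely what makes the two-component construction legitimate and, crucially, makes its non-edge exponent $t(k-t)$ agree with the formula for $\iota$ in the $\lfloor\kappa/\tau\rfloor=1$ regime; (ii) confirming the connectivity probabilities cost nothing at first order, immediate since $t,k-t\to\infty$; and (iii) tracking the signs of the $o(\cdot)$ terms, since here they must be pushed into a $(1+o(1))$ factor in front of a \emph{positive} quantity (if $\iota(\tau,\kappa-\eps)$ were zero the estimate would be vacuous, which is exactly why the strict inequality in Lemma~\ref{lem:careful}~\ref{prop:careful,notdivide} is needed).
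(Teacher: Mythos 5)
Your proposal is correct and follows essentially the same route as the paper: the paper's proof sketch likewise repeats the two-part construction of Lemma~\ref{lem:exp,larget,lower} (connected $t$-set plus the remaining $k-t$ vertices, with Proposition~\ref{prop:conn} supplying the $\Theta(1)$ connectivity cost), keeps the leading terms $k\log n - t(k-t)\log b$, and identifies the limit with $\iota(\tau,\kappa-\eps)$ via $\lfloor\kappa/\tau\rfloor=1$ and Lemma~\ref{lem:careful}. Your only deviation --- insisting that the $(k-t)$-part also be connected, at an extra $O(1)$ cost --- is harmless and changes nothing at the stated order.
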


\begin{proof}
For $p$ fixed, the conditions of Proposition~\ref{prop:conn} are easily satisfied.
Moreover, based on $\tau > 2$ and $\iota(\tau,\kappa)=0$, we may assume $t > k/2$ for $n$ large enough.
Following the proof of Lemma~\ref{lem:exp,larget,lower}, we obtain
\begin{align*}
\log \Exp(|\mathcal{S}_{n,t,k}|)
& \ge k \log \frac{n}{k} + t \log \frac{k}{t} - t(k-t) \log b +\log\eta\\
& = k\log n - t(k-t) \log b + o((\log n)^2) \\
& \sim (\kappa-\eps-\tau(\kappa-\eps-\tau))\frac{(\log n)^2}{\log b}
 = \iota(\tau,\kappa-\eps)\frac{(\log n)^2}{\log b}. \qedhere
\end{align*}
\end{proof}

For the next lower bound, we need an expression for the number $\mathcal{EP}_{t,k}$ of set partitions of $[k]$ having the maximum number of parts of size exactly $t$.
For this, define $\hat{k} = t\lfloor k/t\rfloor$.
We can then write
\begin{align*}
\mathcal{EP}_{t,k} = \frac{k!}{(\hat{k}/t)!(t!)^{\hat{k}/t}(k-\hat{k})!}.
\end{align*}
By Stirling's approximation, we obtain that
\begin{align}
\mathcal{EP}_{t,k}
& \ge \frac{\sqrt{2\pi}k^{k+1/2}e^{-k}}{(e(\hat{k}/t)^{\hat{k}/t+1/2}e^{-\hat{k}/t})(et^{t+1/2}e^{-t})^{\hat{k}/t}(e(k-\hat{k})^{k-\hat{k}+1/2}e^{-(k-\hat{k})})}\nonumber\\
& = \Omega(1)\frac{k^{k+1/2}t^{\hat{k}/(2t)+1/2}}{\hat{k}^{\hat{k}/t+1/2}t^{\hat{k}}(k-\hat{k})^{k-\hat{k}+1/2}}
= \Omega(1)\frac{k^{k}t^{\hat{k}/(2t)}}{\hat{k}^{\hat{k}/t}t^{\hat{k}}(k-\hat{k})^{k-\hat{k}}}\frac{k^{1/2}t^{1/2}}{\hat{k}^{1/2}(k-\hat{k})^{1/2}}\nonumber\\
& \ge \Omega(1)\frac{k^{k}\sqrt{t}^{\hat{k}/t}}{k^{k/t}t^k}.
\label{eqn:EPtk}
\end{align}

\begin{lemma}\label{lem:exp,smallt,lower}
Suppose $p = p(n)$ satisfies $0 < p < 1$ and $n p \to \infty$ as $n \to \infty$.
Suppose $t=t(n)$ and $k = k(n)$ satisfy as $n \to \infty$ that $t\to\infty$, 
$t \le 2\log_b np$, $k \ge \log_b np$ and 
\begin{align*}
k \le 2\log_b n + t - \frac{t^2}{4\log_b np} - 2\log_b t - \frac{2\log_b \log_b np}{t} 
+\frac{2\log(\eta \sqrt{t}/3)}{3\log np}-\frac{1}{\log b},
\end{align*}
where $\eta = \eta(t,p)$ is as in Proposition~\ref{prop:conn}.
Then $\Exp(|\mathcal{S}_{n,t,k}|) \ge \exp(k)$ for $n$ large enough.
\end{lemma}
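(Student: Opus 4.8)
The plan is to mimic the lower-bound strategy of Lemma~\ref{lem:exp,larget,lower}, but instead of counting $t$-component $k$-sets built from just two connected pieces (one of size $t$, one of size $k-t$), I would count those built from $\lfloor k/t\rfloor$ connected pieces of size exactly $t$ together with one connected piece of size $k-\hat k$, where $\hat k = t\lfloor k/t\rfloor$ as in the display preceding the lemma. The number of ways to partition a fixed $k$-set into blocks of these sizes is exactly $\mathcal{EP}_{t,k}$, for which the lemma's preamble has already supplied the Stirling lower bound~\eqref{eqn:EPtk}. Each block must induce a connected subgraph (probability at least $\eta$ by Proposition~\ref{prop:conn}, which applies since $tp\ge 2\log t$ holds for fixed $p$ and $t\to\infty$, and the single small block trivially contributes a factor bounded below by a constant), and there must be no edges between distinct blocks, of which there are exactly $\binom{\hat k/t}{2}t^2 + \hat k(k-\hat k)$. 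Since the within-block and between-block events are independent across the relevant edge slots, this gives
\begin{align*}
\Exp(|\mathcal{S}_{n,t,k}|)
\ge \binom{n}{k}\,\mathcal{EP}_{t,k}\,\eta^{\hat k/t}\cdot\Omega(1)\cdot q^{\binom{\hat k/t}{2}t^2 + \hat k(k-\hat k)}.
\end{align*}

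Next I would take logarithms, apply $\binom{n}{k}\ge (n/k)^k$, substitute the bound~\eqref{eqn:EPtk} for $\mathcal{EP}_{t,k}$, and write the non-edge exponent as $\tfrac{\hat k(\hat k - t)}{2} + \hat k(k-\hat k) = \tfrac{\hat k}{2}(2k - \hat k - t)$. Collecting terms, dividing through by $k$ (or by $k/2$, whichever is cleaner), and using $0\le k-\hat k<t$ to absorb the error from replacing $\hat k$ by $k$ in the lower-order places, I expect to arrive at an estimate of the shape
\begin{align*}
\frac{2\log \Exp(|\mathcal{S}_{n,t,k}|)}{k}
\ge 2\log n - \left(k - t + \frac{t^2}{2k}\right)\log b\cdot(1+o(1)) - 2\log t - \frac{2\log\log_b np}{t} + \frac{2\log(\eta\sqrt t/3)}{k} + o(1),
\end{align*}
where the extra $\tfrac{t^2}{2k}\log b$ term (roughly $\tfrac{t^2}{4\log_b np}\log b$ once $k\sim 2\log_b n$, hence the $t^2/(4\log_b np)$ appearing in the hypothesis) comes from the ``missing'' non-edges within the last incomplete block relative to a perfect partition into $t$-blocks, and the $\eta\sqrt t/3$ term bundles together the connectivity factor $\eta^{\hat k/t}$ and the $\Omega(1)$ and $\sqrt t^{\hat k/t}$ contributions from~\eqref{eqn:EPtk}. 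Feeding in the assumed upper bound on $k$ then forces the right-hand side to be at least $2/k\cdot k = 2$, i.e.\ $\log\Exp(|\mathcal{S}_{n,t,k}|)\ge k$, for $n$ large.

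The main obstacle I anticipate is bookkeeping precision rather than any conceptual difficulty: one has to keep careful track of the $\hat k$-versus-$k$ discrepancies (there are several, in the Stirling factors, in the non-edge count, and in the connectivity exponent $\hat k/t$ versus $k/t$), and to verify that the hypotheses $t\le 2\log_b np$, $k\ge \log_b np$, and $t\to\infty$ are exactly what is needed to make each of these discrepancies either a lower-order correction or precisely one of the named terms ($t^2/(4\log_b np)$, $2\log_b t$, $2\log_b\log_b np / t$, $\tfrac{2\log(\eta\sqrt t/3)}{3\log np}$) in the stated bound on $k$. In particular the constant $\tfrac13$ and the $3\log np$ denominator suggest that $\eta>2/3$ is used via $\eta^{\hat k/t}\ge (1/3)^{k/t}$ after a crude bound, and that $k/t$ is being compared to $k/(3\log_b np)$; I would want to check that $t\le 2\log_b np$ gives $k/t\ge k/(2\log_b np)$ and that the slack is enough. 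Once the algebra is lined up, the conclusion is immediate from the chain of inequalities, exactly as in the proof of Lemma~\ref{lem:exp,larget,lower}.
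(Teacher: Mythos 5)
Your construction is the same as the paper's: partition the $k$-set into $\lfloor k/t\rfloor$ blocks of size $t$ plus a remainder block, count such partitions by $\mathcal{EP}_{t,k}$ via~\eqref{eqn:EPtk}, pay $\eta^{\hat k/t}$ for connectivity of the $t$-blocks through Proposition~\ref{prop:conn}, charge $q$ to the $\binom{\hat k/t}{2}t^2+\hat k(k-\hat k)$ cross-block non-edges, take logarithms, divide by $k/2$, and feed in the hypotheses. Two points in your write-up, however, do not survive the lemma's actual generality. First, you insist that the leftover block of size $k-\hat k<t$ also be connected and assert this "trivially contributes a factor bounded below by a constant". The lemma allows $p=p(n)\to0$ with $t$ as large as $2\log_b np$, and then a block of size up to $t-1$ can sit far below its connectivity threshold, so its connectivity probability is not bounded below by a constant (nor by anything like $e^{o(k)}$ via a crude spanning-structure bound); the requirement is also pointless, since any graph on fewer than $t$ vertices automatically has all components of order less than $t$. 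The paper simply places an arbitrary graph on the remainder block, and you should do the same. Second, your sketched correction for the incomplete block is off: the exact term after dividing by $k/2$ is $(k-\hat k)\bigl(t-(k-\hat k)\bigr)/k\le t^2/(4k)$, which is then bounded by $t^2/(4\log_b np)$ using the hypothesis $k\ge\log_b np$ --- not $t^2/(2k)$ with $k\sim2\log_b n$. Taken literally, your constant $t^2/(2k)$ combined with $k$ possibly as small as $\log_b np$ exceeds the $t^2/(4\log_b np)$ allowance in the stated upper bound on $k$, so the final substitution would not close; with the correct $t^2/(4k)$ it does. Your guess about the role of the factor $3$ is essentially right: the upper bound on $k$ forces $k\le 3\log_b np$, which is used to replace $\log k$ by $\log\log_b np+\log 3$, and the stray $\log 3$ terms are absorbed into the $\eta\sqrt t/3$ expression; no separate bound of the form $\eta^{\hat k/t}\ge(1/3)^{k/t}$ is needed.
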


\begin{proof}
For this lower bound, it suffices to count $t$-component $k$-sets formed based on the disjoint 
union of $\hat{k}/t$ connected $t$-sets.  We construct such sets by taking set partitions 
of $[k]$ of the form counted by $\mathcal{EP}_{t,k}$, and independently forming an arbitrary 
connected graph on each block of size $t$ (and an arbitrary graph on the remainder block, 
if necessary).  Note that the number of non-edges for such a $t$-component $k$-set is 
bounded below by $\binom{\hat{k}/t}{2} t^2 + \hat{k}(k-\hat{k})$ (where $\hat{k} = t\lfloor k/t\rfloor$).  Each set constructed 
in this way is a $t$-component $k$-set and no set is double-counted.  It follows from 
Proposition~\ref{prop:conn} and~\eqref{eqn:EPtk} that
\begin{align*}
&\Exp(|\mathcal{S}_{n,t,k}|)
\ge \binom{n}{k} q^{\binom{\hat{k}/t}{2} t^2 + \hat{k}(k-\hat{k})}\eta^{\hat{k}/t}\mathcal{EP}_{t,k}\\
& \ge \binom{n}{k} q^{\frac{\hat{k}(\hat{k}-t)}{2} 
+ \hat{k}(k-\hat{k})}\eta^{\hat{k}/t}\exp\left(k\log k-\frac{k}{t} \log k - k\log t 
+ \frac{\hat{k}}{t}\log \sqrt{t} + o(k) \right).
\end{align*}
The assumed upper bound on $k$ implies that $k\le 3\log_b np$.
Now, using $\binom{x}{y} \ge (x/y)^y$, taking the logarithm, dividing by $k/2$, substituting 
$\log k \le \log \log_b np + \log 3$, we obtain for $n$ large enough
\begin{align*}
&\frac{2\log\Exp(|\mathcal{S}_{n,t,k}|)}{k}\\
& \ge 2\log n-\left(k-t+\frac{(k-\hat{k})(t-(k-\hat{k}))}{k}\right)\log b-\frac{2\log k}{t}
-2\log t+\frac{2\log (\eta \sqrt{t})}{k}+o(1)\\
& \ge 2\log n-\left(k-t+\frac{t^2}{4\log_b np}\right)\log b-2\log t-\frac{2\log \log_b np}{t}
+\frac{2\log (\eta \sqrt{t}/3)}{k}-1.
\end{align*}
The result follows upon substitution of the assumed upper bound on $k$
(and $k\le3\log_b np$).
\end{proof}

By a similar argument, we see moreover that the following holds.

\begin{proposition}\label{prop:exp,mediumt,lower,2}
Suppose $0 < p < 1$ and $\eps>0$ are fixed.
Suppose $t=t(n)$ and $k = k(n)$ satisfy as $n \to \infty$ that $t \sim \tau \log_b n$ and 
$k \sim (\kappa-\eps) \log_b n$, where $\tau,\kappa>0$ satisfy $\tau \le 2$ and $\iota(\tau,\kappa) =0$.
Then $\Exp(|\mathcal{S}_{n,t,k}|) \ge \exp((1+o(1))\iota(\tau,\kappa-\eps)(\log n)^2/\log b)$.
\end{proposition}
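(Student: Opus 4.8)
The plan is to run the argument of Lemma~\ref{lem:exp,smallt,lower} essentially verbatim, exploiting that now, with $p$ a fixed constant and $t,k=\Theta(\log n)$, all of the lower-order terms that had to be tracked carefully there are absorbed into an $o((\log n)^2)$ error, so that only $k\log n$ and the guaranteed non-edge count survive. Concretely, set $\hat k=t\lfloor k/t\rfloor$ and bound $\Exp(|\mathcal{S}_{n,t,k}|)$ from below by counting the $t$-component $k$-sets built from a set partition of $[k]$ of the type enumerated by $\mathcal{EP}_{t,k}$ ---that is, with $\hat k/t$ blocks of size exactly $t$ and one remainder block of size $k-\hat k<t$--- by independently placing an arbitrary connected graph on each size-$t$ block and an arbitrary graph on the remainder block, with no edge between distinct blocks. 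Since $p$ is constant, $tp\ge 2\log t$ holds for $n$ large, so Proposition~\ref{prop:conn} applies to each size-$t$ block; combined with the non-edge count of at least $\binom{\hat k/t}{2}t^2+\hat k(k-\hat k)$ and the bound~\eqref{eqn:EPtk}, this gives
\[
\Exp(|\mathcal{S}_{n,t,k}|)\ \ge\ \binom nk\, q^{\binom{\hat k/t}{2}t^2+\hat k(k-\hat k)}\,\eta^{\hat k/t}\,\mathcal{EP}_{t,k}.
\]

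Next I would take logarithms and apply $\binom nk\ge(n/k)^k$ together with~\eqref{eqn:EPtk}. Since $k/t\to(\kappa-\eps)/\tau=\Theta(1)$ and $t,k=\Theta(\log n)$, a routine accounting (in which the $k\log k$ coming from $\binom nk$ cancels against the leading term of $\mathcal{EP}_{t,k}$, and residuals such as $k\log(k/t)$, $\frac kt\log k$ and $\frac{\hat k}{t}\log(\eta\sqrt t)$ are $O(\log n\log\log n)$) shows that every contribution besides $k\log n$ and $\bigl(\binom{\hat k/t}{2}t^2+\hat k(k-\hat k)\bigr)\log b$ is $o((\log n)^2)$; hence
\[
\log\Exp(|\mathcal{S}_{n,t,k}|)\ \ge\ k\log n-\Bigl(\binom{\hat k/t}{2}t^2+\hat k(k-\hat k)\Bigr)\log b-o\bigl((\log n)^2\bigr).
\]
(Unlike in Lemma~\ref{lem:exp,smallt,lower}, no correction of the form $-t^2/(4\log_b np)$ is needed: we retain the non-edge count verbatim and only seek first-order asymptotics.)

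Finally I would substitute $k\sim(\kappa-\eps)\log_b n$, $t\sim\tau\log_b n$, hence (writing $M=\lfloor(\kappa-\eps)/\tau\rfloor$ and $\rho=(\kappa-\eps)-\tau M\in[0,\tau)$) $\hat k\sim M\tau\log_b n$ and $k-\hat k\sim\rho\log_b n$, use $\log n=\log b\cdot\log_b n$, and match the resulting quadratic in $\log_b n$ against the definition~\eqref{eqn:iota}: a short calculation gives $(\kappa-\eps)-\tfrac12 M(M-1)\tau^2-M\tau\rho=\iota(\tau,\kappa-\eps)$, whence
\[
\log\Exp(|\mathcal{S}_{n,t,k}|)\ \ge\ (1+o(1))\,\iota(\tau,\kappa-\eps)\,\frac{(\log n)^2}{\log b},
\]
which is a genuinely growing lower bound since $\iota(\tau,\kappa-\eps)>0$ for $\eps$ small by Lemma~\ref{lem:careful}. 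I do not anticipate a real obstacle: a lower bound needs only one good family of $t$-component $k$-sets, and the near-maximal-part partitions supply it. The one thing to watch is keeping the floor $M=\lfloor(\kappa-\eps)/\tau\rfloor$ straight through the substitution (choosing $\eps$ so that $(\kappa-\eps)/\tau\notin\mathbb{Z}$, which is harmless) and checking that the remainder block of size $<t$ affects only the $o((\log n)^2)$ error.
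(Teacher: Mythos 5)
Your proposal is correct and follows essentially the same route as the paper: both lower-bound $\Exp(|\mathcal{S}_{n,t,k}|)$ via the construction of Lemma~\ref{lem:exp,smallt,lower} (partitions counted by $\mathcal{EP}_{t,k}$ with connected $t$-blocks, Proposition~\ref{prop:conn}, and the non-edge count $\binom{\hat k/t}{2}t^2+\hat k(k-\hat k)$), absorb all lower-order terms into $o((\log n)^2)$, and match the surviving quadratic against $\iota(\tau,\kappa-\eps)$, and your algebraic identity $(\kappa-\eps)-\tfrac12 M(M-1)\tau^2-M\tau\rho=\iota(\tau,\kappa-\eps)$ checks out. The only cosmetic remark is that $\eps$ is given, not chosen, so you cannot arrange $(\kappa-\eps)/\tau\notin\mathbb{Z}$; but this is harmless since $\iota(\tau,\cdot)$ is continuous across multiples of $\tau$, so the asymptotics are unaffected whichever side $k/t$ approaches from.
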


\begin{proof}
For $p$ fixed, the conditions of Proposition~\ref{prop:conn} are satisfied.
Following the proof of Lemma~\ref{lem:exp,smallt,lower},
\begin{align*}
&\frac{2\log \Exp(|\mathcal{S}_{n,t,k}|)}{k}\\
& \ge 2\log n-\left(k-t+\frac{(k-\hat{k})(t-(k-\hat{k}))}{k}\right)\log b-\frac{2\log k}{t}-2\log t+\frac{2\log (\eta \sqrt{t})}{k}+o(1)\\
& = (2+o(1))\log n -\left(k-t+\frac{(k-\hat{k})(t-(k-\hat{k}))}{k}\right)\log b\\
& \sim -\left(\kappa-\eps-\tau-2 + \frac{\left(\kappa-\eps - \tau\left\lfloor \frac{\kappa-\eps}{\tau}\right\rfloor\right)\left(\tau-\kappa+\eps + \tau\left\lfloor \frac{\kappa-\eps}{\tau}\right\rfloor\right)}{\kappa-\eps}\right)\log n.
\end{align*}
Then, as desired, we have derived
\begin{align*}
\log \Exp(|\mathcal{S}_{n,t,k}|)
& \ge (1+o(1))\iota(\tau,\kappa-\eps)\frac{(\log n)^2}{\log b}.\qedhere
\end{align*}
\end{proof}


\section{The threshold: $t=\Theta(\log n)$}\label{sec:chi}

This section is devoted to carrying out a second moment estimate to prove the following lemma.

\begin{lemma}\label{lem:chi,medium}
Suppose $0 < p < 1$ is fixed and $\eps>0$ is a small enough constant.
Suppose $t=t(n) \sim \tau \log_b n$ as $n \to \infty$ for some $\tau > 0$, and let $\kappa=\kappa(\tau)$ be the unique positive real guaranteed by Lemma~\ref{lem:iota}.
If $k = k(n) \sim (\kappa-\eps) \log_b n$ as $n\to\infty$,
then
$\Pr(\alphac{t}(\G{n}{p}) < k) \le \exp(-n^2/(\log n)^5)$.
\end{lemma}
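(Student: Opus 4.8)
The plan is to apply the second moment method to the random variable $X = |\mathcal{S}_{n,t,k}|$, the number of $t$-component $k$-sets in $\G{n}{p}$, and then use the fact that $\Pr(\alphac{t}(\G{n}{p}) < k) = \Pr(X = 0) \le \mathrm{Var}(X)/(\Exp X)^2$ by Chebyshev's inequality. Since $k \sim (\kappa-\eps)\log_b n$, Proposition~\ref{prop:exp,mediumt}\eqref{prop:exp,mediumt,lower,part} gives $\Exp X \ge \exp((1+o(1))\iota(\tau,\kappa-\eps)(\log n)^2/\log b)$, and since $\iota(\tau,\kappa-\eps) > 0$ (by Lemma~\ref{lem:careful}\eqref{prop:careful,divide} or~\eqref{prop:careful,notdivide}, depending on whether $\tau \mid \kappa$), this is $\exp(\Omega((\log n)^2))$. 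To get the very strong bound $\exp(-n^2/(\log n)^5)$ on $\Pr(X=0)$, it will not suffice to bound $\mathrm{Var}(X)/(\Exp X)^2$ crudely; I expect one needs to show that $\Exp(X^2) \le (\Exp X)^2 (1 + \exp(-n^2/(\log n)^5 \cdot (1+o(1))))$ or, more realistically, to combine Chebyshev with the fact that $(\Exp X)^2$ is superexponentially large compared to any polynomial correction. Actually the cleanest route is to show $\mathrm{Var}(X) \le (\Exp X)^2 \cdot \exp(-n^2/(\log n)^5 \cdot c)$ for some $c>0$; this forces us to examine the correlation structure carefully rather than settle for $\mathrm{Var}(X) = o((\Exp X)^2)$.

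The heart of the argument is estimating $\Exp(X^2) = \sum_{S, S'} \Pr(S, S' \in \mathcal{S}_{n,t,k})$, summed over ordered pairs of $k$-subsets. Group the pairs by $j = |S \cap S'|$, $0 \le j \le k$. For a fixed pair with intersection size $j$, I would bound $\Pr(S, S' \in \mathcal{S}_{n,t,k})$ by summing over the possible $t$-component partition structures on $S$ and on $S'$ and counting guaranteed non-edges, being careful that non-edges lying inside $S \cap S'$ are counted only once. The standard approach: write $\Exp(X^2)/(\Exp X)^2 = \sum_{j=0}^k \left(\text{contribution from intersection } j\right)$, and show the $j=0$ term is close to $1$ while all other terms sum to something tiny. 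For small $j$ (say $j$ bounded, or $j = o(k)$) the pairs behave nearly independently and the ratio is $1 + o(1)$ times a negligible correction; the potentially dangerous range is $j$ close to $k$, where $S$ and $S'$ share almost all their vertices and hence almost all their forced non-edges, so the "discount" from the second set is small. Here one uses that the number of pairs with $|S\cap S'| = j$ is $\binom{n}{k}\binom{k}{j}\binom{n-k}{k-j}$, which for $j$ near $k$ is roughly $\binom{n}{k}$ (no second large binomial factor), and this must be beaten by the $q$-power savings and the combinatorial factor $\Exp X$ in the denominator. This is precisely the kind of three-part split the introduction advertises ("a three-part second moment argument"): small $j$, intermediate $j$, and $j$ near $k$.

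The main obstacle, I expect, is the intermediate and large-$j$ regime, specifically controlling $\sum_j \binom{k}{j}\binom{n-k}{k-j} / \binom{n}{k} \cdot (\text{non-edge deficit factor}) \cdot (\text{partition-count factor})$ and showing it is bounded by $\exp(-n^2/(\log n)^5)$ rather than merely $o(1)$. The non-edge deficit for intersection $j$ is the number of non-edge pairs forced in $S'$ but not already forced (or available "for free") in $S$ — roughly on the order of $j(k-t)$ down to, at worst, a quantity that still grows; one has to verify that even in the worst partition structures the deficit is large enough. Crucially, since $\Exp X = \exp(\Theta((\log n)^2))$ and the target $\exp(-n^2/(\log n)^5)$ is much smaller (more negative) than $-\Exp X$ would suggest naively, I believe the right strategy is: bound $\Pr(X = 0)$ by $\Pr(X = 0)$ differently, or note that Chebyshev gives $\Pr(X=0) \le \mathrm{Var}(X)/(\Exp X)^2$ and then the content is that $\mathrm{Var}(X)/(\Exp X)^2 \le \exp(-n^2/(\log n)^5)$. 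To achieve such a bound one likely needs the dominant non-trivial term in $\Exp(X^2)$ to carry a factor like $q^{\Omega(k^2)} = \exp(-\Omega((\log n)^2))$ relative to $(\Exp X)^2$ — but $\exp(-\Omega((\log n)^2))$ is far larger than $\exp(-n^2/(\log n)^5)$, so in fact the required estimate must come from a cruder, global bound: namely $\Pr(\alphac{t} < k) = \Pr(X=0)$ and one uses a Janson-type or direct union-complement bound where the exponent $n^2/(\log n)^5$ arises as (number of vertex pairs) $\times$ (something), i.e. $\binom{n}{2} \asymp n^2/2$ times a $1/(\log n)^5$-type factor. I would therefore revisit whether the intended proof is really plain Chebyshev or a sharper concentration inequality (Janson's inequality, or a martingale/Azuma bound on $\alphac{t}$) — my best guess is that the factor $n^2/(\log n)^5$ signals an Azuma-type exposure martingale on edge-indicators after first establishing, via the second moment method, that $\Exp(\alphac{t}) \ge k$ or that $\Pr(\alphac{t} \ge k) \ge$ some not-too-small quantity, then boosting to the stated superpolynomial probability. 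The technical crux in either route is the large-$j$ correlation estimate described above.
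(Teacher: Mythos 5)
There is a genuine gap: you never settle on a mechanism that can actually produce a bound as small as $\exp(-n^2/(\log n)^5)$, and the route you lead with cannot. Plain Chebyshev is hopeless here, as you half-suspect: the diagonal terms alone give $\mathrm{Var}(X)\ge\sum_A \Pr(A\in\mathcal{S}_{n,t,k})(1-\Pr(A\in\mathcal{S}_{n,t,k}))=\Omega(\Exp X)$, so $\mathrm{Var}(X)/(\Exp X)^2$ can never be smaller than roughly $1/\Exp X=\exp(-O((\log n)^2))$, which is astronomically weaker than the target (and too weak for the application, where the lemma is combined with a union bound over $2^n$ vertex subsets). Your fallback guess, an Azuma-type exposure martingale after a weak second-moment step, also does not reach the stated bound: a vertex- or edge-exposure martingale applied to $\alphac{t}$ (or to a maximal count of disjoint $t$-component $k$-sets, which is at most $n/k$) yields deviations bounds of the form $\exp(-O(n/\mathrm{polylog}\,n))$ at best, which again fails to beat $2^n$. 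The paper instead applies Janson's inequality, $\Pr(X=0)\le\exp\bigl(-(\Exp X)^2/(\Exp X+\Delta)\bigr)$ with $\Delta=\sum_{1<|A\cap B|<k}\Pr(A,B\in\mathcal{S}_{n,t,k})$, so the entire content becomes showing $\Delta=O((\log n)^5/n^2)\,(\Exp X)^2$; the factor $n^2/(\log n)^5$ then falls out because the dominant contribution to $\Delta$ comes from overlaps of size $\ell=2$, where the ratio term is $\Theta(k^4/n^2)=\Theta((\log n)^4/n^2)$, summed over $O(\log n)$ values of $\ell$.

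Beyond the choice of inequality, the correlation estimates themselves are only described qualitatively in your proposal, whereas they are the bulk of the work. The paper splits $\Delta$ into three ranges $\ell<\ell_1$, $\ell_1\le\ell<\ell_2$, $\ell_2\le\ell<k$ with $\ell_i=\lambda_i\log_b n$; the small-overlap range uses monotonicity plus convexity of $s_\ell=2(k^2/n)^\ell b^{\binom{\ell}{2}}$ (requiring $\lambda_1<2$); the large-overlap range uses a degree/non-edge count giving $q^{\frac12(k-\ell)(\ell+k-2t)}$ and a choice of $\lambda_2$ constrained via Lemma~\ref{lem:careful} and $\iota(\tau,\kappa-\eps)>\eps$; and the intermediate range needs the structural bipartition argument (splitting off the largest components) together with case analysis on the sign of $k-\ell-2i$ and the explicit relations $\kappa=\tau+\tau/(\tau-1)$, $\iota(\tau,\kappa)=0$. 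None of these quantitative steps, nor the verification that the "worst" overlap regime still loses a factor $\exp(-\Omega((\log n)^2))$ against $\Exp X$, appears in your sketch, so the proposal as written does not constitute a proof.
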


Let us first see how this lemma implies our main theorem.
This same approach was core to determining the asymptotic behaviour of $\chi(\G{n}{p})$ in~\cite{Bol88}.

\begin{proof}[Proof of Theorem~\ref{thm:chi,medium}]
Let $\eps>0$ be some arbitrary small constant.
It follows from Propositions~\ref{prop:exp,mediumt}\ref{prop:exp,mediumt,upper,part} and 
Lemma~\ref{lem:careful}\ref{prop:careful,minus} that
\begin{align*}
\Pr\left(\chic{t}(\G{n}{p}) \le \frac{n}{(\kappa+\eps) \log_b n} \right)
& \le \Pr\left(\alphac{t}(\G{n}{p}) \ge (\kappa+\eps) \log_b n\right) \\
& \le \Exp(|\mathcal{S}_{n,t,k}|)
=  \exp(-\Omega((\log n)^2))
\end{align*}
(where $\mathcal{S}_{n,t,k}$ is the collection of $t$-component $k$-sets in $\G{n}{p}$); 
thus $\chic{t}(\G{n}{p}) \ge n/((\kappa+\eps) \log_b n)$ a.a.s.
The remainder of the proof is devoted to obtaining a closely matching upper bound.

For this, set $k = (\kappa-\eps/2)\log_b n$.
Let $\mathcal{A}_n$ denote the set of graphs $G$ on $[n]$ such that $\alphac{t}(G[S]) \ge k$ 
for all $S \subseteq [n]$ with $|S| \ge n/(\log n)^2$.  Then, by Lemma~\ref{lem:chi,medium}, 
assuming $\eps$ is small enough,
\begin{align*}
\Pr\left(\G{n}{p} \notin \mathcal{A}_n\right)
& \le 2^n \Pr\left(\alphac{t}\left(\G{\lceil n/(\log n)^2 \rceil}{p}\right) < k\right) 
\le \exp\left(O(n)-\Omega\left(n^2/(\log n)^9\right)\right) \to 0
\end{align*}
as $n \to \infty$.  Therefore, $\G{n}{p} \in \mathcal{A}_n$ a.a.s.

But for a graph $G$ in $\mathcal{A}_n$ the following procedure yields a colouring as desired. 
Let $S' = [n]$.  While $|S'| \ge n/(\log n)^2$, form a colour class from an arbitrary $t$-component 
$k$-subset $T$ of $S'$ and let $S' = S'\setminus T$.  At the end of these iterations, 
$|S'| < n/(\log n)^2$ and we may just assign each vertex of $S'$ to its own colour class. 
The resulting partition is a $t$-component colouring of $\G{n}{p}$ and
the total number of colours used is less than $n/((\kappa-\eps/2) \log_b n) 
+ n/(\log n)^2 \le n/((\kappa - \eps)\log_b n)$ for large enough $n$.
As $\eps >0$ was chosen arbitrarily small, this completes the proof.
\end{proof}

\begin{proof}[Proof of Lemma~\ref{lem:chi,medium}]
Throughout the proof, we always assume a choice of $\eps>0$ that is small enough for our 
purposes --- for the application of Lemma~\ref{lem:careful} we certainly need at least that
$\eps < \min\left\{\tau,\kappa-\tau\left\lfloor \frac{\kappa}{\tau}\right\rfloor\right\}$.
Then from Proposition~\ref{prop:exp,mediumt}\ref{prop:exp,mediumt,lower,part} we have as $n\to\infty$ that
\begin{align}
\Exp(|\mathcal{S}_{n,t,k}|)
& \ge \exp\left((1+o(1))\iota(\tau,\kappa-\eps)\frac{(\log n)^2}{\log b}\right)
\ge \exp\left((1+o(1))\eps\frac{(\log n)^2}{\log b}\right).
\label{eqn:exp}
\end{align}
We use Janson's Inequality (Theorem~2.18(ii) in~\cite{JLR00}):
\begin{align}
\Pr(\alphac{t}(\G{n}{p}) < k) 
= \Pr(|\mathcal{S}_{n,t,k}| = 0) 
\le \exp \left( - \frac{ (\Exp(|\mathcal{S}_{n,t,k}|))^2}{\Exp(|\mathcal{S}_{n,t,k}|) + \Delta }\right), 
\label{eqn:Janson,chi}
\end{align}
where
\[
\Delta = \sum_{A, B \subseteq [n], 1 < |A \cap B| < k} \Pr(A, B \in \mathcal{S}_{n,t,k}).
\]
We split $\Delta$ into separate sums according to the size $\ell$ of $A \cap B$. 
In particular, let $p(k,\ell)$ be the probability that two $k$-subsets of $[n]$ that
overlap on exactly $\ell$ vertices are both in $\mathcal{S}_{n,t,k}$. 
Thus 
\[ 
\Delta = \sum_{2\le \ell < k} f(\ell), \qquad \text{ where }\qquad 
f(\ell) = \binom{n}{k} \binom{k}{\ell} \binom{n - k}{k-\ell} p(k,\ell). 
\]
Set $\ell_1 = \lambda_1\log_b n$ and $\ell_2 = \lambda_2\log_b n$, for some 
$0\le\lambda_1\le \lambda_2 \le \kappa$ which are chosen to satisfy the inequalities~\eqref{eqn:lambda1},~\eqref{eqn:lambda2,1} and~\eqref{eqn:lambda2,2} below.
Now we write $\Delta = \Delta_1 + \Delta_2 + \Delta_3$ where $\ell_1,\ell_2$ determine the 
ranges of the sums into which we decompose $\Delta$:
\begin{align*}
\Delta_1 &=
 \sum_{2\le \ell < \ell_1} f(\ell ), \qquad
 \Delta_2 = \sum_{\ell_1 \le \ell < \ell_2} f(\ell), \qquad
 \Delta_3 = \sum_{\ell_2 \le \ell < k} f(\ell).
\end{align*}
It suffices to show that $\Delta_i = O((\log n)^5/n^2) (\Exp(|\mathcal{S}_{n,t,k}|))^2$ for each $i\in\{1,2,3\}$ for the result to follow from~\eqref{eqn:Janson,chi}.
To bound each $\Delta_i$ we consider two arbitrary $k$-subsets $A$ and $B$ of $[n]$ that overlap on exactly $\ell$ vertices,
i.e.~$|A\cap B| = \ell$. 
Moreover, we write
\begin{align*}
p(k,\ell) = \Pr(A,B \in \mathcal{S}_{n,t,k}) = \Pr( A \in \mathcal{S}_{n,t,k} \given B \in \mathcal{S}_{n,t,k}) \Pr(B \in \mathcal{S}_{n,t,k})
\end{align*}
and focus on bounding the conditional factor.
We remark here that, although rounding is indeed quite important to the form of this result, we shall several times in optimisation procedures below take the liberty of discarding floor and ceiling symbols, wherever this causes no confusion.

\paragraph{Bounding $\Delta_1$.}\label{p:delta1_begin}

The property of having component order at most $t$ is monotone decreasing, so 
the conditional probability that $A \in \mathcal{S}_{n,t,k}$ is maximised when $E[A\cap B] = \emptyset$. 
Thus
\begin{align*}
\Pr( A \in \mathcal{S}_{n,t,k} \given B \in \mathcal{S}_{n,t,k})
& \le \Pr( A \in \mathcal{S}_{n,t,k} \given E[A\cap B] = \emptyset) \\
& \le \frac{\Pr(A \in \mathcal{S}_{n,t,k})}{\Pr(E[A\cap B] = \emptyset)} = 
b^{\binom{\ell}{2}} \Pr(A \in \mathcal{S}_{n,t,k}),
\end{align*}
implying that $p(k,\ell) \le b^{\binom{\ell}{2}}(\Pr(A \in \mathcal{S}_{n,t,k}))^2$.  

We have though that for $n$ large enough
\begin{align*}
\frac{\binom{k}{\ell} \binom{n-k}{k-\ell}}{\binom{n}{k}}
\le 2\frac{\binom{k}{\ell} n^{k-\ell} / (k-\ell )!}{{n^k/k!}} 
\le 2\left(\frac{k^2}{n}\right)^\ell.
\end{align*}
Thus
\begin{align*}
\Delta_1
& \le \left(\binom{n}{k}\Pr(A \in \mathcal{S}_{n,t,k})\right)^2 \sum_{2 \le \ell < \ell_1} 2\left(\frac{k^2}{n}\right)^\ell b^{\binom{\ell}{2}}
= (\Exp(|\mathcal{S}_{n,t,k}|))^2 \sum_{2 \le \ell < \ell_1} s_\ell,
\end{align*}
where
\begin{align}\label{eq:def_sell}
s_{\ell}:= 2\left(\frac{k^2}{n}\right)^\ell b^{\binom{\ell}{2}}.
\end{align}
We now show that the summation $\sum s_\ell$ is $o(1)$.
To this end, note that $s_{\ell+1}/s_\ell = k^2 b^\ell/n$ and so the sequence $\{s_\ell\}$ 
is convex in $\ell$.
So $s_\ell$ is maximised over $\ell\in\{2,\dots,\ell_1\}$ at either $\ell = 2$ or $\ell = \ell_1$.
We have that $s_2 = 2bk^4/n^2 = \Theta((\log n)^4/n^2)$, but
\begin{align*}
s_{\ell_1}
& \le 2\left(\frac{k^2}{n}n^{\lambda_1/2}\right)^{\lambda_1\log_b n} = \exp(-\Omega((\log n)^2)),
\end{align*}
provided that $\lambda_1$ is chosen so that
\begin{align}
0 < \lambda_1 < 2.
\label{eqn:lambda1}
\end{align}
Therefore, with this choice,
\begin{align*}
\Delta_1
 \le \ell_1 s_2 (\Exp(|\mathcal{S}_{n,t,k}|))^2 = O\left(\frac{(\log n)^5}{n^2}\right) (\Exp(|\mathcal{S}_{n,t,k}|))^2.
\end{align*}

\paragraph{Bounding $\Delta_3$.}

In this case, we implicitly use the assumption that $\lambda_2 > \tau$, but as we shall see this is automatic from the requirement~\eqref{eqn:lambda2,1} below.
Given that $B \in \mathcal{S}_{n,t,k}$, let us lower bound the number of non-edges accounted for by $A\setminus B$ with the event $A\in \mathcal{S}_{n,t,k}$. (So we count those non-edges induced by $A\setminus B$ plus those induced between $A\setminus B$ and $A\cap B$.)
In this event, we know that each vertex of $A\setminus B$ has maximum degree less than $t$ in $A$. The overall contribution of such vertices to the number of non-edges will be smallest if each neighbourhood is strictly contained in $A\cap B$. We conclude that the number of non-edges accounted for is at least $(k-\ell)(\ell-t+1)+\binom{k-\ell}{2} \ge (k-\ell)(\ell-t)+(k-\ell)^2/2$.
From this, and also using a crude bound for the number of set partitions of $A$, we get
\begin{align*}
\Pr( A \in \mathcal{S}_{n,t,k} \given B \in \mathcal{S}_{n,t,k})
& \le k^kq^{\frac12(k-\ell)(\ell+k-2t)}.
\end{align*}
Thus, since $\binom{k}{\ell} \binom{n-k}{k-\ell}
\le (kn)^{k-\ell}$, we have
\begin{align*}
\Delta_3
 \le \Exp(|\mathcal{S}_{n,t,k}|) \sum_{\ell_2 \le \ell < k} s_{\ell},
 \text{ where }s_{\ell}:= k^k\left(kn\cdot b^{-\frac12(\ell+k)+t}\right)^{k-\ell}.
\end{align*}
We now show that the summation $\sum s_\ell$ is $o(\Exp(|\mathcal{S}_{n,t,k}|))$. 
To this end, note that
\begin{align*}
\frac{s_{\ell+1}}{s_\ell}
 = \frac{b^{\ell-t+1/2}}{kn}
\end{align*}
and so the sequence $\{s_\ell\}$ is convex in $\ell$. So $s_\ell$ is maximised over $\ell\in\{\ell_2,\dots,k-1\}$ at either $\ell = \ell_2$ or $\ell = k-1$.
We have that $ks_{k-1} = k^{k+2}nb^{-k+t+1/2} = k^{O(k)}$. On the other end,
\begin{align*}
s_{\ell_2}
& \le k^k\left(kn\cdot b^{-\frac12(\ell_2+k)+t}\right)^{k-\ell_2}
= \exp\left(\left(1-\frac12(\lambda_2+\kappa-\eps)+\tau+o(1)\right)(\kappa-\lambda_2)\frac{(\log n)^2}{\log b}\right).
\end{align*}
Therefore, comparing with~\eqref{eqn:exp}, we may conclude that $\sum s_\ell \le ks_{\ell_2} = o(\Exp(|\mathcal{S}_{n,t,k}|))$ provided we choose
\begin{align}
\lambda_2
 \ge 2+2\tau-\kappa+\eps-\hat{\eps}
\label{eqn:lambda2,1}
\end{align}
for any $0 < \hat{\eps} < \eps$ satisfying
$\hat{\eps}(\kappa-\tau-1-(\eps-\hat{\eps})/2) < \iota(\tau,\kappa-\eps)$.
Since $\kappa \le \tau+2$, this automatically implies $\lambda_2>\kappa$.
Moreover, with any choice satisfying~\eqref{eqn:lambda2,1},
we may conclude that $\Delta_3 = O((\log n)^5/n^2) (\Exp(|\mathcal{S}_{n,t,k}|))^2$.
Note that $\iota(\tau,\kappa-\eps) \ge \eps$ by Lemma~\ref{lem:careful}, guaranteeing a choice for $\hat{\eps}$.
The reason for the restriction $\hat{\eps} < \eps$ is that, if we are in the case of Lemma~\ref{lem:careful}\ref{prop:careful,divide} and choose both $\hat{\eps} = \eps$ and $\lambda_2 = 2+2\tau-\kappa$, then $\hat{\eps}(\kappa-\tau-1-(\eps-\hat{\eps})/2) = \eps = \iota(\tau,\kappa-\eps)$ so that $s_{\ell_2}$ cannot be guaranteed to be smaller than the expression in~\eqref{eqn:exp}.
Since $\kappa > \tau+1$, we can also guarantee that the choice of $\lambda_2$ satisfies
\begin{align}
\lambda_2<\kappa,
\label{eqn:lambda2,2}
\end{align}
provided $\eps$ is small enough.

\paragraph{Bounding $\Delta_2$.}

In first bounding $\Delta_1$ and $\Delta_3$, we have derived appropriate conditions on the choice of $\lambda_1$ and $\lambda_2$, in inequalities~\eqref{eqn:lambda1},~\eqref{eqn:lambda2,1} and~\eqref{eqn:lambda2,2}.
Before beginning our analysis of $\Delta_2$, we note that $\kappa > 2\tau$ for all $0 < \tau < 2$; otherwise, $\tau \lfloor \kappa /\tau \rfloor < 2$ and it follows from $\iota(\tau,\kappa)=0$ that $\kappa > 2(\tau+4)/3$ which is greater than $\tau+2$ for $0 < \tau < 2$, a contradiction to Lemma~\ref{lem:iota}.
We may therefore assume that $\tau \ge 2$, or else the summation $\Delta_2$ can be made empty with a small enough choice of $\eps$ and a choice of $\lambda_1$ close enough to $2$.

Note that every $t$-component $k$-set induces a bipartition so that one part has at least $k-t$ vertices, the other has at least $t/2$ vertices, and there are no edges between the two parts. (To build such a partition, we form one of the parts by including just the largest component, unless it has at most $t/2$ vertices, in which case we add just the second largest component to the part, unless the resulting set has at most $t/2$ vertices, and so on.) For each such bipartition corresponding to $A$ being a $t$-component set, there is a corresponding bipartition of $A\setminus B$ (one part possibly being empty). We can thus estimate $\Pr( A \in \mathcal{S}_{n,t,k} \given B \in \mathcal{S}_{n,t,k})$ by conditioning on the bipartition of $A\setminus B$, and consider its extensions to bipartitions of $A$. Taking into account the non-edges between the parts, and by deeming the part of at least $k-t$ vertices to be composed of $i$ vertices from $A\setminus B$ and $j$ vertices from $A\cap B$, we obtain
\begin{align*}
\Pr( A \in \mathcal{S}_{n,t,k} \given B \in \mathcal{S}_{n,t,k}) 
& \le \max_{0\le i\le k-\ell} \binom{k-\ell}{i} \sum_{j=k-t-i}^{\min\{\ell,k-t/2-i\}} \binom{\ell}{j} q^{i(k-i-j)+(k-\ell-i)(i+j)-i(k-\ell-i)}  \\
& \le k^{O(k)} \max_{0\le i\le k-\ell} \sum_{j=k-t-i}^{\min\{\ell,k-t/2-i\}} q^{i(k-i)+j(k-\ell-2i)}.
\end{align*}
We break this maximisation in half with cases $i\le (k-\ell)/2$ and $i\ge (k-\ell)/2$, corresponding to different signs for $k-\ell-2i$.

In the lower half, the sum is maximised by minimising $j$, so
\begin{align*}
\sum_{j=k-t-i}^{\min\{\ell,k-t/2-i\}} q^{i(k-i)+j(k-\ell-2i)} 
& \le kq^{i(k-i)+(k-t-i)(k-\ell-2i)} 
 = kq^{i^2-(2(k-t)-\ell)i+(k-t)(k-\ell)}.
\end{align*}
Note that the convex quadratic in the exponent of this last expression is minimised at $i = k-t-\ell/2$. It can be checked that this value of $i$ is no larger than $(k-\ell)/2$, since $\tau > 1$; however, if $\ell > 2(k-t)$, then this value of $i$ is smaller than $0$, in which case the minimum of the quadratic is at $i=0$. We conclude that
\begin{align}
\max_{0\le i\le (k-\ell)/2} \sum_{j=k-t-i}^{\min\{\ell,k-t/2-i\}} q^{i(k-i)+j(k-\ell-2i)} 
& \le \left\{ \begin{array}{ll}
kq^{t(k-t)-\ell^2/4} & \text{if $\ell \le 2(k-t)$}\\
kq^{(k-t)(k-\ell)} & \mbox{otherwise}
\end{array} \right..
\label{eqn:lowersum}
\end{align}

In the upper half, the sum is maximised by maximising $j$. 
First consider when $k-t/2-i$ is the minimum in the upper delimiter for $j$, and so
\begin{align*}
\sum_{j=k-t-i}^{\min\{\ell,k-t/2-i\}} q^{i(k-i)+j(k-\ell-2i)}
 & \le kq^{i(k-i)+(k-t/2-i)(k-\ell-2i)}
 = kq^{i^2-(2(k-t/2)-\ell)i+(k-t/2)(k-\ell)}.
\end{align*}
Note the convex quadratic in the exponent of this last expression is minimised at $i = k-t/2-\ell/2$. It can be checked that this value of $i$ is no smaller than $(k-\ell)/2$, since $k \ge t$; however, if $\ell > t$, then this value of $i$ is larger than $k-\ell$, in which case the minimum of the quadratic is at $i=k-\ell$.
We conclude that
\begin{align}
\max_{(k-\ell)/2 \le i \le k-\ell} \sum_{j=k-t-i}^{\min\{\ell,k-t/2-i\}} q^{i(k-i)+j(k-\ell-2i)}
& \le \left\{ \begin{array}{ll}
kq^{\frac12t(k-t/2)-\ell^2/4} & \text{if $\ell \le t$}\\
kq^{\frac12t(k-\ell)} & \mbox{otherwise}
\end{array} \right..
\label{eqn:uppersum,first}
\end{align}
Otherwise $\ell\le k-t/2-i$ and so
in this case one concludes from a comparison of the extreme values of $i$, namely $i = (k-\ell)/2$ and $i = k-\ell-t/2$, that $\ell \le k-t$. This scenario is ruled out by a choice of $\lambda_1 > 2-\eps/2 > \kappa-\eps-\tau$ (using that $\tau\ge2$).

For the final stage of our estimate of $\Delta_2$, it will suffice to assume that $\ell \sim \lambda\log_b n$ for some $\lambda_1\le\lambda\le\lambda_2$. Since $\binom{k}{\ell} \binom{n-k}{k-\ell} \le k^{O(k)}n^{k-\ell}$, we can write
\begin{align}
\log \frac{f(\ell)}{\Exp(|\mathcal{S}_{n,t,k}|)} \le (1+o(1)) (\kappa-\eps-\lambda)\frac{(\log n)^2}{\log b}
+\log\Pr( A \in \mathcal{S}_{n,t,k} \given B \in \mathcal{S}_{n,t,k}),
\label{eqn:fell}
\end{align}
and shall show the expression is at most any fixed positive fraction of $(\log n)^2$ (and indeed could be negative) using~\eqref{eqn:lowersum} and~\eqref{eqn:uppersum,first}.

If we are in the first subcase of~\eqref{eqn:lowersum}, then $\lambda\le2(\kappa-\eps-\tau)$, and so we can conclude that
\begin{align*}
\log \frac{f(\ell)}{\Exp(|\mathcal{S}_{n,t,k}|)}
& \le (1+o(1)) \left(\kappa-\eps-\lambda-\tau(\kappa-\eps-\tau)+\frac14\lambda^2\right)\frac{(\log n)^2}{\log b}\\
&\sim \left(\lambda^2-4\lambda+4\eps(\tau-1)\right)\frac{(\log n)^2}{4\log b},
\end{align*}
where we used $\iota(\tau,\kappa)=0$ and $\kappa=\tau+\tau/(\tau-1)$. Consider the polynomial in $\lambda$ in brackets in the above expression. It has roots $2\pm2\sqrt{1-\eps(\tau-1)}$. So, since $\lambda_1$ is arbitrarily close to $2$ independently of $\eps$, the entire expression above is bounded above by any fixed fraction of $(\log n)^2$ provided
\begin{align*}
2+2\sqrt{1-\eps(\tau-1)} \ge 2(\kappa-\eps-\tau) = \frac{2\tau}{\tau-1}-2\eps.
\end{align*}
Since $\tau\ge2$, this inequality is guaranteed by a small enough choice of $\eps$.

If we are in the second subcase of~\eqref{eqn:lowersum}, then by~\eqref{eqn:fell}
\begin{align*}
\log \frac{f(\ell)}{\Exp(|\mathcal{S}_{n,t,k}|)}
& \le (1+o(1)) \left(\kappa-\eps-\lambda-(\kappa-\eps-\tau)(\kappa-\eps-\lambda)\right)\frac{(\log n)^2}{\log b}\\
& \sim \left((1-\kappa+\eps+\tau)(\kappa-\eps-\lambda)\right)\frac{(\log n)^2}{\log b}.
\end{align*}
which is at most any fixed fraction of $(\log n)^2$ with a small enough choice of $\eps$, since $\kappa>\tau+1$ and $\lambda_2 < \kappa$ (by~\eqref{eqn:lambda2,2}).

If we are in the first subcase of~\eqref{eqn:uppersum,first}, then $\lambda\le\tau$, and we deduce using~\eqref{eqn:fell} that
\begin{align*}
\log \frac{f(\ell)}{\Exp(|\mathcal{S}_{n,t,k}|)}
& \le (1+o(1)) \left(\kappa-\eps-\lambda-\frac12\tau\left(\kappa-\eps-\frac12\tau\right)+\frac14\lambda^2\right)\frac{(\log n)^2}{\log b} \\
& \sim \left(\left(1-\frac12\tau\right)\kappa-\eps-\lambda+\frac12\eps\tau+\frac14\tau^2+\frac14\lambda^2\right)\frac{(\log n)^2}{\log b}\\
& = \left(\lambda^2-4\lambda-\tau^2\left(1-\frac2{\tau-1}\right)+2\eps(\tau-2)\right)\frac{(\log n)^2}{4\log b}
\end{align*}
where in the last two lines we used $\iota(\tau,\kappa)=0$ and $\kappa=\tau^2/(\tau-1)$.
Consider the polynomial in $\lambda$ in brackets in the last line. It has roots
\begin{align*}
2\pm\sqrt{4+\tau^2\left(1-\frac2{\tau-1}\right)-2\eps(\tau-2)},
\end{align*}
and so the expression in the last line above is at most any fixed fraction of $(\log n)^2$ provided
\begin{align*}
2+\sqrt{4+\tau^2\left(1-\frac2{\tau-1}\right)} \ge \tau,
\end{align*}
since $\eps$ can be made arbitrarily small. This inequality holds by the fact that $\tau\ge2$.

If we are in the second subcase of~\eqref{eqn:uppersum,first}, then by~\eqref{eqn:fell}
\begin{align*}
\log \frac{f(\ell)}{\Exp(|\mathcal{S}_{n,t,k}|)}
& \le (1+o(1)) \left(\kappa-\eps-\lambda-\frac12\tau(\kappa-\eps-\lambda)\right)\frac{(\log n)^2}{\log b}\\
& \sim \left(1-\frac12\tau\right)(\kappa-\eps-\lambda)\frac{(\log n)^2}{\log b},
\end{align*}
which is at most any fixed fraction of $(\log n)^2$
with a small enough choice of $\eps$, since $\tau\ge2$ and $\lambda_2 < \kappa$.

We have succeeded in proving that $f(\ell) \le \Exp(|\mathcal{S}_{n,t,k}|)\cdot \exp(o((\log n)^2))$ if $\ell\sim\lambda\log_b n$ and $\lambda_1\le\lambda\le\lambda_2$. Since $\Exp(|\mathcal{S}_{n,t,k}|) \ge \exp(\Omega((\log n)^2))$ by~\eqref{eqn:exp}, this implies that $\Delta_2 = \sum_{\ell_1 \le \ell < \ell_2} f(\ell) \le O((\log n)^5/n^2) (\Exp(|\mathcal{S}_{n,t,k}|))^2$, as desired.

\medskip

Having obtained the desired estimates of $\Delta_1$, $\Delta_2$ and $\Delta_3$, we have completed the proof.
\end{proof}

\section{Constant-width concentration: $t\le \log\log_b np$}\label{sec:alpha}

In this section, we prove Theorem~\ref{thm:alpha}. We require a specialised Chernoff-type bound.
We define
\[
\Lambda^*(x) = \left\{ \begin{array}{ll}
\displaystyle x \log \frac{x}{p} + (1 - x) \log \frac{1 - x}{q} & \mbox{for $x\in [0, 1]$}\\
\infty & \mbox{otherwise}
\end{array} \right.,
\]
where $\Lambda^*(0) = \log b$ and $\Lambda^*(1) = \log (1/p)$.
This is the Fenchel--Legendre transform of the logarithmic moment generating function for 
the Bernoulli distribution with probability $p$.

\begin{lemma}[Lemma~3.3 of~\cite{KaMc10}]\label{lem:mixedbin}
Let $n_1$ and $n_2$ be positive integers, let $0<p<1$, and let $X$ and $Y$ be independent 
random variables with $X \sim \Bin(n_1,p)$ and $Y/2 \sim \Bin(n_2,p)$.
Note that $\Exp(X+Y)= (n_1+2 n_2)p$. Then for $0\leq x \leq p$
\[
\Pr(X+Y \leq (n_1+2 n_2)x) \leq \exp \left( - \frac12 (n_1+2 n_2) \Lambda^*(x)\right).
\]
\end{lemma}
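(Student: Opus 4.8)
The plan is to reduce the mixed sum $X+Y$ to a sum of independent centered contributions and apply the standard Cram\'er--Chernoff large deviations bound, being careful that the variable $Y$ contributes with a factor $2$ but runs over only $n_2$ underlying Bernoulli trials. First I would recall the exponential Markov inequality: for any $s\le 0$ (we use a negative parameter since we are bounding a lower tail),
\[
\Pr(X+Y\le (n_1+2n_2)x)\le e^{-s(n_1+2n_2)x}\,\Exp\!\left(e^{s(X+Y)}\right)
= e^{-s(n_1+2n_2)x}\,\Exp(e^{sX})\,\Exp(e^{sY}),
\]
using independence of $X$ and $Y$. Since $X\sim\Bin(n_1,p)$ we have $\Exp(e^{sX})=(q+pe^{s})^{n_1}$, and since $Y/2\sim\Bin(n_2,p)$ we have $\Exp(e^{sY})=\Exp(e^{2s\cdot(Y/2)})=(q+pe^{2s})^{n_2}$. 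Writing $M=n_1+2n_2$, the bound becomes
\[
\Pr(X+Y\le Mx)\le \exp\!\left(-sMx + n_1\log(q+pe^{s}) + n_2\log(q+pe^{2s})\right).
\]

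The next step is the choice of $s$. The clean way is to substitute $u=e^{s}$ (so $u\le 1$) and note that $n_1\log(q+pu)+n_2\log(q+pu^2)$ is, by concavity of $\log$ or by a direct AM--GM / convexity comparison, bounded above by $\tfrac{M}{2}\log(q+pu^2)$ when $u\le 1$ --- intuitively, replacing the ``slope-1'' trials by ``slope-2'' trials (each counted with half weight) only increases the moment generating function in the relevant range. Concretely, $\log(q+pu)\le \tfrac12\log(q+pu^2)+\tfrac12\log 1$ would be false in general, so instead I would simply optimise directly: the exponent $g(s)=-sMx+n_1\log(q+pe^{s})+n_2\log(q+pe^{2s})$ is convex in $s$, and setting $g'(s)=0$ gives
\[
Mx = n_1\frac{pe^{s}}{q+pe^{s}} + 2n_2\frac{pe^{2s}}{q+pe^{2s}}.
\]
For $0\le x\le p$ one checks there is a (unique) solution $s^*\le 0$, and one verifies --- this is the one genuine computation --- that at $s=s^*$ the value $g(s^*)$ is at most $-\tfrac{M}{2}\Lambda^*(x)$. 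The cleanest route to the latter is to take $s^*$ defined by $\tfrac{pe^{2s^*}}{q+pe^{2s^*}}=x$ (equivalently $e^{2s^*}=\tfrac{qx}{p(1-x)}$), plug into the two logarithmic terms using $q+pe^{2s^*}=\tfrac{q}{1-x}$ and $q+pe^{s^*}\le \sqrt{q+pe^{2s^*}}=\sqrt{q/(1-x)}$ (valid since $e^{s^*}\le\sqrt{e^{2s^*}}$ when $s^*\le 0$, wait --- $e^{s^*}\ge e^{2s^*}$ when $s^*\le 0$, so instead bound $q+pe^{s^*}\le (q+pe^{2s^*})^{1/2}(q+p)^{1/2}$ by Cauchy--Schwarz/log-convexity in the exponent), and collect terms to recognise $\tfrac{M}{2}\bigl(x\log\tfrac{x}{p}+(1-x)\log\tfrac{1-x}{q}\bigr)=\tfrac{M}{2}\Lambda^*(x)$.

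\textbf{Main obstacle.} The only delicate point is handling the two different ``temperatures'' $e^{s}$ and $e^{2s}$ in the combined moment generating function: a naive choice of $s$ tuned to one of the two binomials does not immediately give the symmetric-looking bound $\tfrac12(n_1+2n_2)\Lambda^*(x)$. The resolution --- and the step I expect to require the most care --- is the interpolation inequality that lets us replace $n_1\log(q+pe^{s})$ by $\tfrac{n_1}{2}\log(q+pe^{2s})$ for $s\le 0$ (equivalently, the convexity comparison $\log(q+pu)\le\tfrac12\log(q+pu^2)+\tfrac12\log(q+p)$ with $q+p=1$, which is just log-concavity of $u\mapsto q+pu$ is false in that direction, so one uses instead that $v\mapsto\log(q+pe^{v})$ is convex, hence $\log(q+pe^{s})=\log(q+pe^{\frac12\cdot 0+\frac12\cdot 2s})\le\tfrac12\log(q+p)+\tfrac12\log(q+pe^{2s})=\tfrac12\log(q+pe^{2s})$). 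With that inequality in hand the exponent is bounded by $-sMx+\tfrac{M}{2}\log(q+pe^{2s})$, and optimising the single remaining parameter over $s\le 0$ reduces exactly to the classical Chernoff bound for $\Bin(M/2,p)$ evaluated at level $x\le p$, namely $\exp(-\tfrac{M}{2}\Lambda^*(x))$, completing the proof. Finally I would note the boundary cases $x=0$ (giving $\exp(-\tfrac{M}{2}\log b)$, consistent with $\Lambda^*(0)=\log b$) as a sanity check.
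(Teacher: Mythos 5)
Your final argument is correct: the exponential Markov bound with $s\le 0$, the convexity of the Bernoulli cumulant generating function $v\mapsto\log(q+pe^{v})$ giving $\log(q+pe^{s})\le\tfrac12\log(q+pe^{2s})$ (equivalently, the Cauchy--Schwarz comparison $(q+pe^{s})^{2}\le q+pe^{2s}$), and then the classical lower-tail Chernoff optimisation for $\Bin\bigl((n_1+2n_2)/2,p\bigr)$ at level $x\le p$, which yields exactly $\exp\bigl(-\tfrac12(n_1+2n_2)\Lambda^*(x)\bigr)$ (with $x=0$ handled as the limit $s\to-\infty$). Note that this paper does not prove the lemma at all --- it is imported verbatim as Lemma~3.3 of~\cite{KaMc10} --- and your argument is essentially the standard proof given there; the only cleanup needed is to delete the abandoned false starts in your middle paragraph (the square-root and Cauchy--Schwarz detours you yourself retract) and keep only the convexity interpolation that closes the proof.
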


\begin{proof}[Proof of Theorem~\ref{thm:alpha}]
Due to Proposition~\ref{prop:exp,smallt}\ref{prop:exp,smallt,upper,part}, 
this proof reduces to proving a lower bound on $\alphac{t}(\G{n}{p})$.  
Let us note that, with the choice
\begin{align*}
k \le 2\log_b n + t - 2\log_b t - \frac{2\log_b \log_b np}{t} -\frac{2}{\log b},
\end{align*}
Proposition~\ref{prop:exp,smallt}\ref{prop:exp,smallt,lower,part} implies
$\Exp(|\mathcal{S}_{n,t,k}|) \ge \exp(k)$
for $n$ large enough.

As in the course of the proof of Theorem~\ref{thm:chi,medium} (p.\ \pageref{eqn:Janson,chi})
we use Janson's Inequality. The setting here is similar and the proof naturally 
follows similar lines. We have
\begin{align}
\Pr(\alphac{t}(\G{n}{p}) < k) = \Pr(|\mathcal{S}_{n,t,k}| = 0) \le \exp \left( - \frac{ (\Exp(|\mathcal{S}_{n,t,k}|))^2}{\Exp(|\mathcal{S}_{n,t,k}|) + \Delta }\right), \label{eqn:Janson}
\end{align}
where
\[
\Delta = \sum_{A, B \subseteq [n], 1 < |A \cap B| < k} \Pr(A, B \in \mathcal{S}_{n,t,k})
\]
(and $\mathcal{S}_{n,t,k}$ is the collection of $t$-component $k$-sets in $\G{n}{p}$).
Recall that $p(k,\ell)$ denotes the probability that two $k$-subsets of $[n]$ that
overlap on exactly $\ell$ vertices are both in $\mathcal{S}_{n,t,k}$. Thus
\[ 
\Delta = \sum_{2\le \ell < k} f(\ell), 
\qquad\text{ where }\qquad 
f(\ell) = \binom{n}{k} \binom{k}{\ell} \binom{n - k}{k-\ell} p(k,\ell). 
\]
One difference from the proof of Theorem~\ref{thm:chi,medium} is that here we split $\Delta$ 
into only two sums: 
we set $\ell_1 = 2\log_b n - 6\log_b k$ and write $\Delta = \Delta_1 + \Delta_2$ where 
$\ell_1$ determines the split of the sum:
\begin{align*}
\Delta_1 &=
 \sum_{2\le \ell \le \ell_1} f(\ell ),\qquad \text{and} \qquad
 \Delta_2 = \sum_{\ell_1 < \ell < k} f(\ell).
\end{align*}
It suffices to show that $\Delta_i = o((\Exp(|\mathcal{S}_{n,t,k}|))^2)$ for each $i\in\{1,2\}$ 
for the result to follow from~\eqref{eqn:Janson}.
To bound each $\Delta_i$ we consider two arbitrary $k$-subsets $A$ and $B$ of $[n]$ that 
overlap on exactly $\ell$ vertices, i.e.~$|A\cap B| = \ell$, and estimate $p(k,\ell)$ by 
conditioning on the set $E[A\cap B]$ of edges induced by $A\cap B$. 
In order to bound $p(k,\ell)$, we focus on the conditional probability 
$\Pr( A \in \mathcal{S}_{n,t,k} \given B \in \mathcal{S}_{n,t,k})$.

It is worth noting the basic estimates, $k \le (2+o(1))\log_b n$ and $k-\ell_1 = O(\log\log n)$. Furthermore, we may safely assume that $k$ is chosen so that $k \ge \log_b n$. We also ignore some rounding below, where it is unimportant.

\paragraph{Bounding $\Delta_1$.}

Our bound on $\Delta_1$ follows the same argument as for $\Delta_1$ in the proof of Theorem~\ref{thm:chi,medium}, 
and only differs at the very end when replacing $\ell_1$ by its value. We refer the reader 
to the arguments on page~\pageref{p:delta1_begin} for more details. The convex sequence $(s_\ell)$ 
defined there in~\eqref{eq:def_sell} is such that $s_2 = 2bk^4/n^2$, and 
\begin{align*}
s_{\ell_1}
& \le 2\left(\frac{k^2}{n}b^{\ell_1/2}\right)^{\ell_1} = \frac{1}{k^{\ell_1}} = o(s_2).
\end{align*}
Therefore, by convexity (proved on page~\pageref{p:delta1_begin}),
\begin{align*}
\sum_{2\le \ell \le \ell_1} s_{\ell} \le \ell_1 s_2 = O\left(\frac{k^5}{n^2}\right) = o(1).
\end{align*}

\label{p:delta1_end}
\paragraph{Bounding $\Delta_2$.} 
Note that
\begin{align*}
\Pr(A \in \mathcal{S}_{n,t,k} \given B \in \mathcal{S}_{n,t,k})
 \le \Pr( \forall v\in A\setminus B, \deg_{A}(v) \le t ),
\end{align*}
where $\deg_S(v)$ denotes the number of neighbours of $v$ in $S$.  
It therefore follows that
\begin{align*}
\Pr(A \in \mathcal{S}_{n,t,k} \given B \in \mathcal{S}_{n,t,k})
& \le \Pr\left(\sum_{v \in A \setminus B} \deg_A(v) \le t(k-\ell)\right)\\
& = \Pr\left(\Bin(\ell(k-\ell),p)+ 2\Bin\left(\binom{k-\ell}{2},p\right) \le t(k-\ell) \right).
\end{align*}
We shall employ Lemma~\ref{lem:mixedbin} with $n_1=\ell(k-\ell)$, $n_2=\binom{k-\ell}{2}$, and $x=t/(k-1)$. Note that $n_1+2 n_2= (k-1)(k-\ell)$ and so $(n_1+2 n_2)x=t(k-\ell)$.
Since $x=o(p)$, it follows from Taylor expansion calculations found in the first paragraph of the appendix of~\cite{FKM14} that
\begin{align*}
\Lambda^*\left(\frac{t}{k-1} \right)
= \log b - (1+o(1))\frac{t}{k}\log \frac{pk}{t} = \log b - (1+o(1))\frac{t}{k}\log \log n.
\end{align*}
Hence we conclude by Lemma~\ref{lem:mixedbin} that
\begin{align*}
\Pr(A \in \mathcal{S}_{n,t,k} \given B \in \mathcal{S}_{n,t,k})
& \le \exp\left(-\frac12 (k-1)(k-\ell) \Lambda^*\left(\frac{t}{k-1} \right)\right)\\
& = \exp\left(-(1+o(1))\left(1-\frac{t}{k}\log_b \log n\right)(k-\ell) \log n\right)\\
& = \left(\frac{(\log n)^{t/2}}{n}\right)^{(1+o(1))(k-\ell)}.
\end{align*}
Since $\binom{k}{\ell}\binom{n-k}{k-\ell} \le (k n)^{k-\ell}$, $k \le (2+o(1))\log_b n$ and $k-\ell_1 = O(\log\log n)$, we obtain that
\begin{align*}
\Delta_2
& \le \binom{n}{k} \Pr(B \in \mathcal{S}_{n,t,k})\sum_{\ell_1<\ell<k}
 \left((\log n)^{1+t/2}\right)^{(1+o(1))(k-\ell)} \\
& = \Exp(|\mathcal{S}_{n,t,k}|)\cdot \exp(O(t(\log\log n)^2)).
\end{align*}
That this last expression is $o((\Exp(|\mathcal{S}_{n,t,k}|))^2)$ follows by noting that $\Exp(|\mathcal{S}_{n,t,k}|) = \exp(\Omega(\log n))$ and $t = O(\log\log n)$.

\medskip

We have appropriately bounded $\Delta_1$ and $\Delta_2$, concluding the proof.
\end{proof}


\section{Sparse random graphs}\label{sec:sparse}

We do not have a complete understanding of $\chic{t}(\G{n}{p})$ and $\alphac{t}(\G{n}{p})$ for $p\to0$ as $n\to \infty$.
Nonetheless, we can observe the phenomenon described at the beginning of the paper: in any partition of the vertices of $\G{n}{p}$ into asymptotically fewer than $\chi(\G{n}{p})$ parts, one of the parts must induce a subgraph having a large component, about as large as the average part size.
This follows directly from the next result.

\begin{theorem}\label{thm:chi}
Suppose $p = p(n)$ satisfies $0 < p < 1$ and $n p \to \infty$ as $n \to \infty$.  Then the following hold.
\begin{enumerate}
\itemsep0pt
\item\label{thm:chi,smallestt} If $t(n) = o(\log np)$, then $\chic{t}(\G{n}{p}) \sim n/(2 \log_b np)$ a.a.s.
\item\label{thm:chi,smallt} If $t(n) = o(\log_b np)$, then $(1-o(1))n/(4\log_b np) \le \chic{t}(\G{n}{p}) \le (1+o(1))n/(2\log_b np)$ a.a.s.
\item\label{thm:chi,mediumt} If $t(n) = \Theta(\log_b np)$ and $t(n) = o(n)$, then $\chic{t}(\G{n}{p}) = \Theta\left( n/\log_b np \right) = \Theta\left( n/t \right)$ a.a.s.
\item\label{thm:chi,larget,1} If $t(n) = \omega(\log_b np)$ and $t(n) = o(n)$, then $\chic{t}(\G{n}{p}) \sim n /t$ a.a.s.
\item\label{thm:chi,larget,2} If $t(n) \sim n/x$, where $x > 0$ is fixed and not integral, then $\chic{t}(\G{n}{p}) = \lceil x \rceil$ a.a.s.
\end{enumerate}
\end{theorem}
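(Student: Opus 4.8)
\textbf{Proof proposal for Theorem~\ref{thm:chi}\ref{thm:chi,larget,2}.}
The plan is to prove matching lower and upper bounds, both of which are essentially deterministic once we have the correct structural input from the random graph. Write $x > 0$ fixed and non-integral, so $t = t(n) \sim n/x$ and $\lceil x\rceil$ is well defined with $\lceil x\rceil - 1 < x < \lceil x\rceil$. The upper bound is almost immediate: partition $[n]$ into $\lceil x\rceil$ parts each of size at most $\lceil n/\lceil x\rceil\rceil$. Since $x < \lceil x\rceil$ we have $n/\lceil x\rceil < n/x \sim t$, so for $n$ large each part has at most $t$ vertices; hence it is trivially a $t$-component set (its whole induced subgraph has order at most $t$), and $\chic{t}(\G{n}{p}) \le \lceil x\rceil$ deterministically for $n$ large enough. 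No randomness is needed here.

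For the lower bound, suppose toward a contradiction that $\chic{t}(\G{n}{p}) \le \lceil x\rceil - 1$, i.e.\ there is a $t$-component colouring with at most $\lceil x\rceil - 1$ colour classes. By pigeonhole some colour class $C$ has $|C| \ge n/(\lceil x\rceil - 1)$. Since $x > \lceil x\rceil - 1$, we have $n/(\lceil x\rceil - 1) > n/x \sim t$, so in fact $|C| \ge (1+\delta)t$ for some constant $\delta > 0$ and all large $n$. The claim is then that a.a.s.\ $\G{n}{p}$ is such that \emph{every} vertex subset of order at least $(1+\delta)t \sim (1+\delta)n/x$ — that is, every subset of linear order bounded below by a constant fraction — induces a connected subgraph, or at least one with a component larger than $t$. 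Concretely, it suffices to show that a.a.s.\ $\alphac{t}(\G{n}{p}) < (1+\delta)t$; equivalently, a.a.s.\ no subset $S$ with $|S| = \lceil (1+\delta)t\rceil$ is a $t$-component set. This contradicts the existence of the class $C$ and forces $\chic{t}(\G{n}{p}) \ge \lceil x\rceil$.

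To establish that $\G{n}{p}$ a.a.s.\ has no $t$-component set of order $m := \lceil(1+\delta)t\rceil$ (a linear-sized set), I would use a first-moment computation, similar in spirit to the expectation bounds in Section~\ref{sec:exp} but far cruder since $p$ may depend on $n$ only through $np\to\infty$ and $m$ is linear. A set $S$ of order $m$ fails to be a $t$-component set unless it admits a partition into blocks of size $\le t$ with no edges between blocks; taking the largest such block (of size $\le t$) versus the rest yields a bipartition of $S$ into parts of sizes $a$ and $m-a$ with $\min\{a, m-a\} \ge (m - t)/2 \ge \delta t/2 = \Theta(n)$, and with no edges crossing. Hence
\begin{align*}
\Pr(\alphac{t}(\G{n}{p}) \ge m)
\le \binom{n}{m}\, 2^m\, q^{\,a(m-a)}
\le 2^{2n}\, q^{\,\Theta(n^2)}
= \exp\!\left(O(n) - \Theta(n^2 p)\right) \to 0,
\end{align*}
using $a(m-a) = \Theta(n^2)$, $q^{\Theta(n^2)} = \exp(-\Theta(n^2 p))$ for $q = 1-p$ bounded away from $1$ (and when $p\to 0$ one uses $-\log q \ge p$, so the exponent is $-\Theta(n^2 p) = -\Theta(n \cdot np) \to -\infty$ since $np\to\infty$). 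Thus a.a.s.\ no such $S$ exists, completing the lower bound and the proof. I do not expect any real obstacle here: the non-integrality of $x$ creates the constant gap $\delta$ between $t$ and both $n/x$-neighbouring fractions $n/(\lceil x\rceil)$ and $n/(\lceil x\rceil-1)$, which makes both directions robust; the only mild care needed is handling the full range $np\to\infty$ uniformly in the first-moment bound, which the inequality $-\log q \ge p$ dispatches.
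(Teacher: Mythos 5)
Your proposal treats only part \ref{thm:chi,larget,2} of the theorem, so as a proof of the stated result it is incomplete: parts \ref{thm:chi,smallestt}--\ref{thm:chi,larget,1} are where most of the work lies. In the paper those are handled by combining the trivial bounds of Proposition~\ref{prop:basic} with the known asymptotics of $\chi(\G{n}{p})$ for the upper bounds, and, for the lower bounds, by $\chic{t}(\G{n}{p})\ge n/\alphac{t}(\G{n}{p})$ together with the a.a.s.\ upper bound on $\alphac{t}$ of Proposition~\ref{prop:alpha} (an adaptation of Lemma~\ref{lem:exp,larget,upper}), with $\tau$ near $2$, fixed, or arbitrarily large as the case requires; part \ref{thm:chi,smallestt} additionally invokes Theorem~1.3 of~\cite{KaMc10}. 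None of this machinery is replaced by your argument, which exploits the very special feature of case \ref{thm:chi,larget,2} that all relevant sets have linear order.

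For part \ref{thm:chi,larget,2} itself, your plan is essentially the paper's (deterministic upper bound $\lceil n/t\rceil=\lceil x\rceil$; lower bound via pigeonhole, i.e.\ $\chic{t}\ge n/\alphac{t}$, plus a first-moment bound ruling out $t$-component sets of order $(1+\delta)t$), just cruder, and the crudeness is affordable here. But one step is wrong as written: splitting off the largest block of the component partition against the rest does \emph{not} give $\min\{a,m-a\}\ge (m-t)/2$ --- the largest component could be a single vertex (e.g.\ all blocks singletons). The correct device, and the one the paper uses in Lemma~\ref{lem:exp,larget,upper} and in the proof of Lemma~\ref{lem:chi,medium}, is to group components greedily into one side: since every block has size at most $t$, the partial sums of block sizes hit the window $[(m-t)/2,(m+t)/2]$, yielding an edge-free bipartition with both sides of size at least $(m-t)/2=\delta t/2=\Theta(n)$. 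Because your union bound is already over all $2^m$ bipartitions, the rest of your estimate, $\binom{n}{m}2^m q^{\Theta(n^2)}=\exp(O(n)-\Theta(n\cdot np))\to 0$ using $-\log q\ge p$, then goes through unchanged. Two small further points: treat $0<x<1$ separately (there $\lceil x\rceil=1$, the lower bound is trivial, and $n/(\lceil x\rceil-1)$ is undefined), and for $x>1$ choose $\delta<\min\{x-1,\ x/(\lceil x\rceil-1)-1\}$ so that both $|C|\ge(1+\delta)t$ and $\lceil(1+\delta)t\rceil\le n$ hold for large $n$.
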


Proposition~\ref{prop:exp,mediumt,upper,1} immediately implies the following.

\begin{proposition}\label{prop:alpha}
Suppose $p = p(n)$ satisfies $0 < p < 1$ and $n p \to \infty$ as $n \to \infty$.
If $t(n) \sim \tau\log_b np$ for some $\tau> 2$, then 
$\alphac{t}(\G{n}{p}) \le (\tau+1+1/(\tau-1) + o(1))\log_b np$ a.a.s. 
\end{proposition}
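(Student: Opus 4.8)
\textbf{Proof proposal for Proposition~\ref{prop:alpha}.}
The plan is to deduce this from a first moment bound on $\Exp(|\mathcal{S}_{n,t,k}|)$, exactly as Lemma~\ref{lem:exp,larget,upper} is set up, but with the target cardinality $k$ taken close to $(\tau+1+1/(\tau-1))\log_b np$ rather than left implicit. Concretely, I would fix a small $\delta>0$, set $k = k(n) = \lceil(\tau+1+1/(\tau-1)+\delta)\log_b np\rceil$, and show that for this $k$ we have $\Exp(|\mathcal{S}_{n,t,k}|)\to 0$; then Markov's inequality gives $\Pr(\alphac{t}(\G{n}{p})\ge k)=\Pr(|\mathcal{S}_{n,t,k}|\ge 1)\le\Exp(|\mathcal{S}_{n,t,k}|)\to 0$, and since $\delta$ is arbitrary the a.a.s.\ upper bound $\alphac{t}(\G{n}{p})\le(\tau+1+1/(\tau-1)+o(1))\log_b np$ follows.

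The first step is to verify that this choice of $k$ satisfies the hypotheses of Lemma~\ref{lem:exp,larget,upper} (after possibly shrinking $\delta$). Since $\tau>2$ we have $k/t \to (\tau+1+1/(\tau-1))/\tau$, which lies strictly between $1$ and $2$ because $\tau+1+1/(\tau-1) = \tau^2/(\tau-1)\cdot\bigl(1+o(1)\bigr)$ — more precisely one checks $\tau < \tau+1+1/(\tau-1) < 2\tau$ for $\tau>2$ — so the requirement $t\ge k/2$ holds for large $n$; also $t\sim\tau\log_b np = O(\log_b np)$ and $t,k\to\infty$. The substantive hypothesis to check is the lower bound
\[
k \ge t + \frac{k}{t}\log_b\frac{np}{pt+\log np} + \frac{5}{\log b}.
\]
Here I would use that $pt+\log np$: when $p$ is bounded away from $0$ this is $\Theta(t)=\Theta(\log np)$, and when $p\to 0$ one has $pt = o(\log np)$ so $pt+\log np \sim \log np$; in either case $\log_b\frac{np}{pt+\log np} = (1+o(1))\log_b np$ (absorbing the $\log_b\log np$ or $\log_b(pt+\log np)$ correction, which is lower order). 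Hence the right-hand side is $(1+o(1))\bigl(\tau + \frac{\tau+1+1/(\tau-1)}{\tau}\bigr)\log_b np = (1+o(1))(\tau+1+1/(\tau-1))\log_b np$, using $\tau + (\tau+1+1/(\tau-1))/\tau = \tau + 1 + 1/(\tau-1)$ (this is precisely the fixed point identity $\kappa = \tau + \kappa/\tau$ that defines $\tau+1+1/(\tau-1)$). Thus with $k = (\tau+1+1/(\tau-1)+\delta)\log_b np$ the inequality holds with a strictly positive slack of order $\delta\log_b np$ for $n$ large.

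By Lemma~\ref{lem:exp,larget,upper} this gives $\Exp(|\mathcal{S}_{n,t,k}|)\le\exp(-t)\to 0$, completing the argument. The main obstacle — really the only point requiring care — is the bookkeeping in the hypothesis check above: one must confirm the slack survives uniformly as $p=p(n)$ ranges over all sequences with $0<p<1$ and $np\to\infty$, in particular tracking that $pt$ is genuinely lower order than $\log np$ (which uses $t = O(\log_b np)$ and $np\to\infty$), and that the $\frac{5}{\log b}$ term and the $\log_b\log np$-type corrections are negligible against $\delta\log_b np$. Everything else is a direct invocation of the already-proven lemma, and I would remark explicitly that the value $\tau+1+1/(\tau-1)$ is exactly the threshold at which the first-moment exponent $k\log np - \frac{k}{t}\log pk - t(k-t)\log b$ crosses zero, matching the $\kappa = \tau+\tau/(\tau-1)$ computation quoted after Theorem~\ref{thm:chi,medium}.
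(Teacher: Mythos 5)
Your proposal is correct and is essentially the paper's intended argument: the paper's "proof" is just the remark that Lemma~\ref{lem:exp,larget,upper} can be adapted, and taking $k=\lceil(\tau+1+1/(\tau-1)+\delta)\log_b np\rceil$, verifying the lemma's hypotheses, and applying Markov's inequality is exactly that adaptation (the identity $\kappa=\tau+\kappa/\tau$ for $\kappa=\tau^2/(\tau-1)$ being the reason the slack is positive). One small correction to your bookkeeping: when $p\to 0$ it is \emph{not} true that $pt=o(\log np)$ --- since $\log b\sim p$ one has $pt\sim\tau\log np$ --- but the step survives because all you need is $pt+\log np=O(\log np)$, which follows for every admissible $p$ from $p\le\log b$, so that $\log_b(pt+\log np)=O(\log\log np/\log b)=o(\log_b np)$; likewise, tracking the $\delta$-dependence of $k/t$ gives slack $\delta(1-1/\tau)\log_b np$ rather than $\delta\log_b np$, which is still positive and dominates the $5/\log b$ term since $\log np\to\infty$.
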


Let us see how this upper bound on $\alphac{t}(\G{n}{p})$ is used to obtain Theorem~\ref{thm:chi}.

\begin{proof}[Proof of Theorem~\ref{thm:chi}]
The upper bounds of Theorem~\ref{thm:chi} follow from Proposition~\ref{prop:basic}, and previously mentioned results for $\chi(\G{n}{p})$.  For the lower bounds, we use that $\chic{t}(\G{n}{p}) \ge n/\alphac{t}(\G{n}{p})$, and apply Proposition~\ref{prop:alpha} with $\tau$ arbitrarily close to $2$ for~\ref{thm:chi,smallt}, $\tau$ fixed for~\ref{thm:chi,mediumt}, or $\tau$ arbitrarily large for~\ref{thm:chi,larget,1} and~\ref{thm:chi,larget,2}.
The case~\ref{thm:chi,smallestt} is implied by Theorem~1.3 of~\cite{KaMc10}.
\end{proof}

Note that in the setting of Theorem~1.3 of~\cite{KaMc10}, i.e.~colourings with bounded monochromatic average degree, the analogous threshold is $t = \Theta(\log np)$ which is asymptotically smaller than the $t = \Theta(p^{-1}\log np)$ threshold implicit in Theorem~\ref{thm:chi}.
We remark that Lemma~\ref{lem:exp,smallt,upper,2} does not suffice to completely narrow the gap in Theorem~\ref{thm:chi}\ref{thm:chi,smallt}.
Moreover, in the intermediate case~\ref{thm:chi,mediumt}, one might expect an analogue of Theorem~\ref{thm:chi,medium} to hold.
However, we leave these two problems to future study.

\section{Component Ramsey numbers}\label{sec:ramsey}

In this section, we consider the Ramsey-type numbers based on bounded sized components.
The next proof closely follows~\cite{Erd47}. A constant-factor improvement would be available here using the Lov\'asz Local Lemma, as in~\cite{Spe77}, but we expect that further improvements would be much more difficult to obtain.

\begin{proof}[Proof of Proposition~\ref{prop:ramseylower}]
For any $\delta>0$ and some large enough integer $k$, let
\begin{align*}
n = \left\lfloor \frac{1}{1+\delta}\frac{k}{3e}2^{\eps(1-\eps)k}\right\rfloor.
\end{align*}
Let $G$ be distributed as $\G{n}{1/2}$.
Given a subset $S\subseteq [n]$ of $k$ vertices, let $A_S$ be the event that $S$ is a $\lfloor(1-\eps)k\rfloor$-component set in $G$ or its complement.
By exactly the same arguments used to obtain~\eqref{eqn:star} in Lemma~\ref{lem:exp,larget,upper} (with $t=\lfloor(1-\eps)k\rfloor$), since $\eps < 1/2$, we see that
\begin{align*}\label{eqn:exp,larget,upper}
\Pr(A_S)
 \le 2\cdot 3^k \cdot 2^{-\eps(1-\eps)k^2}.
\end{align*}
So the probability that $A_S$ holds for some $S$ is at most
\begin{align*}
\sum_{S\subseteq [n],|S|= k}\Pr(A_S)
& \le 2 \binom{n}{k} 3^k \cdot 2^{-\eps(1-\eps)k^2}
\le 2 \left(\frac{en\cdot 3}{k} \cdot 2^{-\eps(1-\eps)k} \right)^k
 \le 2 (1+\delta)^{-k} < 1.
\end{align*}
Thus, for $k$ large enough, there exists a graph on $n$ vertices in which no $k$-subset is a $\lfloor(1-\eps)k\rfloor$-component set in the graph or its complement. We proved this for all $\delta>0$, so the result follows.
\end{proof}

We contrast Proposition~\ref{prop:ramseylower} with upper bounds of the following form. The first of these compares with Proposition~\ref{prop:ramseylower} when $\eps$ is near $1/2$, while the second of these when $\eps$ is near $0$. Both show that there is limited room for improvement in Proposition~\ref{prop:ramseylower}.
\begin{proposition}
As $k\to\infty$,
\begin{align*}
R^{\frac12(k+\log_2k-1)}(k) \le (1+o(1)) \sqrt{k}2^{\frac12(k-1)}.
\end{align*}
Fix $0 < c < 1$. Then, as $k\to\infty$,
\begin{align*}
R^{k-\frac{c}{1-c}\log_2k+1}(k) \le (1+o(1))k^{1/(1-c)}.
\end{align*}
\end{proposition}

\begin{proof}
These bounds follow directly from a K\H{o}v\'ari--S\'os--Tur\'an result, Lemma~2 in~\cite{CFS11}, which guarantees complete bipartite subgraphs in dense graphs. Specifically, the lemma states, ``If a graph on $n$ vertices has $\epsilon n^2$ edges and $t < \epsilon n$, then it contains the complete bipartite graph $K_{s,t}$ with $s = \epsilon^tn$.'' Note that complete bipartite graphs and their induced subgraphs have bounded components in the complement.  For the first bound, we apply the lemma, either to a given graph on $n$ vertices or to its complement, with $\epsilon =1/2$ and $t=\log_2n-\log_2\log_2n$ to obtain $K_{\log_2 n, \log_2n-\log_2\log_2n}$. For the second we use $\epsilon = 1/2$ and $t = c\log_2 n$ to obtain $K_{c\log_2n,n^{1-c}}$.
\end{proof}

\subsection*{Acknowledgements}

We thank Guus Regts and Jean-S\'ebastien Sereni for insightful discussions about Section~\ref{sec:ramsey}.

\bibliographystyle{abbrv}
\bibliography{compchi}

\appendix

\section{Proofs of auxiliary technical results}

\begin{proof}[Proof of Lemma~\ref{lem:iota}]
To show that the function $\kappa$ is well-defined, fix $\tau$, $\kappa>0$ satisfying $\iota(\tau,\kappa)=0$ and write $\lfloor \kappa / \tau \rfloor = i$. Then the implicit equation is equivalent to
\[\kappa = \frac{\tau^2 i (i+1)}{2i \tau -2}.\]
Note that if $2i\tau-2=0$, then for $\iota(\tau,\kappa)=0$ to hold it must be that $(\kappa-1)(\kappa-1-\tau)=\kappa(\kappa-\tau-2)$ and so $\tau=-1$, contradicting our assumption on $\tau$.
Now, for $\lfloor \kappa/\tau \rfloor =i$ to hold, we must also have 
\begin{equation}
0\le \frac {\tau i(i+1)}{2i\tau -2} - i <1.
\end{equation}
It follows from this that $i\in (\frac 2 \tau, 1 + \frac 2 \tau ]$. There is precisely one integer in this interval, and so at most one solution to  $\iota(\tau,\kappa)=0$. One also verifies easily, by taking the above expression for $\kappa$ and $i=\lfloor 1 +\frac 2 \tau \rfloor$, that $\iota(\tau,\kappa)=0$ is indeed satisfied, and so there is exactly one solution. We conclude that $\kappa$ is defined by
\begin{equation}\label{eq:kappa_value}
	\kappa = \frac {\tau^2 i (i+1)}{2i\tau -2} \qquad \text {where } \qquad i = \left\lfloor 1 + \frac 2 \tau \right\rfloor.
\end{equation}

\begin{itemize}
\item[\ref{itm:iota,c}]
On each interval of the form $[\frac 2{j},\frac 2 {j-1})$, over which $i$ is invariant (and equals $j$), it is routine to check that $\kappa$ is a positive, continuous, increasing, convex function of $\tau$. The continuity on $(0,\infty)$ follows from the fact that, for every $i\ge 1$, 
\[\lim_{\tau \uparrow 2/i} \frac{\tau^2 (i+1)(i+2)}{2(i+1)\tau -2} = 2 + \frac 2 i = \lim_{\tau \downarrow 2/i} \frac{\tau^2 i (i+1)}{2i\tau -2}.\]
\item[\ref{itm:iota,d}]
 The second part follows readily from the formula for $\kappa$ in~\eqref{eq:kappa_value} together with part~\ref{itm:iota,c}. \qedhere
\end{itemize}
\end{proof}

\begin{proof}[Proof of Lemma~\ref{lem:careful}]
First note, for parts~\ref{prop:careful,minus} and~\ref{prop:careful,notdivide}, that it is routine to check that $\tau\left\lfloor \frac{\kappa}{\tau}\right\rfloor-1 > 1$.
\begin{enumerate}
\item
In this case, observe that
$\left\lfloor \frac{\kappa+\eps}{\tau}\right\rfloor = \left\lfloor\frac{\kappa}{\tau}\right\rfloor$. Using $\iota(\tau,\kappa)=0$, we write
\begin{align*}
2\iota(\tau,\kappa+\eps)
& = 0 + \eps\left(\kappa-\tau\left\lfloor\frac{\kappa}{\tau}\right\rfloor-\tau\right) + \eps\left(\kappa-\tau\left\lfloor \frac{\kappa}{\tau}\right\rfloor\right) + \eps^2
-\eps(\kappa-\tau-2)-\eps\kappa-\eps^2\\
& = -\eps\left(2\tau\left\lfloor \frac{\kappa}{\tau}\right\rfloor-2\right).
\end{align*}
\item
First observe that in this case
$\left\lfloor \frac{\kappa-\eps}{\tau}\right\rfloor = \left\lfloor\frac{\kappa}{\tau}\right\rfloor - 1 = \frac{\kappa}{\tau} - 1$.
Using this and the assumption $\iota(\tau,\kappa)=0$, we can write
$0=2\iota(\tau,\kappa) =-\kappa(\kappa-\tau-2)$ and
\begin{align*}
2\iota(\tau,\kappa-\eps)
& = (\tau-\eps)(-\eps)-(\kappa-\eps)(\kappa-\eps-\tau-2)\\
& = -\eps(\tau-\eps) +0+\eps(\kappa-\tau-2)+\eps k-\eps^2
 = 2\eps(\kappa-\tau-1).
\end{align*}
The equality follows from checking that $\kappa = \tau+2$ if $\tau | \kappa$.
\item
Observe in this case that
$\left\lfloor \frac{\kappa-\eps}{\tau}\right\rfloor = \left\lfloor\frac{\kappa}{\tau}\right\rfloor$. Using $\iota(\tau,\kappa)=0$, we write
\begin{align*}
2\iota(\tau,\kappa-\eps)
& = 0 - \eps\left(\kappa-\tau\left\lfloor\frac{\kappa}{\tau}\right\rfloor-\tau\right) - \eps\left(\kappa-\tau\left\lfloor \frac{\kappa}{\tau}\right\rfloor\right) + \eps^2
+\eps(\kappa-\tau-2)+\eps\kappa-\eps^2\\
& = \eps\left(2\tau\left\lfloor \frac{\kappa}{\tau}\right\rfloor-2\right). \qedhere
\end{align*}
\end{enumerate}
\end{proof}

\begin{proof}[Proof of Proposition~\ref{prop:SPtk}]
Recall that $\mathcal{SP}_{t,k}$ is the number of set partitions of $[k]$ with blocks of size at most $t$.
Following Note~VIII.12 of~\cite{FlSe09}, observe that $\mathcal{SP}_{t,k}$ is bounded by the product of $k!$ and the $z^k$ coefficients of the following exponential generating function:
\begin{align*}
SP_t(z) \equiv \exp\left(\sum_{i=1}^t \frac{z^i}{i!}\right).
\end{align*}
We have as $k\to \infty$ (cf.~Flajolet and Sedgewick~\cite[Corollary~VIII.2]{FlSe09})
\begin{align*}
[z^k]SP_t(z) \sim \frac{1}{\sqrt{2\pi\lambda}}\frac{SP_t(r)}{r^k}
\qquad\text{where}\qquad
\lambda = \left(r\frac{d}{dr}\right)^2 \sum_{i=1}^t \frac{r^i}{i!}
\end{align*}
and $r$ is given implicitly by the saddle-point equation
\begin{align*}
r\frac{d}{dr}\sum_{i=1}^t \frac{r^i}{i!} = k.
\end{align*}

We need to perform a few routine estimates.
First, we obviously have
\begin{align*}
\lambda
& = r^2 \frac{d^2}{dr^2}\left(\sum_{i=1}^t\frac{r^i}{i!}\right) + r \frac{d}{dr}\left(\sum_{i=1}^t\frac{r^i}{i!}\right)
 \ge k.
\end{align*}
Next, the implicit formula for $r$ is
\begin{align*}
k
& = r \sum_{i=1}^t\frac{r^{i-1}}{(i-1)!}
= \sum_{i=0}^{t-1}\frac{r^{i+1}}{i!}.
\end{align*}
So clearly
\begin{align*}
\frac{r^t}{(t-1)!} \le k \le e^k
\qquad\text{ and }\qquad
\log k \le r \le k^{1/t} ((t-1)!)^{1/t}.
\end{align*}
Now, since $t \le \log k \le r$, we see that the maximum of $r^{i+1}/i!$ in the range $i \in \{0,\dots,t-1\}$ is at $i = t-1$.  Thus we have
\begin{align*}
k \le \frac{r^t}{(t-2)!}
\qquad\text{ and }\qquad
k^{1/t} ((t-2)!)^{1/t} \le r.
\end{align*}
Therefore, we also obtain, using Stirling's approximation,
\begin{align*}
r^k
& \ge k^{k/t} ((t-2)!)^{k/t} \ge \exp\left(\frac{k}{t} \log k + k\left(\frac{t-2}{t}\log(t-2) -1\right)\right)\\
& \ge \exp\left(\frac{k}{t} \log k + k\log t - 1.9k\right).
\end{align*}
Furthermore,
\begin{align*}
SP_t(r)
& = \exp\left(\sum_{i=1}^t \frac{r^i}{i!}\right)
 \le \exp\left(\sum_{i=1}^t \frac{r^i}{(i-1)!}\right)
 = e^k
\end{align*}
Substituting these inequalities, we obtain
\begin{align*}
[z^k]SP_t(z)
& \le (1 + o(1))\frac{1}{\sqrt{2\pi k}}\exp\left(k -\frac{k}{t} \log k - k\log t + 1.9k\right).
\end{align*}
The result follows from an application of Stirling's approximation to $k!$ 
and a choice of $k$ large enough.
\end{proof}

\end{document}